\newtheorem{theorem}{Theorem}[section]
\newtheorem{lemma}{Lemma}[section]
\newtheorem{definition}{Definition}[section]
\newtheorem{example}{Example}[section]
\newtheorem{remark}{Remark}[section]
\newtheorem{assumption}{Assumption}[section]
\newcommand{\citeposs}[1]{\citeauthor{#1}'s \citeyearpar{#1}}
\author{Hiroaki Kaido\\
Boston University\thanks{The author acknowledges excellent research assistance from Michael Gechter. The author also thanks  Iv\'{a}n Fern\'{a}ndez-Val, Francesca Molinari, Pierre Perron, Zhongjun Qu, Andres Santos, J\"{o}rg Stoye, and seminar participants at BU, Cornell, Yale and participants of BU-BC Joint Econometrics Conference, the CEMMAP Workshop (New Developments in the Use of Random Sets in Economics), and the 2012 International Symposium on Econometric Theory and Applications (SETA) for helpful comments. Financial support from the NSF Grant SES-1230071 is gratefully acknowledged.}}
\title{Asymptotically Efficient Estimation of Weighted Average Derivatives with an Interval Censored Variable}
\date{December, 2013}
\begin{document}
	\maketitle

\begin{abstract}
This paper studies the identification and estimation of weighted average derivatives of conditional location functionals including conditional mean and conditional quantiles in settings where either the outcome variable or a regressor is interval-valued.
Building on  \cite{Manski_Tamer2002aE} who study nonparametric bounds for mean regression with interval data, we characterize the identified set of  weighted average derivatives of   regression functions. Since the weighted average derivatives do not rely on parametric specifications for the regression functions, the identified set is well-defined without any parametric assumptions.
Under general conditions, the identified set is compact and convex and hence admits  characterization by its support function.   
Using this characterization, we derive the semiparametric efficiency bound of the support function when the outcome variable is interval-valued. We illustrate efficient estimation by constructing an efficient estimator of the support function for the case of  mean regression with an interval censored outcome.
\end{abstract}	

\bigskip
\begin{center}
\textbf{Keywords}: Partial Identification, Weighted Average Derivative, Semiparametric Efficiency, Support Function, Interval Data
\end{center}	
	
\clearpage
\section{Introduction}
Interval censoring commonly occurs in various  economic data used in empirical studies. The Health and Retirement Study (HRS), for example, offers wealth brackets to respondents if they are not willing to provide point values for different components of wealth. In  real estate data, locations of houses are often recorded by zip codes, which makes the distance between any two locations  interval-valued. 
Analyzing regression models with such interval-valued data poses a challenge as the regression function is not generally point identified. 
This paper studies  the identification and estimation of weighted average derivatives of general regression functions when data include an interval-valued variable.

Let $Y\in\mathcal Y\subseteq \mathbb R$ denote an outcome variable and let  $Z\in\mathcal Z\subseteq\mathbb R^\ell$ be a vector of covariates.
The researcher's interest is often in the regression function defined by
\begin{align}
m(z)\equiv \text{argmin}_{\tilde m}E[\varrho(Y-\tilde m)|Z=z]~,\label{eq:reg}
\end{align}
for some loss function  $\varrho:\mathbb R\to\mathbb R$. 
For example, $m$ is the conditional mean function of $Y$ given $Z$ when $\varrho$ is the square loss, i.e., $\varrho(\epsilon)=\epsilon^2/2$, while $m$ is the the conditional quantile function when $\varrho(\epsilon)=\epsilon(\alpha-1\{\epsilon\le  0\})$. 
Our focus is on estimating the identified features of $m$ when either the outcome variable or one of the covariates is interval-valued. A variable of interest is \textit{interval-valued} when  the researcher does not observe the variable of interest $W$ but observes a pair $(W_L,W_U)$ of random variables such that
\begin{align}
W_L\le W\le W_U,~~\text{with probability 1.}\label{eq:interval}
\end{align}
In the presence of an interval-valued variable, data in general do not provide information sufficient for identifying $m$. Yet, they may provide informative bounds on $m$. In their pioneering work, \cite{Manski_Tamer2002aE} derive sharp nonparametric bounds on the conditional mean function when either an outcome or a regressor is interval-valued.
Suppose for example that the outcome variable $Y$ is interval-valued. Letting $m_L$ and $m_U$ denote the solutions to \eqref{eq:reg} with $Y_L$ and $Y_U$ in place of $Y$ respectively and letting $\varrho(\epsilon)=\epsilon^2/2$,  the bounds of \cite{Manski_Tamer2002aE} are given by
\begin{align}
m_{L}(Z)\le m(Z)\le m_{U}(Z),~ \text{with probability 1}.\label{eq:bounds}
\end{align}
 When $Y$ is observed  and a component $V$ of the vector of covariates $(Z',V)$ is interval-valued, similar nonparametric bounds can be obtained when the researcher can assume that the regression function  is weakly monotonic in $V$.

Recent developments in the partial identification literature  allow us to conduct inference for the identified features of the regression function when inequality restrictions such as \eqref{eq:bounds} are available.  For example, when the functional form of $m$ is known up to a finite dimensional parameter, one may construct a confidence set that covers either the identified set of parameters or points inside it with a prescribed probability: \citep{ChernozhukovHongTamer2007E,AndrewsShi2013E}. One may also conduct inference for the coefficients of the best linear approximation to the regression function \citep{BeresteanuMolinari2008E,ChandrasekharChernozhukovMolinari2011}.
This paper contributes to the literature by studying the estimation of another useful feature of the regression function: the weighted average derivative.

A motivation for studying the weighted average derivative is as follows.
A common way to make inference for $m$ is to  specify its functional form.
For example, one may assume $m(z)=g(z;\gamma_0)$ for some $\gamma_0$, where $g$ is a function known up to a finite dimensional parameter $\gamma$. The identified set   for $\gamma_0$ is then defined as the set of $\gamma$'s that satisfy the inequality restrictions: $m_L(Z)\le g(Z;\gamma)\le m_U(Z)$ with probability 1.
Existing estimation and inference methods for partially identified models  can be employed to construct  confidence sets for $\gamma_0$ or its identified region.
However,  such inference may be invalid if $g$ is misspecified, a point raised by \cite{PonomarevaTamer2011EJ}. 
In contrast, the weighted average derivative is well-defined without functional form assumptions.\footnote{Another parameter that is also robust to misspecification is the coefficients in the best linear approximation to $m$. Inference methods for this parameter are studied  in \cite{BeresteanuMolinari2008E}, \cite{PonomarevaTamer2011EJ}, and \cite{ChandrasekharChernozhukovMolinari2011}.}   Suppose $m$ is differentiable with respect to $z$ $a.e$.  Letting $w:\mathcal Z\to \mathbb R_+$ be a weight function,  the \textit{weighted average derivative} of $m$ is defined by
\begin{align}
	\theta\equiv E[w(Z)\nabla_zm(Z)].\label{eq:weightedAD1}
\end{align}	
\cite{Stoker1986EJES} first analyzed estimation of this parameter. It has also been 
studied in a variety of empirical studies, including \cite{HardleHildenbrandJerison1991E,DeatonNg1998JASA}, \cite{CoppejansSieg2005JBES}, \cite{CarneiroHeckmanVytlacil2010E}, and \cite{CrossleyPendakur2010JBES}.
This parameter allows a simple interpretation: the weighted average of marginal impacts of $Z$ on specific features (e.g., conditional quantiles) of the distribution of $Y$.  Further, under suitable assumptions on the data generating process, it can also serve as a structural parameter associated with the function of interest. For example, if  $Y$ is generated as $Y=G(Z)+\epsilon$ with $G$ being a structural function and  $\epsilon$ being mean independent of $Z$, the average derivative of the conditional mean summarizes the slope of the structural function $G$.

In the presence of interval-valued data, the weighted average derivative  is generally set identified.  This paper's first contribution is to characterize the identified set, the set  of weighted average derivatives compatible with the distribution of the observed variables. 
Specifically, we show that the identified set is compact and convex under mild assumptions. This  allows us to represent the identified  set by its \textit{support function}, a unique function on the unit sphere that characterizes the location of hyperplanes tangent to the identified set.  Support functions have recently been used  for making inference for various economic models that involve convex identified sets or have convex predictions \citep[See][]{BeresteanuMolinari2008E,Kaido2010aWP,BeresteanuMolchanovMolinari2011E, ChandrasekharChernozhukovMolinari2011,BontempsMagnacMaurin2012E}. Building on the aforementioned studies, we derive a closed form formula for the support function, which in turn gives an explicit characterization of extreme points of the identified set.
This characterization also gives closed-form bounds on the weighted average derivative with respect to each covariate.

This paper's second  contribution is to characterize the  semiparametric efficiency bound for estimating the identified set when the outcome variable is interval-valued. 
A key insight here is that the support function allows us to interpret the identified set as a parameter taking values in a normed vector space.
In recent work, using the theory of semiparametric efficiency for infinite dimensional parameters, \cite{KaidoSantos2011} characterize the semiparametric efficiency bound for estimating parameter sets defined by convex moment inequalities. Applying their framework, we  characterize the semiparametric efficiency bound for the support function of the identified set of the weighted average derivatives.  
Using  mean regression as an example, we further illustrate efficient estimation by showing that an estimator of the identified set for the density weighted average derivative, which builds on \cite{PowellStockStoker1989EJES}, is semiparametrically efficient.
When the interval censoring occurs on a covariate,
the nonparametric bounds on the regression function take the form of intersection bounds. We show that the support function of the identified set also depends on these bounds.
 As pointed out by \cite{Hirano_Porter2009aE}, intersection  bounds are not generally pathwise differentiable, which implies that the identified set does not generally admit regular estimation when a covariate is interval-valued. 
We then discuss a possibility of regular estimation of the support function of another parameter set, which conservatively approximates the true identified set.

This paper is related to the broad literature on semiparametric estimation of weighted average derivatives. For the mean regression function,
 \cite{Stoker1986EJES} and \cite{HardleStoker1989JASA} study estimation of unweighted average derivatives, while 
\cite{PowellStockStoker1989EJES} study estimation of the density weighted average derivative. \cite{ChaudhuriDoksumSamarov1997AS} study the weighted average derivative of the quantile regression function. Semiparametric efficiency bounds are shown to exist in these settings. See for example, \cite{Samarov1991NFERT},  \cite{ChaudhuriDoksumSamarov1997AS}, and \cite{SeveriniTripathi2001JE}.
This paper's efficiency results build on \cite{NeweyStoker1993EJES}, who characterize the efficiency bound for the average derivative of general regression functions that are defined through minimizations of various loss functions.  
 The optimal bandwidth for point identified average derivatives is studied in \cite{HardleTsybakov1993JE} and \cite{PowellStoker1996JE}.
Other work on semiparametric inference on average derivatives include
 \cite{NishiyamaRobinson2000E,NishiyamaRobinson2005E} who study the higher-order properties of \citeposs{PowellStockStoker1989EJES} estimator through Edgeworth expansions and \cite{Cattaneo2010SA,CattaneoCrumpJansson2013ET} who study small bandwidth asymptotics for the estimation of density weighted average derivatives and a robust data driven selector of bandwidth.

The rest of the paper is organized as follows. Section \ref{sec:setup} presents the model and characterizes the identified sets. Section \ref{sec:effbound} gives our main results on the efficiency bounds. Section \ref{sec:est} constructs an efficient estimator of $\Theta_0(P)$ for the mean regression example. We examine the finite sample performance of the estimator in Section \ref{sec:mc} and conclude in Section \ref{sec:conclusion}.

\section{General Setting}\label{sec:setup}
Throughout, we let $X\in\mathcal X\subset \mathbb R^{d_X}$ denote the vector of observables that follows a distribution $P$. 
We assume that the observable covariates $Z\in\mathcal Z\subseteq\mathbb R^\ell$ are  continuously distributed and let $f$ denote the probability density function of $Z$ with respect to Lebesgue measure. Suppose that $w(z)f(z)$ vanishes on the boundary of $\mathcal Z$.
By integration by parts, Eq. \eqref{eq:weightedAD1} can be equivalently written as
\begin{align}
	\theta=\int m(z)l(z)dP(x)~,~~l(z)\equiv -\nabla_zw(z)-w(z)\nabla_zf(z)/f(z).\label{eq:intparts}
\end{align}
This suggests that the weighted average derivative is a bounded (continuous) linear function of $m$ under mild moment conditions on $l$. Hence, bounds on $m$  can provide informative bounds on $\theta$. This  observation is especially useful when no \textit{a priori} bounds on $\nabla_z m$ are available. \footnote{On the other hand, the bounds on $m$ do not generally provide useful bounds on its derivative $\nabla_z m(z)$ evaluated at a point $z$.}

\subsection{Motivating examples}
To fix ideas, we briefly discuss examples of regression problems with interval censoring. The first example is based on  nonparametric demand analysis \citep[See e.g.][]{DeatonNg1998JASA}.
\begin{example}\rm
	Let $Y$ be expenditure on the good of interest. Let $Z$ be a vector of prices of $\ell$ goods. In survey data, expenditures may be reported as brackets, making $Y$  interval-valued. 
	A key element in the analysis  of demand is the effect of a marginal change in the price vector $Z$  on expenditure $Y$. For example, consider the conditional mean $m(z)\equiv E[Y|Z=z]$ of the demand.  The (weighted) average marginal impact of price changes  is then measured by  $\theta\equiv E[w(Z)\nabla_z m(Z)]$. Similarly, one may also study the average marginal impact of price changes on the conditional median or other conditional quantiles of the demand.
\end{example}

The second example is estimation of a hedonic price model using quantile regression. 
\begin{example}\rm\label{ex:house}
 Let $Y$ be the price of a house and $Z$ be a $\ell$-dimensional  vector of  house  characteristics. Let  $V$ be the distance between a house and another location relevant for the home value (e.g. a school or a factory causing air pollution).  If data  only record locations by  zip codes,  one may only obtain an interval-valued measurement $[V_L,V_U]$ of the distance, where $V_L$ and $V_U$ are the minimum and maximum distances between  two locations. 
The researcher's interest may be in the upper tail of the house price, in particular in  the weighted average effect  of the $j$-th house characteristic (e.g. square footage) on a specific quantile. Here, the weight function can be chosen so that it puts higher weights on the houses that have specific characteristics the researcher considers relevant.
The weighted average effect can be measured by $\theta^{(j)}$, the $j$-th coordinate of $\theta\equiv E[w(Z)\nabla_z g(Z,v)]$.
\end{example}

\subsection{Identification when $Y$ is interval-valued}
Suppose $Y$ is interval-valued. Throughout this section, we let $x\equiv(y_L,y_U,z')'.$
Given \eqref{eq:intparts}, the sharp identified set for $\theta$ is given by
\begin{align}
\Theta_0(P)\equiv \big\{\theta\in\Theta:\theta=\int m(z)l(z)dP(x),~m \text{ satisfies } \eqref{eq:reg} \text{ with } Y_L\le Y\le Y_U, P-a.s. \big\}.	\label{eq:idset}
\end{align}

In order to characterize the identified set, we make the following assumptions on the covariates $Z$, loss function $\varrho$, and weight function $w$.
	\begin{assumption}\label{as:onY}
	(i)  The distribution of $Z$ and $l(Z)$ are absolutely continuous with respect to the Lebesgue measure on $\mathbb R^\ell$. $\mathcal Z$ is compact and convex with nonempty interior;  (ii) $\varrho$ is convex and satisfies $\varrho(0)=0$, $\varrho(\epsilon)\ge 0, \forall\epsilon$, and $\varrho(\epsilon)\to\infty$ as $|\epsilon|\to\infty$. A measurable function $q(\epsilon)\equiv d\varrho(\epsilon)/d\epsilon$ exists $a.e.$, and $q$ is bounded and continuous $a.e.$ 
	\end{assumption}

	\begin{assumption}\label{as:onw}
 (i) $w(z)$ and $\nabla_z w(z)$ are bounded and continuous on $\mathcal Z$.
	\end{assumption}

As is standard in the literature,	Assumption \ref{as:onY} (i) requires $Z$ to be a continuous random vector, where no component of $Z$ can be functionally determined by other components of $Z$. We also assume that this holds for $l(Z)$.  
	Assumption \ref{as:onY} (ii) imposes regularity condition on the derivative $q$ of the loss function, which may require compactness of the supports of $\epsilon_j=Y_j-m_j(Z), j=L,U$  for some specification of $\varrho$ (e.g. square loss) while it
	is trivially satisfied for the conditional quantile, where $q(\epsilon)=\alpha-1\{\epsilon\le 0\}.$	
We further add the following assumption on $P$.

	\begin{assumption}\label{as:onP1}
			(i) There is a compact set $D\subset\mathbb R$ containing the support of $Y_j$  in its interior for $j=L,U$; (ii)  $w(z)f(z)=0$ on the boundary $\partial\mathcal Z$ of $\mathcal Z$, $\nabla_z f(z)/f(z)$ is continuous, and $E[\|l(Z)\|^{2}]<\infty$;
(iii) For each $z\in\mathcal Z$, $z\mapsto E[q(Y_j - \tilde m)|Z=z] = 0$ has a unique solution at $\tilde m=m_j(z)\in D$ for $j=L,U$;	  (iv) 	 	$m_L,m_U$ are continuously differentiable $a.e.$  with bounded derivatives.
	\end{assumption}
 
Assumption \ref{as:onP1} (ii) is a key condition, which together with other conditions, allows us to write the average derivative as in \eqref{eq:intparts}. This, in turn, enables us to obtain  bounds on $\theta$ from the nonparametric bounds on $m$. Assumption \ref{as:onP1} (ii)-(iii) then impose regularity conditions on the density function $f$ and require that the regression functions $m_j,j=L,U$ are well defined. When Assumption \ref{as:onP1} (iv) holds, we can use it to ensure the sharpness of the identified set.

Under Assumptions \ref{as:onY}-\ref{as:onP1},  $\Theta_0(P)$ is  a compact convex set. Hence, it can be uniquely characterized by its support function.
Let $\mathbb S^\ell=\{p\in \mathbb R^\ell:\|p\|=1\}$ denote the unit sphere in $\mathbb R^\ell$. For a bounded convex set $F$, the \emph{support function} of $F$ is defined by
\begin{align}
 \upsilon(p,F)\equiv\sup_{x\in F}\langle p,x\rangle.	\label{eq:suppdef}
\end{align}
Theorem \ref{thm:thetabounds} is our first main result, which characterizes the identified set through its support function. Let $\Gamma:\mathbb R^3\to\mathbb R$ be defined pointwise by $\Gamma(w_1,w_2,w_3)=1\{w_3\le 0\}w_1+1\{w_3>0\}w_2.$
	\begin{theorem}\label{thm:thetabounds}
			Suppose Assumptions \ref{as:onY}, \ref{as:onw}, and \ref{as:onP1} (i)-(iii)  hold.
	Suppose further that for each $z\in\mathcal Z$, $E[q(Y-\tilde m)|Z=z]=0$ has a unique solution at $m(z)\in D$, and $m$ is differentiable $a.e.$ with a bounded derivative.	 Then, (a) $\Theta_0(P)$ is compact and strictly convex; (b) the support function of $\Theta_0(P)$ is  given pointwise by:
		\begin{align}
	\upsilon(p,\Theta_0(P))&=\int m_p(z)p'l(z)dP(x),
			\label{eq:suppfunc}
		\end{align}
		where $m_p(z)=\Gamma(m_L(z),m_U(z),p'l(z))= 1\{p'l(z)\le 0\}m_L(z)+1\{p'l(z)>0\}m_U(z)$;
(c) If Assumption \ref{as:onP1} (iv) holds, additionally, $\Theta_0(P)$ is sharp. 
	\end{theorem}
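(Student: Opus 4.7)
The plan is to represent $\Theta_0(P)$ through the bounded linear map $L: m \mapsto \int m(z)\, l(z)\, dP(x)$ acting on a convex set of admissible regression functions. Introduce the envelope $\mathcal M_{LU}$ of measurable functions $m$ satisfying $m_L(z) \le m(z) \le m_U(z)$ $P$-a.s. The Manski-Tamer bounds establish that any $m$ arising as \eqref{eq:reg} for some admissible $Y \in [Y_L, Y_U]$ lies in $\mathcal M_{LU}$, so $\Theta_0(P) \subseteq L(\mathcal M_{LU})$. For (a), convexity of $\Theta_0(P)$ follows by mixing the distributions of admissible $Y$'s; boundedness of $L(\mathcal M_{LU})$ uses that $D$ is compact (so $m$ is uniformly bounded) and $E[\|l(Z)\|] < \infty$ by Cauchy-Schwarz and Assumption \ref{as:onP1}(ii), while closedness follows from weak $L^2(dP)$-compactness of $\mathcal M_{LU}$ (convex and norm-bounded) combined with weak continuity of $L$.

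Part (b) is a pointwise linear program. For each $p \in \mathbb S^\ell$,
\begin{equation*}
\sup_{m \in \mathcal M_{LU}} \int m(z)\, p'l(z)\, dP(x)
\end{equation*}
is maximized by choosing $m(z) = m_U(z)$ where $p'l(z) > 0$ and $m(z) = m_L(z)$ where $p'l(z) < 0$, which is exactly $m_p(z)$. The null event $\{p'l(Z) = 0\}$ has probability zero because the distribution of $l(Z)$ is absolutely continuous by Assumption \ref{as:onP1}(i), so $m_p$ is determined $P$-a.s.\ and \eqref{eq:suppfunc} follows. Strict convexity in (a) is then a direct consequence: if some other $m \in \mathcal M_{LU}$ also achieves the supremum, then $(m - m_p)\, p'l(z) \le 0$ pointwise forces $m = m_p$ on $\{p'l \neq 0\}$, giving $L(m) = L(m_p)$; hence the support function is attained at a unique point in every direction $p$.

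For (c), sharpness amounts to the reverse inclusion $L(\mathcal M_{LU}) \subseteq \Theta_0(P)$ once Assumption \ref{as:onP1}(iv) is available. Given $m \in \mathcal M_{LU}$ with bounded derivative, I would construct an admissible $Y$ of the Manski-Tamer form $Y = \lambda(Z)\, Y_L + (1-\lambda(Z))\, Y_U$, where $\lambda: \mathcal Z \to [0,1]$ is a measurable selector solving
\begin{equation*}
F(z, \lambda(z)) \equiv E[q(\lambda Y_L + (1-\lambda) Y_U - m(z)) \mid Z=z] = 0.
\end{equation*}
Convexity of $\varrho$ (monotonicity of $q$) combined with $m_L \le m \le m_U$ yields $F(z, 0) \ge 0 \ge F(z, 1)$; continuity of $F(z,\cdot)$ in $\lambda$ (by dominated convergence and Assumption \ref{as:onY}(ii)) supplies a root via the intermediate value theorem; a measurable selection theorem produces a $P$-measurable $\lambda$; and the uniqueness hypothesis on the first-order condition identifies $m$ as the regression function of the resulting $Y$. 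The main obstacle is precisely this construction: the measurable selection is delicate when $q$ has discontinuities (as in the quantile loss), and one must verify that the uniqueness condition on \eqref{eq:reg} transfers from $m_L, m_U$ to the arbitrary $m$ being realized, which is where the regularity in (iv) is used.
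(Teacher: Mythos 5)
Your parts (a) and (b) are correct and, at bottom, the same argument as the paper's: reduce everything to the image of the order interval $\mathcal M_{LU}=\{m: m_L\le m\le m_U \; P\text{-a.s.}\}$ under the linear map $L(m)=E[m(Z)l(Z)]$ (the forward inclusion coming from the first-order stochastic dominance of $Y_U$ over $Y$ over $Y_L$ and monotonicity of $q$), and then solve the pointwise linear program to get $m_p$. Two of your technical devices differ from the paper's and are, if anything, cleaner: you get closedness from weak compactness of $\mathcal M_{LU}$ in $L^2(P)$ plus weak continuity of $L$, where the paper instead shows every boundary point $\theta^*(p)$ is attained and invokes Rockafellar--Wets; and you get strict convexity directly from uniqueness of the LP maximizer on $\{p'l\neq 0\}$ together with $P(p'l(Z)=0)=0$, where the paper proves differentiability of $p\mapsto\upsilon(p,\Theta_0(P))$ and cites Schneider's Corollary 1.7.3 --- the two are equivalent. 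Three small points: the null-set fact $P(p'l(Z)=0)=0$ comes from Assumption \ref{as:onY}(i) (absolute continuity of the law of $l(Z)$), not Assumption \ref{as:onP1}(i); ``mixing the distributions of admissible $Y$'s'' yields convex combinations of regression functions only for the square loss (for quantiles the regression function of a mixture is not the mixture of regression functions), so convexity should be argued by mixing the functions $m$ inside $\mathcal M_{LU}$, as the paper does; and strictly speaking convexity of $\Theta_0(P)$ as defined in \eqref{eq:idset} (the image of \emph{achievable} regression functions) presupposes the reverse inclusion $L(\mathcal M_{LU})\subseteq\Theta_0(P)$, i.e.\ sharpness --- a circularity the paper shares by stating the set identity up front.

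Part (c) is where you genuinely depart from the paper, and where your proposal is not complete. The paper's sharpness argument writes any $\theta$ as $\alpha\theta^*(p)+(1-\alpha)\theta^*(q)$ (legitimate once strict convexity is in hand), notes that $m_{\alpha,p,q}=\alpha m_p+(1-\alpha)m_q$ lies in the bounds and is a.e.\ differentiable under Assumption \ref{as:onP1}(iv), and then simply \emph{asserts} that such a function is the regression function of some admissible data-generating process; it never constructs the $Y$. You attempt the honest construction $Y=\lambda(Z)Y_L+(1-\lambda(Z))Y_U$ with $\lambda$ solving the conditional first-order condition, which is the right idea and the sign computation $F(z,0)\ge 0\ge F(z,1)$ is correct. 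But the obstacles you flag are real and not resolved in your write-up: $\lambda\mapsto F(z,\lambda)$ is monotone but can jump past zero when $q$ is discontinuous (the quantile case), so the intermediate value theorem does not apply without an additional continuity or no-atoms argument; measurability of the selector needs a measurable selection theorem; and one must check that the constructed $Y$'s first-order condition has $m(z)$ as its \emph{unique} root in $D$. Since the paper itself does not supply these steps, your sketch is a more demanding (and more rigorous, if completed) route to (c) rather than a reconstruction of the paper's; as it stands, however, (c) remains a plan with an acknowledged gap rather than a proof.
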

Theorem \ref{thm:thetabounds} suggests  that the support function is given by the inner product between $p$ and an extreme point $\theta^*(p)$, a unique point such that $\langle p,\theta^*(p)\rangle=\upsilon(p,\Theta_0(P))$, which can be expressed as:
	\begin{align}
	\theta^*(p)=\int m_p(z)l(z)dP(x),	\label{eq:extreme}
	\end{align}
where $m_p$ switches between $m_L$ and $m_U$ depending on the sign of  $p'l(z)$.	Heuristically, this comes from the fact that the support function of $\Theta_0(P)$ evaluated at $p$ is the maximized value of the map $m\mapsto E[m(Z)'p'l(Z)]$ subject to the constraint $m_L(Z)\le m(Z)\le m_U(Z), P-a.s.$	The maximum is then achieved by setting $m$ to $m_U$ when $p'l(z)>0$ and to $m_L$ otherwise.
	The form of the support function given in \eqref{eq:suppfunc} belongs to the general class of functions of the form $E[\Gamma(\delta_L(Z),\delta_U(Z),h(p,Z))h(p,Z)]$ for some functions $\delta_L,\delta_U,h$.	 This functional form is common in the literature on the best linear predictor of $m$. \citep[See e.g.][]{Stoye2007RC, BeresteanuMolinari2008E,BontempsMagnacMaurin2012E,ChandrasekharChernozhukovMolinari2011}.

Theorem \ref{thm:thetabounds} also gives closed-form bounds on the weighted average derivative with respect to the $j$-th variable. 
Let $\theta^{(j)}\equiv E[w(Z)\partial m(Z)/\partial z^{(j)}]$. The upper bound on $\theta^{(j)}$ can be obtained by setting $p$ to  $\iota_j$, a vector whose $j$-th component is 1 and other components are 0. The lower bound can be obtained similarly with $p=-\iota_j$.
Therefore, the  bounds on $\theta^{(j)}$ are given as $[\theta_L^{(j)},\theta_U^{(j)}]$ with
\begin{align}
\theta_L^{(j)}&=\int\big[1\{l^{(j)}(z)> 0\}m_L^{(j)}(z)+1\{l^{(j)}(z)\le 0\}m_U^{(j)}(z)\big]l^{(j)}(z)dP(x)\\
\theta_U^{(j)}&=\int\big[1\{l^{(j)}(z)\le 0\}m_L^{(j)}(z)+1\{l^{(j)}(z)> 0\}m_U^{(j)}(z)\big]l^{(j)}(z)dP(x).
\end{align}

\subsection{Identification when a regressor is interval-valued}
We now consider the setting where one of the regressors is interval-valued.
Let the vector of covariates be $(Z,V)$, where $ Z$ is fully observed but $V\in\mathcal V\subseteq\mathbb R$ is unobserved. Suppose that there exists a pair $(V_L,V_U)$ of observables such that $V_L\le V\le V_U$ with probability 1.
Our interest lies in the average derivative of the regression function defined by:
\begin{align}
	g(z,v)&\equiv \text{argmin}_{u}E[\varrho(Y-u)|Z=z,V=v].	\label{eq:intcovopt}
\end{align}
Assuming $g$ is differentiable with respect to $z$ $a.e.,$
we define the weighted average derivative pointwise by
\begin{align}
	\theta_{v}\equiv E[w(Z)\nabla_{z}g(Z,v)],\label{eq:weightedAD2}
\end{align}
where the expectation in \eqref{eq:weightedAD2} is with respect to the distribution of $Z$. $\theta_v$ is the average derivative with respect to the observable covariates, fixing  $V$  at a given value $v$. 
 In order to characterize the identified set for $\theta_v$, we make use of the regression function of $Y$ given all observable variables $\tilde Z\equiv(Z',V_L,V_U)'.$ Specifically, for each $(z',v_L,v_U)$, define
\begin{align}
\gamma(z,v_L,v_U)&\equiv \text{argmin}_{u}E[\varrho(Y-u)|Z=z,V_L=v_L,V_U=v_U].	
\end{align}

We make the following assumptions to characterize the identified set.
\begin{assumption}\label{as:mon}
(i) For each $z\in \mathcal Z$, $g(z,v)$ is weakly increasing in $v$. For each $v\in\mathcal V$, $g(z,v)$ is differentiable in $z$ with a bounded derivative;
(ii) For each $v\in\mathcal V$, it holds that
\begin{align}
E[q(Y-g(Z,V))| \tilde Z=\tilde z,V=v]=E[q(Y-g(Z,V))| Z= z,V=v].	
\end{align}
\end{assumption}

Following \cite{Manski_Tamer2002aE}, Assumption \ref{as:mon} (i) imposes a weak monotonicity assumption on the map $v\mapsto g(z,v)$. Without loss of generality, we here assume that $g(z,\cdot)$ is weakly increasing.
Assumption \ref{as:mon} (ii) is a conditional mean independence assumption of the ``regression residual'' $q(Y-g(Z,v))$ from $(V_L,V_U)$, which means that $(V_L,V_U)$ do not provide any additional information if $V$ is observed.
 In the case of mean regression, this condition reduces to the mean independence (MI) assumption in \cite{Manski_Tamer2002aE}.
 
For each $v$, let $\Xi_L(v)\equiv\{(v_L,v_U):v_L\le v_U\le v\}$ and $\Xi_U(v)\equiv\{(v_L,v_U):v\le v_L\le v_U\}$.
Under Assumptions  \ref{as:mon},  one may
 show that the following functional inequalities hold:
\begin{align}
g_L(Z,v)\le g(Z,v)\le g_U(Z,v),~P-a.s.,~ \text{for all } v,\label{eq:intbounds}	
\end{align}
where 
\begin{align}
	g_L(z,v)\equiv\sup_{(v_L,v_U)\in\Xi_L(v)}\gamma( z,v_L,v_U),~~\text{ and }~~g_U(z,v)\equiv \inf_{(v_L,v_U)\in \Xi_U(v)}\gamma( z,v_L,v_U).\label{eq:defgLgU}
\end{align}
We then assume the following regularity conditions.

\begin{assumption}\label{as:onmC}
	(i) There is a compact set $D\subset\mathbb R$ containing the support of $Y$ in its interior.  
(ii) $E[q( Y - u)|\tilde Z=\tilde z] = 0$ has a unique solution at $u=\gamma(\tilde z)\in D$; (iii) For each $v\in \mathcal V$, $g_j(z,v)$ is differentiable in $z$ with a bounded derivative for $j=L,U$. 
\end{assumption}

This assumption is an analog of Assumption \ref{as:onP1}. When Assumption \ref{as:onmC} (iii) holds, we may use it to ensure the sharpness of the identified set. It requires that the functional bounds $g_j,j=L,U$ are differentiable in $z$. Since $g_j,j=L,U$ defined in \eqref{eq:defgLgU} are  optimal value functions of  parametric optimization problems (indexed by ($z,v$)), this means that the value functions are assumed to obey an envelope theorem. Various sufficient conditions for such  results are  known \citep[see for example][]{Bonnans:2000fk,MilgromSegal2002E}, but this condition may not hold for some settings, in which case the obtained identified set gives possibly non-sharp bounds on the average derivatives.

Using an argument similar to the one used to establish Theorem \ref{thm:thetabounds}, we now characterize the identified set $\Theta_{0,v}(P)$ for $\theta_{v}$ through its support function. 
\begin{theorem}\label{thm:thetabounds2}
		Suppose Assumptions \ref{as:onY}-\ref{as:onw}, \ref{as:onP1} (ii),  \ref{as:mon}, and \ref{as:onmC} (i)-(ii) hold.
	Suppose further that for each $z\in\mathcal Z$ and $v\in\mathcal V$, $E[q(Y-u)|Z=z,V=v]=0$ has a unique solution at $u=g(z,v)\in D$.	 Then, (a) $\Theta_{0,v}(P)$ is compact and strictly convex; (b) its support function is  given pointwise by
	\begin{align}
\upsilon(p,\Theta_{0,v}(P))=\int g_{p}(z,v)p'l( z)dP(x),
	\end{align}
	where $g_p(z,v)=\Gamma(g_L(z,v),g_U(z,v),p'l(z))= 1\{p'l(z)\le 0\}g_L(z,v)+1\{p'l(z)>0\}g_U(z,v)$;
(c) If, additionally, Assumption \ref{as:onmC} (iii) holds, $\Theta_{0,v}(P)$ is sharp. 
\end{theorem}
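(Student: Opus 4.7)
The proof plan is to mirror the structure of Theorem \ref{thm:thetabounds}, with the added work of establishing the intersection bounds \eqref{eq:intbounds} from the monotonicity plus mean-independence conditions before invoking the same pointwise-maximization argument for the support function. First I would establish \eqref{eq:intbounds}. Fix $(v_L,v_U)\in\Xi_U(v)$, so $v\le v_L\le v_U$. For any $v'\in[v_L,v_U]$, Assumption \ref{as:mon}(i) gives $g(z,v)\le g(z,v')$. By Assumption \ref{as:mon}(ii), iterating expectations in $V\mid \tilde Z$ yields
\begin{equation*}
E[q(Y-u)\mid \tilde Z=\tilde z]=\int E[q(Y-u)\mid Z=z, V=v']\,dF_{V\mid\tilde Z}(v'\mid\tilde z),
\end{equation*}
and convexity of $\varrho$ together with the uniqueness clause (Assumption \ref{as:onmC}(ii)) force $\gamma(z,v_L,v_U)$ to lie between $\min_{v'\in[v_L,v_U]} g(z,v')$ and the corresponding maximum, hence by monotonicity between $g(z,v_L)$ and $g(z,v_U)$. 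Thus $g(z,v)\le g(z,v_L)\le \gamma(z,v_L,v_U)$; taking the infimum over $\Xi_U(v)$ gives $g(z,v)\le g_U(z,v)$. The lower bound is symmetric.

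Next I would handle parts (a) and (b) in parallel. Assumption \ref{as:onP1}(ii) and the boundary condition $w(z)f(z)=0$ on $\partial\mathcal Z$ justify integration by parts, so $\theta_v=\int g(z,v) l(z)\,dP(x)$. The enveloping set
\begin{equation*}
\Theta'\equiv\Bigl\{\int h(z,v) l(z)\,dP(x): g_L(z,v)\le h(z,v)\le g_U(z,v)\Bigr\}
\end{equation*}
is convex because the constraint is pointwise and the integral is linear, and compact because $l\in L^2(P)$ and $g_L,g_U$ are bounded (they inherit boundedness from $D$ compact in Assumption \ref{as:onmC}(i)). Its support function is obtained by pointwise maximization of the integrand $h(z,v)\,p'l(z)$ subject to the pointwise interval constraint, giving $h=g_U$ on $\{p'l(z)>0\}$ and $h=g_L$ on $\{p'l(z)\le 0\}$, i.e.\ $h=g_p$; substituting yields the stated formula for $\upsilon(p,\Theta_{0,v}(P))$. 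Strict convexity of $\Theta_{0,v}(P)$ then follows because absolute continuity of $l(Z)$ under Assumption \ref{as:onY}(i) implies $P(p'l(Z)=0)=0$ for each $p\neq 0$, so $g_p$ is uniquely defined $P$-a.s.\ and the maximizer in direction $p$ is unique, which is the standard characterization of strict convexity via the support function.

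The main obstacle I anticipate is part (c), the sharpness claim, since we must exhibit, for every $\theta\in\Theta'$, a single regression function $g^\ast(\cdot,\cdot)$ on all of $\mathcal Z\times\mathcal V$ that simultaneously (i) satisfies Assumption \ref{as:mon}, (ii) is consistent with the observed $\gamma(\tilde z)$, and (iii) delivers the target $\theta$ at the fixed $v$. For an extreme point $\theta^\ast(p)$, I would set $g^\ast(z,v)=g_p(z,v)$ and extend to $v'\neq v$ by interpolating between $g_L(z,\cdot)$ and $g_U(z,\cdot)$ in a weakly increasing manner, using the observed $\gamma$ to pin down the conditional distribution of $Y$ given $(Z,V)$ so that Assumption \ref{as:mon}(ii) holds by construction; convex combinations of such constructions handle non-extreme points. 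Here the envelope-type differentiability in Assumption \ref{as:onmC}(iii) is what guarantees that $g^\ast$ can be made differentiable in $z$ with a bounded derivative, so that Assumption \ref{as:mon}(i) is not violated by the construction. Verifying that this extension is compatible with the observed distribution of $(Y,\tilde Z)$ is the delicate step and parallels the sharpness construction of \cite{Manski_Tamer2002aE} rather than that of Theorem \ref{thm:thetabounds}(c), which is notably simpler.
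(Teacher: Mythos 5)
Your plan follows the paper's route almost exactly: derive the intersection bounds \eqref{eq:intbounds} from monotonicity and mean independence, then run the same pointwise-maximization, compactness, and strict-convexity arguments as in Theorem \ref{thm:thetabounds}. Two points of divergence are worth flagging. First, your displayed identity $E[q(Y-u)\mid\tilde Z=\tilde z]=\int E[q(Y-u)\mid Z=z,V=v']\,dF_{V\mid\tilde Z}(v'\mid\tilde z)$ is not available for arbitrary $u$: Assumption \ref{as:mon}(ii) equates the two conditional expectations only at the true residual, i.e.\ at $u=g(z,v')$. The paper avoids this by starting from the first-order condition $E[q(Y-g(Z,V))\mid\tilde Z,V]=0$, using monotonicity of $q$ and of $v\mapsto g(z,v)$ to sandwich $\int q(y-g(z,v_j))\,dP(y\mid\tilde z,v)$ around zero for $j=L,U$, and only then averaging over $V$ given $\tilde Z$; your argument goes through once restated this way, since you only ever need the inequalities at $u=g(z,v_L)$ and $u=g(z,v_U)$. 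Second, for sharpness the paper does far less than you propose: it merely observes that $g_L(\cdot,v)$ and $g_U(\cdot,v)$ are weakly increasing in $v$ by construction and differentiable in $z$ by Assumption \ref{as:onmC}(iii), hence themselves consistent with Assumption \ref{as:mon}(i), and then reuses the convex-combination argument of Theorem \ref{thm:thetabounds}(c) verbatim. Your full Manski--Tamer-style construction of a complete data-generating process (specifying the conditional law of $Y$ given $(Z,V)$ and verifying compatibility with the observed law of $(Y,\tilde Z)$) is a stronger standard of sharpness than the paper actually verifies, and you leave it as an outline rather than a completed argument; if you intend that standard you would need to carry the construction out, but to match the paper's claim the lighter check suffices.
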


\section{Efficiency Bound}\label{sec:effbound}
In this section, we show that a semiparametric efficiency bound exists for estimation of the support function when $Y$ is interval-valued.
Throughout, we assume that observed data $\{X_i\}_{i=i}^n$ are independently and identically distributed (i.i.d.).
\subsection{Efficiency bound for an infinite dimensional parameter}
We first discuss elements of efficiency analysis.\footnote{The theory of semiparametric efficiency for infinite dimensional parameters is discussed in detail in Chapter 5 in \cite{BickelKlassenRitov1993}. See also \cite{KaidoSantos2011}.}
 Suppose that $P$ is absolutely continuous with respect to a $\sigma$-finite measure $\mu$ on $\mathcal X$, which we denote by $P\ll\mu$. 
Let $\mathbf M$ denote the set of Borel probability measures on $\mathcal X$ and let $\mathbf M_{\mu} \equiv \{P \in \mathbf M : P \ll \mu\}$. 
The set $\mathbf M_{\mu}$  then may be identified with a subset of the space $L^2_\mu$, where:
\begin{align}
L^2_\mu\equiv\{s:\mathcal X\to\mathbf R:\|s\|_{L^2_\mu}<\infty\}~,\hspace{0.5in}\|s\|^2_{L^2_\mu}=\int s^2(x)d\mu(x).
\end{align}
via the mapping $s\equiv\sqrt{dP/d\mu}.$ A model $\mathbf P$ is then a subset of $\mathbf M_\mu$. 
We  define curves and the tangent space in the usual manner.
\begin{definition}\label{def:curve}\rm
A function $h :N \rightarrow L^2_\mu$ is a curve in $L^2_\mu$ if $N\subseteq \mathbb R$ is a neighborhood of zero and $\eta \mapsto h(\eta)$ is continuously Fr\'{e}chet differentiable on $N$. For notational simplicity, we write $h_\eta$ for $h(\eta)$ and let $\dot{h}_\eta$ denote its Fr\'{e}chet derivative at any point $\eta \in N$. 
\end{definition}

The tangent space of a model is then defined as follows.
\begin{definition}\label{def:tangent}\rm
For $\mathbf S \subseteq L^2_\mu$ and a function $s_0 \in \mathbf S$, the tangent set of $\mathbf S$ at $s_0$ is defined as:
\begin{equation}\label{deftangentdisp}
\dot{\mathbf S}^0 \equiv \{\dot h_\eta : \eta \mapsto h_\eta \text{ is a curve in } L^2_\mu \text{ with } h_0 = s_0 \text{ and } h_\eta \in \mathbf S \text{ for all } \eta \} ~.
\end{equation}
The tangent space of $\mathbf S$ at $s_0$, denoted by $\dot {\mathbf S}$, is the closure of the linear span of $\dot{\mathbf S}^0$ (in $L^2_\mu$). 
\end{definition}

The support function $\upsilon(\cdot,\Theta_0(P))$ is a continuous function on the unit sphere. Following Kaido and Santos (2011), we view it as a function-valued parameter taking values in $\mathcal C(\mathbb S^\ell)$, the set of continuous functions on $\mathbb S^\ell$.
A parameter defined on $\mathcal C(\mathbb S^\ell)$ is then a mapping $\rho : \mathbf P \rightarrow \mathcal C(\mathbb S^\ell)$ that assigns to each probability measure $Q \in \mathbf P$ a corresponding function in $\mathcal C(\mathbb S^\ell)$.
In order to derive a semiparametric efficiency bound for estimating $\rho(P)$, we require that the mapping $\rho : \mathbf P \rightarrow \mathcal C(\mathbb S^\ell)$ be smooth in the sense of being pathwise weak-differentiable.

\begin{definition}\label{def:weakpathdiff}\rm
	For a model $\mathbf P \subseteq \mathbf M_\mu$ and a parameter $\rho : \mathbf P \rightarrow \mathcal C(\mathbb S^{\ell})$ we say $\rho$ is pathwise weak-differentiable at $P$ if there is a continuous linear operator $\dot{\rho} :\dot{\mathbf S} \rightarrow \mathcal C(\mathbb S^{\ell})$ such that
	\begin{equation}
	\lim_{\eta\rightarrow 0} |\int_{\mathbb S^{d_\theta}} \{ \frac{\rho(h_\eta)(p) - \rho(h_0)(p)}{\eta} - \dot \rho(\dot h_0)(p)\}dB(p)| = 0 ~,
	\end{equation}
	for any finite Borel measure $B$ on $\mathbb S^{\ell}$ and any curve $\eta \mapsto h_\eta$ with $h_\eta\in\mathbf S$ and $h_0 = \sqrt {dP/d\mu}$.
\end{definition}

Given these definitions, we now present the semiparametric efficiency notion for estimating $\rho(P).$ When $\rho$ is pathwise weak-differentiable at $P$, the H\'{a}jek-LeCam convolution theorem (See  \cite{BickelKlassenRitov1993}) implies that any regular estimator of $\rho(P)$ converges in law to  $\mathbb G$:
$$\mathbb G \stackrel{L}{=} \mathbb G_0 + \Delta_0 ~,$$
where $\stackrel{L}{=}$ denotes equality in law and $\Delta_0$ is a tight Borel measurable random element in $\mathcal C(\mathbb S^\ell)$ independent of $\mathbb G_0$.\footnote{$\{T_n\}$ is regular if there is a tight Borel measurable $\mathbb G$ on $\mathcal C(\mathbb S^\ell)$ such that for every curve $\eta \mapsto h_\eta$ in $\mathbf P$ passing through $p_0 \equiv \sqrt{dP_0/d\mu}$ and every $\{\eta_n\}$ with $\eta_n = O(n^{-\frac{1}{2}})$, $\sqrt{n}(T_n - \rho(h_{\eta_n})) \stackrel{L_n}{\rightarrow} \mathbb G$ where $L_n$ is the law under $P_{\eta_n}^n$.}
 Thus, a regular estimator may be considered efficient if its asymptotic distribution equals that of $\mathbb G_0$.  We characterize the semiparametric efficiency bound by computing the covariance kernel for the Gaussian process $\mathbb G_0$, denoted:
\begin{equation}\label{def:cov}
I^{-1}(p_1,p_2)\equiv \text{Cov}(\mathbb G_0(p_1), \mathbb G_0(p_2))
\end{equation}
and usually termed the \textit{inverse information covariance functional} for $\rho$ in the model $\mathbf P$.

\subsection{Semiparametric efficiency bounds for support functions}
In this section, we derive the semiparametric efficiency bound for estimating the support function obtained in Theorem \ref{thm:thetabounds}.
Toward this end, we introduce some additional notation and add regularity conditions.
For each $z\in\mathcal Z$ and $j=L,U$, let
\begin{align}
	r_j(z)\equiv-\frac{d}{d\tilde m}E[q(Y_j-\tilde m)|Z=z]\Big|_{\tilde m=m_j(z)}.~~
\end{align}
For the mean regression, $r_j(z)$ equals 1, and for the quantile regression, $r_j(z)=f_{Y_j|Z}(m_j(z)|z)$, where $f_{Y_j|Z}$ is the conditional density functions of $Y_j$ given $Z$ for $j=L,U$.  
\begin{assumption}\label{as:onmu}
	(i) $\mu\in\mathbf M$ satisfies $\mu(\{(y_L,y_U,z):y_L\le y_U\})=1$; (ii) $\mu(\{(y_L,y_U,z):F(z)=0\})=0$ for any measurable function $F:\mathbb R^{\ell}\to \mathbb R$.
\end{assumption}

\begin{assumption}\label{as:onP2}
(i)	There exists $\bar \epsilon>0$ such that  $|r_L(z)|>\bar\epsilon$ and $|r_U(z)|>\bar\epsilon$ for all $z\in\mathcal Z$.
(ii) For any $\varphi:\mathcal X\to\mathbb R$ that is bounded and continuously differentiable in $z$ with bounded derivatives, $E[\varphi(X)|Z=z]$ is continuously differentiable in $z$ on $\mathcal Z$ with bounded derivatives; (iii) $E[q(Y_j-\tilde m) \varphi(X)|Z=z]$  is continuously differentiable in $(z,\tilde m)$ on $\mathcal Z\times D$ with bounded derivatives for $j=L,U$. 
\end{assumption}

Since $P\ll \mu$, Assumption \ref{as:onmu} (i) implies $Y_L\le Y_U$, $P-a.s.$ Similarly, $P\ll \mu$ and Assumption \ref{as:onmu} (ii) implies $P(F(Z)=0)=0$, which is used to establish pathwise differentiability of the support function. 
 Assumption \ref{as:onP2} gives additional regularity conditions on $P$.
Assumption \ref{as:onP2} (i) is trivially satisfied for the conditional mean because $r_L(z)=r_U(z)=1$. For the conditional $\alpha$-quantile,   Assumption \ref{as:onP2} (i)  requires  the conditional densities of $Y_L$ and $Y_U$ to be positive on neighborhoods of $m_L(z)$ and $m_U(z)$ respectively. Assumption \ref{as:onP2} (ii)-(iii) are regularity conditions   invoked in \cite{NeweyStoker1993EJES}, which we also impose here. It should be noted that these conditions exclude the setting  where either $Y_L$ or $Y_U$ is discrete and $q$ admits a point of discontinuity on the support of $\epsilon_L$ or $\epsilon_U$.

Given these assumptions,  we now define our model as the set of distributions that satisfy Assumptions \ref{as:onP1} and \ref{as:onP2}:
\begin{align}
\mathbf P\equiv\{P\in\mathbf M:P\ll\mu, P\text{ satisfies Assumptions \ref{as:onP1} and \ref{as:onP2}}\}.
\end{align}

For each $Q\in\mathbf P$,  define $\rho(Q)\equiv \upsilon(\cdot,\Theta_0(Q))$. 
The following theorem characterizes the efficiency bound for the support function.

\begin{theorem}\label{thm:InvCov}
Suppose Assumptions \ref{as:onY}-\ref{as:onw}, and \ref{as:onmu} hold, and suppose $P\in\mathbf P$.  Then, the inverse information covariance functional is given by
	\begin{align}
I^{-1}(p_1,p_2)=E[\psi_{p_1}(X)\psi_{p_2}(X)]~,\label{eq:invcov}
	\end{align}
	where for each $p\in\mathbb S^\ell$, the \emph{efficient influence function} $\psi_p$ is 
	\begin{align}
\psi_p(x)&\equiv 
w(z)p'\nabla_zm_{p}(z)-\upsilon(p,\Theta_0(P_0))+p'l(z)\zeta_{p}(x),\label{eq:infl1}
	\end{align}
and $\nabla_zm_{p}$ and $\zeta_p$ are given by $\nabla_zm_{p}(z)=\Gamma(\nabla_z m_L(z),\nabla_z m_U(z),p'l(z))$ and
	$\zeta_p(x)=\Gamma(r^{-1}_L(z)q(y_L-m_L(z)),r^{-1}_U(z)q(y_U-m_U(z)),p'l(z)).$
\end{theorem}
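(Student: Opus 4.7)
The plan is to verify pathwise weak-differentiability of $\rho(P)(p) = \upsilon(p,\Theta_0(P))$ at $P$ along arbitrary regular submodels, read off the efficient influence function from the pathwise derivative, and conclude via the Kaido-Santos (2011) characterization of the inverse information covariance functional. The first move is to use integration by parts (valid because $wf = 0$ on $\partial\mathcal Z$ by Assumption \ref{as:onP1}(ii)) to rewrite the support function as $\rho(P)(p) = E_P[w(Z) p'\nabla_z m_p(Z)]$, which is convenient because the $P$-dependence is isolated into the outer expectation and into the two regression functions $m_L, m_U$ that define $m_p$. Fix a curve $\eta \mapsto h_\eta$ with $h_0 = \sqrt{dP/d\mu}$ and associated score $g = 2\dot h_0/h_0 \in L^2_0(P)$. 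Differentiating inside the integral gives
\begin{equation*}
\tfrac{d}{d\eta}\rho(P_\eta)(p)\big|_{\eta=0} = E\bigl[w(Z) p'\nabla_z m_p(Z) g(X)\bigr] + E\bigl[w(Z) p'\nabla_z \dot m_p(Z)\bigr],
\end{equation*}
where $\dot m_p$ denotes the $\eta$-derivative of $m_{p,\eta}$ at $\eta=0$.

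The central implicit-function step then differentiates the first-order condition $E_{P_\eta}[q(Y_j - m_{j,\eta}(z))\mid Z=z] = 0$ in $\eta$ at $\eta=0$. Using Assumption \ref{as:onP2}(iii) to justify the interchange and Assumption \ref{as:onP2}(i) to invert $r_j(z)$, this yields $\dot m_j(z) = r_j(z)^{-1} E[q(Y_j - m_j(z)) g(X) \mid Z = z]$ for $j = L,U$. The kink of $\Gamma$ at $p'l(z) = 0$ is handled by Assumption \ref{as:onmu}(ii) together with $P \ll \mu$ and the absolute continuity of $l(Z)$ from Assumption \ref{as:onY}(i): these imply $P(p'l(Z) = 0) = 0$, so on a $P$-full set the sign of $p'l(z)$ is stable under infinitesimal perturbations and $\dot m_p(z) = \Gamma(\dot m_L(z), \dot m_U(z), p'l(z))$ almost surely.

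I then apply the same integration-by-parts identity used for Theorem \ref{thm:thetabounds} to move $\nabla_z$ off $\dot m_p$, obtaining $E[w(Z) p'\nabla_z \dot m_p(Z)] = E[p'l(Z) \dot m_p(Z)]$. Substituting the expression for $\dot m_j$ and using iterated expectations (noting $p'l(Z)$ is $Z$-measurable) rewrites this as $E[p'l(Z)\zeta_p(X) g(X)]$ with $\zeta_p$ as in the statement. Combining both pieces yields the Riesz-type representation $\dot\rho(\dot h_0)(p) = E\bigl[\{w(Z) p'\nabla_z m_p(Z) + p'l(Z)\zeta_p(X)\} g(X)\bigr]$. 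Centering uses two observations: first, $E[\zeta_p(X)\mid Z] = 0$ because of the first-order conditions $E[q(Y_j - m_j(Z))\mid Z] = 0$, hence $E[p'l(Z)\zeta_p(X)] = 0$; second, $E[w(Z)p'\nabla_z m_p(Z)] = \upsilon(p, \Theta_0(P))$. Subtracting this constant produces the stated $\psi_p$ with $E[\psi_p(X)] = 0$.

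To conclude, I verify that $\psi_p$ is the efficient influence function. Under the regularity in Assumptions \ref{as:onP1}-\ref{as:onP2}, $\mathbf P$ is locally nonparametric: standard bounded-score perturbations $h_\eta = h_0(1 + \eta g/2)/\|\cdot\|$ with $g \in L^\infty \cap L^2_0(P)$ remain in $\mathbf P$ for small $\eta$, so $\dot{\mathbf S}$ is dense in $L^2_0(P)$ and the orthogonal projection of any representer onto the tangent space is the identity; thus $\psi_p$ is efficient. Continuity of $p \mapsto \psi_p$ in the $L^2(P)$ sense, which is needed so that $\dot\rho$ maps into $\mathcal C(\mathbb S^\ell)$ as required by Definition \ref{def:weakpathdiff}, follows from continuity of each constituent in $p$ combined with dominated convergence, once again relying on the measure-zero condition from Assumption \ref{as:onmu}(ii) to kill contributions near the discontinuity set of $\Gamma$. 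The H\'ajek-LeCam convolution theorem, applied as in Kaido-Santos (2011), delivers the Gaussian limit whose covariance kernel is $I^{-1}(p_1,p_2) = E[\psi_{p_1}(X)\psi_{p_2}(X)]$. The hardest step is a careful justification of implicit differentiation of $m_j$ along submodels at the $L^2$ level under only the stated smoothness, including verifying that the remainder term in the first-order condition is $o(\eta)$ uniformly in $z$; the rest of the argument is essentially a bookkeeping exercise built on the integration-by-parts structure already established for Theorem \ref{thm:thetabounds}.
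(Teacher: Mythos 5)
Your overall strategy coincides with the paper's: compute the pathwise derivative of $\upsilon(p,\Theta_0(P_\eta))$ along curves, identify the influence function, show the tangent space is dense in $L^2_0(P)$, and invoke the convolution theorem; your implicit-function step for $\dot m_j$ is exactly the paper's Lemma \ref{lem:onm}, and your centering identities are the ones the paper uses, so you do land on the correct $\psi_p$. The genuine gap is in how you dispose of the kink of $\Gamma$. You differentiate $E_{P_\eta}[w(Z)p'\nabla_z m_{p,\eta}(Z)]$ in $\eta$ and argue that the moving indicator $1\{p'l_\eta(z)>0\}$ contributes nothing because $P(p'l(Z)=0)=0$ and the sign is a.s. locally stable. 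Almost-sure pointwise stability does not license interchanging the derivative and the integral here: the difference quotient of the indicator term is of order $1/\eta$ on the flip set $\{|p'l(Z)|\lesssim \eta\}$, whose probability is of order $\eta$, so it is not dominated; its integral is $O(1)$ and converges to a surface integral of $w f\, p'(\nabla_z m_U-\nabla_z m_L)$ over $\{z: p'l(z)=0\}$, which has no reason to vanish. The paper avoids this by first rewriting the support function so that the term carrying the moving indicator is $\int p'l_{\eta_0}(z)\,1\{p'l_{\eta}(z)>0\}\,(m_{U,\eta_0}(z)-m_{L,\eta_0}(z))\,\phi^2_{\eta_0}(z)d\nu(z)$ (Lemma \ref{lem:intermediate}): the factor $p'l_{\eta_0}(z)$ is itself of order $|\eta-\eta_0|$ exactly where the indicator can flip, so the difference quotient is bounded by a constant times $\int 1\{|p'l_{\eta_0}(z)|\le M|\eta-\eta_0|\}\phi^2_{\eta_0}(z)d\nu(z)\to 0$ by monotone convergence and Assumption \ref{as:onmu}(ii). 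Your route needs either this rearrangement (differentiate the $\int p'l\, m_{p,\eta}\,dP$ form in $\eta$, not the $\int w\,p'\nabla_z m_{p,\eta}\,dP$ form) or a separate argument that the surface contribution vanishes; as written the step fails. The same concern attaches to your subsequent integration by parts applied to $\dot m_p=\Gamma(\dot m_L,\dot m_U,p'l)$, which is discontinuous across $\{p'l(z)=0\}$.

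Two smaller points. The assertion that bounded-score perturbations ``remain in $\mathbf P$ for small $\eta$'' is precisely the content of the paper's Lemmas \ref{lem:onm}--\ref{lem:onmLmU} — one must check that Assumptions \ref{as:onP1} and \ref{as:onP2} are preserved along the curve (e.g.\ that $m_{j,\eta}$ remain well defined on $D$ and that $r_{j,\eta}$ stay bounded away from zero); this is routine but cannot be skipped, since otherwise the tangent space could be a proper subspace of $L^2_0(P)$ and the bound would not be the efficiency bound. Likewise, pathwise weak differentiability into $\mathcal C(\mathbb S^\ell)$ requires the pointwise derivative to be uniformly bounded and jointly continuous in $(p,\eta)$ so that dominated convergence applies to $\int_{\mathbb S^\ell}(\cdot)\,dB(p)$ for arbitrary finite Borel measures $B$ (the paper's Lemmas \ref{lem:bdd} and \ref{lem:cts}); you acknowledge the need but do not supply the argument.
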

Theorem \ref{thm:InvCov} naturally extends Theorem 3.1 in \cite{NeweyStoker1993EJES} to the current setting. When there is no interval censoring, i.e. $m_L(Z)=m(Z)=m_U(Z)$, the obtained semiparametric efficiency bound reduces to that of \cite{NeweyStoker1993EJES}, i.e. $\psi_p=p'\psi$, where $\psi$ is the efficient influence function for point identified $\theta$.
Theorem \ref{thm:InvCov} also shows that the variance bound for estimating the support function $\upsilon(p,\Theta_0(P))$ at $p$ is given by
\begin{align}
E[|\psi_p(X)|^2]=Var(w(Z)p'\nabla_zm_{p}(Z))+E[|p'l(Z)\zeta_p(x)|^2].	\label{eq:vardec}
\end{align}
The first term in \eqref{eq:vardec} can be interpreted as the variance bound when $m_p$ is known but $f$ is unknown as this is the asymptotic variance of $\frac{1}{n}\sum_{i=1}^n w(Z_i)p'\nabla_zm_p(Z_i)$, while the second term can be interpreted as the variance bound when $f$ is known but $m_p$ is unknown. \citep[See][page 1205 for a more detailed discussion.]{NeweyStoker1993EJES} 

Theorem \ref{thm:InvCov} establishes that the support function of the identified set has a finite efficiency bound. In the next section, we show that it is possible to construct an estimator that achieves this bound in a leading example. Efficient estimation of the support function also has an important consequence on estimation of the identified set.
Namely, an estimtor of the identified set which is constructed from the efficient estimator of the support function is also asymptotically optimal for a wide class of loss functions based on the Hausdorff distance. (See Remark \ref{rem:setest}.)

\begin{remark}\rm
\cite{PowellStockStoker1989EJES} study the setting where $m$ is the conditional mean, and the weight function is the \textit{density weight}: $w(z)=f(z)$.
The efficiency bound in Theorem \ref{thm:InvCov} can be extended to this setting. For this choice of the weight function, the efficient influence function differs slightly from Eq. \eqref{eq:infl1} due to $f$ being unknown. Taking into account the pathwise derivative of unknown $f$, one can show that the inverse covariance functional for this case is given as in \eqref{eq:invcov} with
	\begin{align}
\psi_{p}(x)\equiv 2\{f(z)p'\nabla_zm_{p}(z)-\upsilon(p,\Theta_0(P))\}-2p'\nabla_z f(z)(y_{p}-m_p(z))~,\label{eq:psspsip}
	\end{align}
	where $y_p=\Gamma(y_L,y_U,p'l(z)).$
\end{remark}

\begin{remark}\rm
 For the setting where an explanatory variable is interval-valued, Theorem \ref{thm:thetabounds2}	shows that the support function of $\Theta_{0,v}(P)$ involves functions that are defined as the supremum (or the infimum) of functions indexed by $(v_L,v_U)$, e.g. $g_L(z,v)=\sup_{(v_L,v_U)\in\Xi_L(v)}\gamma(z,v_L,v_U)$. These types of bounds are known as the \textit{intersection bounds} \citep{Chernozhukov:2009uq}. In particular, for  parametric submodels $\eta\mapsto P_\eta$ passing through $P$, one may show that the support function depends on the intersection bounds in the following way:
\begin{align}\upsilon(p,\Theta_{0,v}(P_\eta)) = \int \big[1\{p'l_\eta(z)\le 0\}g_{L,\eta}(z,v)+1\{p'l_\eta(z)> 0\}g_{U,\eta}(z,v)\big]p'l_\eta(z)dP_\eta(x),
	\end{align}
	where $l_\eta$ is defined as in \eqref{eq:intparts} and $g_{L,\eta},g_{U,\eta}$ are defined as in \eqref{eq:defgLgU} under $P_\eta$.
 \cite{Hirano:2012uq} give general conditions under which intersection bounds are not pathwise differentiable therefore do not  admit regular estimation. When the set of $z$'s on which $g_{L,\eta}$ or $g_{U,\eta}$ is pathwise non-differentiable has a positive probability mass, the support function is pathwise non-differentiable as well. Hence, $\upsilon(p,\Theta_{0,v}(P))$ does not generally admit regular estimation. Therefore, for optimal inference on $\upsilon(p,\Theta_{0,v}(P))$, an alternative optimal criterion would be needed. \citep[See for example][]{Song:2010uq,Chernozhukov:2009uq}. 
 
There is, however, a possibility for regular estimation of a function that approximates $\upsilon(p,\Theta_{0,v}(P))$.
For simplicity, suppose that $V_L$ and $V_U$ have discrete supports so that $\Xi_L(v)$ and $\Xi_U(v)$ are finite sets. Then for a given $\kappa>0$, define 
\begin{align}
	\mathbf g_L(z,v;\kappa)&\equiv\sum_{(v_L,v_U)\in \Xi_L(v)}\gamma(z,v_L,v_U)\frac{\exp(\kappa \gamma(z,v_L,v_U))}{\sum_{(v_L,v_U)\in \Xi_L(v)}\exp(\kappa \gamma(z,v_L,v_U))}\label{eq:smoothmax}\\
	\mathbf g_U(z,v;\kappa)&\equiv\sum_{(v_L,v_U)\in \Xi_U(v)}\gamma(z,v_L,v_U)\frac{\exp(-\kappa \gamma(z,v_L,v_U))}{\sum_{(v_L,v_U)\in \Xi_U(v)}\exp(-\kappa \gamma(z,v_L,v_U))}~,\label{eq:smoothmin}
\end{align}
where the smooth weighted averages on the right hand side of the equations above
conservatively approximate the maximum and  minimum  respectively, where the approximation errors are inversely proportional to $\kappa$   \citep{Chernozhukov:2012qy}. Suppose that the researcher chooses a fixed $\kappa>0$.
Define $\mathbf u(p;\kappa)\equiv\int \mathbf g_{p}(z,v;\kappa)p'l( z)dP(x)$, where $\mathbf g_p(z,v;\kappa)=\Gamma(\mathbf g_L(z,v;\kappa),\mathbf g_U(z,v;\kappa),p'l( z))$. $\mathbf u(p;\kappa)$ is then  a conservative approximation of the support function $\upsilon(p,\Theta_{0,v}(P))$ whose approximation bias can be bounded as follows:
\begin{align}
|\mathbf u(p;\kappa)-\upsilon(p,\Theta_{0,v}(P))|\le C  E[|l(Z)|^2]\kappa^{-2}~\text{ uniformly in }p\in\mathbb S^\ell~,
\end{align} 
where $C$ is a positive constant that depends on the cardinality of  the support of $(V_L,V_U)$.
Note that $\mathbf u(p;\kappa)$ depends smoothly  on the underlying distribution. This is because, as oppose to the maximum and minimum, the smooth weighted averages in \eqref{eq:smoothmax}-\eqref{eq:smoothmin} relate $\mathbf g_L,\mathbf g_U$ and $\gamma$ in a differentiable manner. This suggests that, although regular estimation of $\upsilon(p,\Theta_{0,v}(P))$ is not generally possible, it may be possibile to estimate  $\mathbf u(p;\kappa)$ in a regular manner, which we leave as   future work.
\end{remark}

\section{Estimation of Density Weighted Average Derivative in Mean Regression}\label{sec:est}
In this section, we illustrate efficient estimation by studying a well-known example. We focus on the  case where  $Y$ is interval-valued, and the parameter of interest is the density weighted average derivative of the mean regression function as in \cite{PowellStockStoker1989EJES}. That is, $\theta=E[f(Z)\nabla_zm(Z)]$, where $m$ is the conditional mean of $Y$. The density weight is  attractive for practical purposes as it does not require the choice of a trimming parameter to control for the stochastic denominator $\hat f_{i,h}$ defined below, which would otherwise arise upon estimating the ``score function'' $l$.\footnote{\cite{PowellStockStoker1989EJES} are interested in estimating index coefficients up to scale in a single index model.
The scale normalization implicit in the density weighted average derivative may not yield most easily interpretable estimates. For this reason, \cite{PowellStockStoker1989EJES} also suggest estimating the rescaled parameter $\tilde\theta\equiv \theta/E[f(Z)]$. We consider an estimator of the identified set of $\tilde\theta$ in Section \ref{sec:mc}.}
  
Theorem \ref{thm:thetabounds} and the law of iterated expectations imply that the support function of the identified set in this setting is given by
\begin{align}
	\upsilon(p,\Theta_0(P))=E[Y_pp'l(Z)],\label{eq:upspss}
\end{align}
where $Y_p=\Gamma(Y_L,Y_U,p'l(Z))$ and $l(z)=-2\nabla_z f(z)$. Our estimator of the support function is a natural extension of \cite{PowellStockStoker1989EJES} and replaces unknown objects in \eqref{eq:upspss} with nonparametric kernel estimators and expectations with sample averages.
Let $K:\mathbb R^\ell\to \mathbb R$ be a kernel function. For each $z\in\mathcal Z$, $i$ and bandwidth $h$, define the ``leave-one-out'' kernel density estimator by
\begin{align}
\hat f_{i,h}(z)\equiv\frac{1}{(n-1)h^\ell}\sum_{j=1,j\ne i}^n	K\left(\frac{z-Z_j}{h}\right).
\end{align}
 Our estimator of $l$ is then defined by $\hat l_{i, h}(z)\equiv -2\nabla_z \hat f_{i,h}(z)$.  The support function of $\Theta_0(P)$ is then estimated by
\begin{align}
\hat \upsilon_n(p)\equiv
\frac{1}{n}\sum_{i=1}^n p'\hat l_{i,h}(Z_{i})\hat Y_{p,i},\label{eq:shat}
\end{align}
where $\hat Y_{p,i}$ is an estimator of $Y_{p,i}$, which is not observed. For this, we let $\hat Y_{p,i}=\Gamma(Y_{L,i},Y_{U,i},p'\hat l_{i,\tilde h}),$ where $\tilde h$ is another bandwidth parameter.
Computing  the estimator in \eqref{eq:shat} only involves  kernel density estimation and taking averages. Hence, it can be implemented quite easily.
When the researcher is only interested in the average derivative  with respect to a particular variable, the required computation simplifies further. For example, suppose the parameter of interest is the average derivative $\theta^{(j)}$  with respect to the $j$-th variable.
An estimate of the upper bound on $\theta^{(j)}$ can be obtained by computing the support function in \eqref{eq:shat} only for one direction $p=\iota_j$, i.e., $\hat\theta_{U,n}^{(j)}=\hat\upsilon_n(\iota_j)$. The lower bound can be computed similarly with $p=-\iota_j$.

We now add regularity conditions required for efficient estimation of the support function.
Let $J\equiv(\ell+4)/2$ if $\ell$ is even and $J\equiv (\ell+3)/2$ if $\ell$ is odd.  
\begin{assumption}\label{as:onf_est}
(i)	There exists $M:\mathcal Z\to\mathbb R_+$ such that
\begin{align}
&	\|\nabla_z f(z+e)-\nabla_zf(z)\|<M(z)\|e\|\\
&	\|\nabla_z (f(z+e)\times m_j(z+e))-\nabla_z(f(z) \times m_j(z))\|<M(z)\|e\|,~~j=L,U,
\end{align}
and $E[|M(Z)|^2]<\infty$. 
(ii) 	 All partial derivatives of $f$ of order $J+1$ exist. $E[Y_L(\partial^k f/\partial z^{(j_1)}\cdots\partial z^{(j_k)})]$ and $E[Y_U(\partial^k f/\partial z^{(j_1)}\cdots\partial z^{(j_k)})]$ exist for all $k\le J+1$.
\end{assumption}

\begin{assumption}\label{as:onK}
(i) The support $\mathcal S_K$ of $K$ is a convex subset of $\mathbb R^\ell$ with nonempty interior with the origin as an interior point. Let $\partial \mathcal S_K$ be the boundary of $\mathcal S_K$; (ii) $K$ is a bounded, continuously differentiable function with bounded derivatives, and $\int K(u)du=1$, $\int uK(u)du=0$; (iii) $K(u)=0$ for all $u\in\partial\mathcal S_K$; (iv) $K(u)=K(-u)$ for all $u\in\mathcal S_K$. 	(v) 	$K(u)$ is of order $J$:
		\begin{align}
	\int u^{j_1}u^{j_2}\cdots u^{j_k}K(u)du&=0,~j_1+\cdots+j_k<J\\
	\int u^{j_1}u^{j_2}\cdots u^{j_k}K(u)du&\ne0,~j_1+\cdots+j_k=J.
		\end{align}
		(vi) The $J$ moments of $K$ exist.
\end{assumption}

Assumptions \ref{as:onf_est} and \ref{as:onK} are based on the assumptions in \cite{PowellStockStoker1989EJES}. Assumption \ref{as:onf_est} imposes  suitable smoothness conditions on $ f$, $m_L$ and $m_U$. Assumption \ref{as:onK} then gives standard regularity conditions on the kernel. A higher-order kernel is used to remove an asymptotic bias. With these additional assumptions, the next theorem establishes the asymptotic efficiency of the estimator.

\begin{theorem}\label{thm:estimator}
	Suppose the conditions of Theorem \ref{thm:InvCov} hold and Assumptions \ref{as:onf_est} and \ref{as:onK} hold. 
	Suppose further that $h\to0$, $nh^{\ell+2+\delta}\to \infty$ for some $\delta>0$, $n h^{2J}\to 0$, $\tilde h\to 0$, and $n\tilde h^{4(\ell+1)}\to \infty$ as $n\to\infty$.   Then, (a) $\{\hat \upsilon_n(\cdot)\}$ is a regular estimator for $\upsilon(\cdot,\Theta_0(P))$; (b) Uniformly in $p\in \mathbb S^{\ell}$:
	\begin{multline}\label{th:distdisp1}
	\sqrt{n}\{\hat  \upsilon_n(p) - \upsilon(p,\Theta_0(P))\} 	=\frac{2}{\sqrt n}\sum_{i=1}^n[f(Z_i)p'\nabla_zm_{p}(Z_i)-\upsilon(p,\Theta_0(P))]\\
-\frac{2}{\sqrt n}\sum_{i=1}^np'\nabla_z f(Z_i)(Y_{p,i}-m_{p}(Z_i))+o_p(1);
	\end{multline}
(c) As a process in $\mathcal C(\mathbb S^\ell)$,
	\begin{equation}\label{th:distdisp2}
	\sqrt{n}\{ \hat\upsilon_n(\cdot) -  \upsilon(\cdot,\Theta_0(P))\} \stackrel{L}{\rightarrow} \mathbb G_0 ~,
	\end{equation}
	where $\mathbb G_0$ is a tight mean zero Gaussian process on $\mathcal C(\mathbb S^{\ell})$ with $\emph{Cov}(\mathbb G_0(p_1),\mathbb G_0(p_2)) = I^{-1}(p_1,p_2)$.
\end{theorem}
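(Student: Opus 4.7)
The plan is to follow the \cite{PowellStockStoker1989EJES} strategy, decomposing $\hat\upsilon_n(p)$ into an oracle estimator $\tilde\upsilon_n(p)\equiv n^{-1}\sum_i p'\hat l_{i,h}(Z_i)Y_{p,i}$ that uses the true $Y_{p,i}=\Gamma(Y_{L,i},Y_{U,i},p'l(Z_i))$ and a remainder $\hat\upsilon_n(p)-\tilde\upsilon_n(p)$ arising from the plug-in $\hat Y_{p,i}$. I will analyze the oracle as a second-order $U$-statistic via Hoeffding's decomposition and control the remainder by combining the uniform convergence rate of $\hat l_{i,\tilde h}$ with an anti-concentration inequality for $p'l(Z)$. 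Throughout, the argument must be carried out uniformly in $p\in\mathbb S^\ell$ so that parts (b) and (c) both follow.

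For the oracle, the antisymmetry $(\nabla K)(-u)=-(\nabla K)(u)$ implied by Assumption \ref{as:onK}(iv) lets me rewrite
\begin{align*}
\tilde\upsilon_n(p)=\binom{n}{2}^{-1}\sum_{i<j} u_{n,p}(W_i,W_j),\quad u_{n,p}(W_i,W_j)\equiv-\frac{p'(\nabla K)((Z_i-Z_j)/h)}{h^{\ell+1}}(Y_{p,i}-Y_{p,j}),
\end{align*}
with $W_i=(Y_{L,i},Y_{U,i},Z_i)$. Hoeffding's decomposition then produces three controllable pieces. First, the bias $\mathbb E[u_{n,p}]-\upsilon(p,\Theta_0(P))$ is $O(h^J)=o(n^{-1/2})$ by the order-$J$ kernel (Assumption \ref{as:onK}(v)) combined with $nh^{2J}\to 0$. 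Second, the degenerate second-order term $R_{2,n}(p)$ satisfies $\mathbb E[R_{2,n}(p)^2]=O(n^{-2}h^{-\ell-2})=o(n^{-1})$ since $\|u_{n,p}\|_{L^2}^2=O(h^{-\ell-2})$ and $nh^{\ell+2+\delta}\to\infty$. Third, the linear piece $\frac{2}{n}\sum_i u_{1,p}(W_i)$ is computed by conditioning on $W_1$, the change of variables $u=(Z_1-z_2)/h$, and integration by parts in $u$ (boundary terms vanish by Assumption \ref{as:onK}(iii)), yielding
\begin{align*}
u_{1,p}(W_1)\;\longrightarrow\; f(Z_1)p'\nabla_z m_p(Z_1)-\upsilon(p,\Theta_0(P))-p'\nabla_z f(Z_1)(Y_{p,1}-m_p(Z_1)),
\end{align*}
which, multiplied by $2$, matches exactly the density-weighted efficient influence function $\psi_p$ from the remark following Theorem \ref{thm:InvCov}. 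A delicate point: $m_p$ jumps across $\{p'l(z)=0\}$, but Assumption \ref{as:onmu}(ii) combined with $P\ll\mu$ gives $P(p'l(Z)=0)=0$, so for $P$-a.e.\ $Z_1$ the $h$-neighborhood of $Z_1$ misses the level set once $h$ is small, and the classical gradient $\nabla_z m_p(z)=\Gamma(\nabla_z m_L,\nabla_z m_U,p'l(z))$ governs the limit.

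For the feasibility step, $\hat Y_{p,i}-Y_{p,i}=(Y_{U,i}-Y_{L,i})[1\{p'\hat l_{i,\tilde h}(Z_i)>0\}-1\{p'l(Z_i)>0\}]$ vanishes unless the two signs disagree, and the latter event is contained in $\{|p'l(Z_i)|\le\|\hat l_{i,\tilde h}(Z_i)-l(Z_i)\|\}$. Standard uniform kernel rates give $\sup_z\|\hat l_{i,\tilde h}(z)-l(z)\|=o_p(n^{-1/4})$ under $n\tilde h^{4(\ell+1)}\to\infty$, while absolute continuity of the distribution of $l(Z)$ (Assumption \ref{as:onY}(i)) together with Assumption \ref{as:onmu}(ii) yields a uniform anti-concentration bound $P(|p'l(Z)|\le\epsilon)\le C\epsilon$ for $p\in\mathbb S^\ell$. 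Combining with $\mathbb E\|p'\hat l_{i,h}(Z_i)\|^2=O(1)$ via Cauchy--Schwarz then gives $\sqrt n\{\hat\upsilon_n(p)-\tilde\upsilon_n(p)\}=o_p(1)$ uniformly in $p$. Part (a) follows from part (b) because the expansion's influence function coincides with the efficient one from Theorem \ref{thm:InvCov}. For part (c), finite-dimensional convergence is Lindeberg's CLT applied to the efficient influence functions, and tightness reduces to the Donsker property of $\{\psi_p:p\in\mathbb S^\ell\}$: the indicator class $\{1\{p'l(z)>0\}\}_{p\in\mathbb S^\ell}$ is VC-subgraph, while $l$, $\nabla_z m_L$, $\nabla_z m_U$ and $Y_L,Y_U$ are bounded (Assumptions \ref{as:onP1}(i)(ii)(iv) and \ref{as:onf_est}), so the class is $P$-Donsker and the limit has covariance $I^{-1}$.

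The main obstacle is the feasibility step. The map $\xi\mapsto 1\{p'\xi>0\}$ is discontinuous in both $\xi$ and $p$, so $\hat\upsilon_n-\tilde\upsilon_n$ is a non-smooth functional of $\hat l_{i,\tilde h}$; controlling it uniformly over $p\in\mathbb S^\ell$ requires a careful coupling of the uniform-in-$z$ rate of the kernel derivative estimator with the anti-concentration inequality for $p'l(Z)$. A secondary hurdle in the $U$-statistic computation is the non-smoothness of $m_p$ on $\{p'l(z)=0\}$ when identifying the Hoeffding projection with the classical gradient, which is absorbed using the a.s.\ absolute continuity of $l(Z)$.
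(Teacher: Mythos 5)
Your overall architecture coincides with the paper's: decompose $\sqrt n(\hat\upsilon_n-\upsilon)$ into the feasibility error, a centered second-order $U$-statistic handled by Hoeffding/H\'ajek projection, and a kernel bias term killed by the order-$J$ kernel and $nh^{2J}\to 0$; identify the projection with the density-weighted influence function; and get part (c) from a VC-subgraph/Donsker argument for $\{\psi_p\}$. The projection computation, the bias term, the Donsker step, and the deduction of regularity from asymptotic linearity with the efficient influence function are all fine and match the paper's Lemmas D.3--D.6.

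The genuine gap is in your feasibility step, i.e.\ the control of $\sqrt n\{\hat\upsilon_n(p)-\tilde\upsilon_n(p)\}$ (the paper's $G_{1n}$, Lemma D.2). Your bound is: sign disagreement implies $|p'l(Z_i)|\le\sup_z\|\hat l_{i,\tilde h}(z)-l(z)\|=o_p(n^{-1/4})$, then anti-concentration $P(|p'l(Z)|\le\epsilon)\le C\epsilon$ plus Cauchy--Schwarz with $E\|\hat l_{i,h}(Z_i)\|^2=O(1)$. Run the arithmetic: this gives
\begin{equation*}
\sqrt n\,\bigl|\hat\upsilon_n(p)-\tilde\upsilon_n(p)\bigr|\;\lesssim\;\sqrt n\cdot O(1)\cdot\bigl(P(\text{disagree})\bigr)^{1/2}\;=\;\sqrt n\cdot o_p\bigl(n^{-1/8}\bigr)\;=\;o_p\bigl(n^{3/8}\bigr),
\end{equation*}
which does not vanish; even dropping the square root from Cauchy--Schwarz you get $o_p(n^{1/4})$. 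For a linear-in-$\epsilon$ anti-concentration bound to close the argument you would need $\sup_z\|\hat l_{i,\tilde h}(z)-l(z)\|=o_p(n^{-1})$ (or $o_p(n^{-1/2})$ without the square root), which no nonparametric derivative estimator can deliver. The paper avoids this entirely: its Lemma D.2 shows $E[\sup_{p}|u_{i,n}(p)|^2]=o(n^{-1})$ by arguing that the probability of a sign disagreement for \emph{some} $p\in\mathbb S^\ell$ decays \emph{exponentially}, via a Talagrand-type maximal inequality (Theorem 2.14.16 of van der Vaart and Wellner) applied to the VC class $\{z_j\mapsto p'\nabla_zK((z-z_j)/\tilde h)\}$, under exactly the bandwidth condition $n\tilde h^{4(\ell+1)}\to\infty$. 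Your polynomial rate-times-anti-concentration route is a different mechanism and, as stated, fails quantitatively. A secondary, more repairable issue: your bound $E[R_{2,n}(p)^2]=O(n^{-2}h^{-\ell-2})$ for the degenerate $U$-statistic term is pointwise in $p$; uniformity over $\mathbb S^\ell$ requires a maximal inequality for degenerate $U$-processes over a Euclidean/VC class (the paper invokes Sherman's 1994 Theorem 3 after verifying the Euclidean property in Lemma D.1), not just a variance computation.
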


\begin{remark}\rm\label{rem:setest}
Each extreme point $\theta^*(p)$ can be estimated by its sample analog estimator,
$\hat\theta_n(p)=\frac{1}{n}\sum_{i=1}^n\hat l_{i,n}(Z_{i})\hat Y_{p,i}$.
	Using this estimator, we may also define an estimator  of the identified set as follows:
	\begin{align}
	\hat\Theta_n\equiv\text{co}(\{\theta\in \Theta:\theta=\hat\theta_n(p),~p\in\mathbb S^\ell\}),
	\end{align}
	where $\text{co} (A)$ denotes the  convex hull of $A$.
  Theorem \ref{thm:estimator} has direct consequences on the  consistency and asymptotic optimality of this set estimator. For any two compact convex sets $A,B$, let the Hausdorff distance be $d_H(A,B)\equiv \max\{\sup_{a\in A}\inf_{b\in B}\|a-b\|,\sup_{b\in B}\inf_{a\in A}\|a-b\|\}.$
   Due to the equality of the Hausdorff distance between sets and the supremum distance between the corresponding support functions $d_H(A,B)=\sup_{p\in\mathbb S^\ell}|\upsilon(p,A)-\upsilon(p,B)|$ for any compact convex sets $A,B$, we have $d_H(\hat\Theta_n,\Theta_0(P))=O_p(n^{-1/2})$ by H\"{o}rmander's embedding theorem \citep{Li_Ogura_etal2002aBK}.
Further, \cite{KaidoSantos2011} show that if $\hat\Theta_n$ is associated with the efficient estimator of the support function and $L:\mathbb R_+\to\mathbb R_+$ is a subconvex continuous function,  it holds under regularity conditions that
	\begin{align}
\liminf_{n\to\infty}E[L(\sqrt nd_H(C_n,\Theta_0(P)))]\ge \limsup_{n\to\infty}E[L(\sqrt nd_H(\hat\Theta_n,\Theta_0(P)))]=E[L(\|\mathbb G_0\|_\infty)],
	\end{align}
for any regular convex compact valued set estimator $\{C_n\}$  for $\Theta_0(P)$.
That is, among regular set estimators, $\hat\Theta_n$ asymptotically minimizes a wide class of estimation risks based on the Hausdorff distance.
\end{remark}	

\begin{remark}\rm
Efficient estimators of the support function can also be used to conduct inference for $\Theta_0(P)$ and points inside it using the score-based weighted bootstrap of \cite{KlineSantos2012JEM}. Specifically, let $W_i$ be a mean zero random scalar with variance 1 and let $\{W_i\}$ be a sample independent of $\{X_i\}_{i=1}^n$. 
For each $p\in\mathbb S^\ell$, define the process:
\begin{align}
	G^*_n(p)\equiv \frac{1}{\sqrt n}\sum_{i=1}^nW_i\Big\{\sum_{j=1,j\ne i}\frac{-2}{(n-1)h^{\ell+1}}\nabla_zK\Big(\frac{Z_i-Z_j}{h}\Big)(\hat Y_{p,i}-\hat Y_{p,j})-\hat \upsilon_n(p)\Big\},
\end{align}
where the process $G^*_n$ is a $U$-process which is first-order asymptotically equivalent to the profess $\frac{1}{\sqrt n}\sum_{i=1}^nW_i\psi_p(X_i)$.\footnote{This can be shown following an argument similar to the one in Section 3.4 in \cite{PowellStockStoker1989EJES}. The proof is omitted for brevity.}
In practice, the distribution of $G^*_n$ can be simulated by generating random samples of $\{W_i\}$, which  weakly converges to $\mathbb G_0$ conditional on $\{X_i,i=1,\cdots,n\}$. This distribution, in turn, can be used to make inference. For example, 
a level $1-\alpha$ one-sided confidence set as in \cite{BeresteanuMolinari2008E}  can be constructed as $\mathcal C_{1n}\equiv \hat\Theta_n^{c^*_{1n}/\sqrt n}$, where $\hat\Theta_n^\epsilon\equiv\{\theta\in\Theta :\inf_{\theta'\in\hat\Theta_n}\|\theta-\theta'\|\le \epsilon\}$ and $c^*_{1n}$ is the $1-\alpha$ quantile of $\sup_{p\in\mathbb S^\ell}\{- G^*_n(p)\}_+$ \citep[see also][]{Kaido2010aWP,KaidoSantos2011}. 
\end{remark}

\begin{remark}\rm

One may also construct a confidence set for a particular coordinate $\theta^{(j)}$ of $\theta$ or its identified set $\Theta^{(j)}_0(P)$. For example, a symmetric confidence set for $\Theta^{(j)}_0(P)$ can be constructed as $$\mathcal C^{(j)}_{n}\equiv [\hat \theta^{(j)}_{L,n}-c^{(j)}_{n}/\sqrt n,\hat \theta^{(j)}_{U,n}+c^{(j)}_{n}/\sqrt n],$$ 
where $c^{(j)}_{n}$ is the $1-\alpha$ quantile of $\max_{p\in\{\iota_j,-\iota_j\}}\{- G^*_n(p)\}_+$. 
\end{remark}

\section{Simulation evidence}\label{sec:mc}
In this section, we examine the finite sample performance of an estimator of the support function through Monte Carlo experiments. 
Throughout, we let $Z_i \equiv (Z_{1,i}, Z_{2,i},Z_{3,i})^\prime$, where $Z_{1,i}=1$ is a constant, and $Z_{2,i}$ and $Z_{3,i}$ are continuously distributed. 
For $\beta= (1,1)^\prime$, we generate:
\begin{equation}\label{mc1}
Y_i = Z_i^\prime \beta + \epsilon_i  \hspace{0.3 in} i=1,\ldots,n  ~,
\end{equation}
where $\epsilon_i$ is a standard normal random variable independent of $Z_i$. We then generate $(Y_{L,i},Y_{U,i})$ as:
\begin{align}\label{mc2}
Y_{L,i} &=Y_i - c  -e_{2i} Z_{2i}^2-e_{3i} Z_{3i}^2   \nonumber\\
Y_{U,i} &=Y_i + c + e_{2i} Z_{2i}^2+ e_{3i} Z_{3i}^2 ~,
\end{align}
where $c > 0$ and $e_{2i}$ and $e_{3i}$ are independently uniformly distributed on $[0,0.2]$ independently of $(Y_i,Z_i)$. Here, $c$ is a design parameter that controls the diameter of the identified set. The identified sets under three different values of $c$ are plotted in Figure \ref{fig:identifiedset}.

We report estimation results for two different estimators of the support function. 
Since scale normalization implicit in $\theta$ may not allow a simple interpretation of estimation results,
 we follow \cite{PowellStockStoker1989EJES} and renormalize the weighted average derivative as follows:
\begin{align}
\tilde \theta\equiv E[f(Z)]^{-1}E[f(Z)\nabla_z m(Z)].	
\end{align}
Integrating by parts, it holds that $I_\ell E[f(Z)]=E[\nabla_z Zf(Z)]=E[Zl(Z)]$, where $I_\ell$ is the identity matrix of dimension $\ell.$
Thus, $\tilde\theta$ can be rewritten as $\tilde\theta=E[l(Z)Z]^{-1}E[l(Z)m(Z)]$. Our first estimator of the support function applies this renormalization to the sample counterpart and is defined by
\begin{align}
\hat \upsilon^{IV}_n\equiv p'(\frac{1}{n}\sum_{i=1}^n \hat l_{i,h}(Z_{i})Z_i)^{-1}	\frac{1}{n}\sum_{i=1}^n \hat l_{i,h}(Z_{i})Y_{p,i}~,\label{eq:sIV}
\end{align}
where $\hat l_{i,h}$ uses a Gaussian kernel.
This estimator may be interpreted as  the inner product between $p$ and a boundary point estimated by an instrumental variable (IV) estimator, which regresses $Y_{p,i}$ on $Z_i$ using $\hat l_{i,h}$ as an instrument. Our second estimator replaces the Gaussian kernel with a higher order kernel.\footnote{Detailed description of the construction of the higher-order kernels is in \cite{PowellStockStoker1989EJES} Appendix 2.}

Tables \ref{tab:mc_spec3}-\ref{tab:mc_spec4} report the average losses of these estimators, measured in the Hausdorff distance measures: $R_H\equiv E[d_H(\hat\Theta_n,\Theta_0(P))]$,  $R_{IH}\equiv E[\vec d_H(\hat\Theta_n,\Theta_0(P))]$ and $R_{OH}\equiv E[\vec d_H(\Theta_0(P),\hat\Theta_n)]$. We call them the \textit{Hausdorff risk},  \textit{inner Hausdorff risk} and \textit{outer Hausdorff risk} respectively. The directed Hausdorff distance $\vec d_H$ is defined by $\vec d_H(A,B)\equiv \sup_{a\in A}\inf_{b\in B}\|a-b\|$, which has the property that $\vec d_H(\Theta_0(P),\hat\Theta_n)=0$ if $\Theta_0(P)\subseteq\hat\Theta_n$ but takes a positive value otherwise. Hence, $R_{OH}$ penalizes $\hat\Theta_n$ when it is a  ``small set'' that does not cover $\Theta_0(P)$. On the other hand, $R_{IH}$ penalizes $\hat\Theta_n$ when it is  a ``large set'' that does not fit inside $\Theta_0(P).$ The Hausdorff risk $R_H$ then penalizes $\hat\Theta_n$ for both types of deviations from $\Theta_0(P).$

 Table  \ref{tab:mc_spec3} reports $R_H,R_{IH}, $ and $R_{OH}$ for the first estimator under different values of $c$, $h$, and $n$. Throughout simulations, we have set $h=\tilde h$ for simplicity.
One observation is that, for any value of $n$ when $c=0.5$ or 1, $R_{IH}$ is increasing in $h$, which suggests that a larger bandwidth (oversmoothing) may introduce an outward bias to the set estimator.  This is consistent with the outer hausdorff risk $R_{OH}$ being decreasing in $h$ when identified sets are relatively large ($c=0.5,1$). However, $R_{IH}$ is not increasing in $h$ when the identified set is small ($c=0.1$) suggesting that there may be different sources of risk that could affect $R_{IH}$ in this setting. For example, even if one uses a small bandwidth and the estimated set itself $\hat\Theta_n$ is small, its location may  still be biased so that it does not stay inside $\Theta_0(P)$. The Hausdorff risk $R_H$ takes both errors into account and seems to have a well-defined minimum as a function of the bandwidth. For example, when $c=1$ and $n=1,000$, the Hausdorff risk is minimized when the bandwidth is about 0.6.

Table \ref{tab:mc_spec4} reports results for the bias-corrected (second) estimator.  Again,
for $c=0.5$ and $1$, $R_{IH}$ is increasing in $h$, and $R_{OH}$ is decreasing in $h$, which  suggests an outward bias with oversmoothing, but this tendency is not clear when the identified region is relatively small ($c=0.1$). 
 We also note that the bias correction through the higher-order kernel  improves the lowest Hausdorff risk but not in a significant manner.
In sum, the simulation results show a tradeoff between the inner and outer Hausdorff risks. The optimal bandwidth  in terms of the Hausdorff risk seems to exist, which  makes these two risks roughly of the same order.

\section{Concluding remarks}\label{sec:conclusion}
 This paper studies the identification and estimation of  weighted average derivatives in the presence of interval censoring on either an outcome or on a covariate.
We show that the identified set of average derivatives is compact and convex under general assumptions and further show that it can be represented   by its support function.  This  representation is used to characterize the semiparametric efficiency bound for estimating the identified set when the outcome variable is interval-valued. 
For  mean regression with an interval censored outcome, we  construct a semiparametrically efficient set estimator.  

For practical purposes, an  important avenue for future research is to develop a theory of optimal bandwidth choice.
 The simulation results suggest   the  Hausdorff risks vary with the choice of bandwidth. It is an open question how to trade off different types of biases (inward, outward, and shift) and variance.  Another interesting direction for future research would be to study the  higher order properties of first-order efficient estimators, which would require an
 asymptotic expansion as in \cite{NishiyamaRobinson2000E} extended to the context of interval censoring.

\bibliography{SetEst}

\clearpage
\begin{center}
{\large {\sc Supplemental Appendix}}
\end{center}

{\small
In this supplemental appendix, we include the proofs of results stated in the main text. The contents of the supplemental appendix are organized as follows. Appendix A contains notations and definitions used throughout the appendix.  Appendix B contains the proof of Theorems \ref{thm:thetabounds} and \ref{thm:thetabounds2}. Appendix C contains the proof of Theorem \ref{thm:InvCov} and auxiliary lemmas. Appendix D contains the proof of Theorem \ref{thm:estimator}. Appendix E then reports the Monte Carlo results.
}

\vspace{0.1in}
\begin{center}
{\sc {Appendix A}}: Notation and Definitions
\end{center}

\renewcommand{\thedefinition}{A.\arabic{definition}}
\renewcommand{\theequation}{A.\arabic{equation}}
\renewcommand{\thelemma}{A.\arabic{lemma}}
\renewcommand{\thecorollary}{A.\arabic{corollary}}
\renewcommand{\thetheorem}{A.\arabic{theorem}}
\setcounter{lemma}{0}
\setcounter{theorem}{0}
\setcounter{corollary}{0}
\setcounter{equation}{0}
\setcounter{remark}{0}

{\footnotesize
Let $\Pi:\mathcal X\to\mathcal Z$ be the projection map pointwise defined by $x=(y_L,y_U,z)\mapsto z$. Let $\nu=\Pi_{\#}\mu$ be the pushforward measure of $\mu$ on $\mathcal Z$. We then denote the marginal density of $P$ with respect to $\nu$ by $\phi^2_0(z)$.
 By the disintegration theorem, there exists a family $\{\mu_z:z\in\mathcal Z\}$ of probability measures on $\mathcal X$. Throughout, we assume that $\mu_z$ is absolutely continuous with respect to some $\sigma$-finite measure $\lambda$  for all $z\in\mathcal Z$. We then denote the conditional density function of $P$ with respect to $\lambda$ by $v^2_0(y_L,y_U|z)$.

For any $1\le p\le\infty$, we let $\|\cdot\|_{L^p_\pi}$ be the usual $L^p$-norm with respect to a measure $\pi$, where $\|\cdot\|_{L^\infty_\pi}$ denotes the essential supremum.
}

\vspace{0.1in}
\begin{center}
{\sc {Appendix B}}: Proof of Theorems  \ref{thm:thetabounds} and \ref{thm:thetabounds2}
\end{center}

\renewcommand{\thedefinition}{B.\arabic{definition}}
\renewcommand{\theequation}{B.\arabic{equation}}
\renewcommand{\thelemma}{B.\arabic{lemma}}
\renewcommand{\thecorollary}{B.\arabic{corollary}}
\renewcommand{\thetheorem}{B.\arabic{theorem}}
\setcounter{lemma}{0}
\setcounter{theorem}{0}
\setcounter{corollary}{0}
\setcounter{equation}{0}
\setcounter{remark}{0}

{\footnotesize
\begin{proof}[\rm Proof of Theorem \ref{thm:thetabounds}]
We first show that the  identified set can be written as
\begin{align}
	\Theta_0(P)=\{\theta:\theta=E[m(Z)l(Z)],~P(m_L(Z)\le m(Z)\le m_U(Z))=1\}.\label{eq:idset}
\end{align}
For this, we  note that, by Assumptions \ref{as:onY}-\ref{as:onP1} and arguing as in the proof of
 Theorem 1 in \cite{Stoker1986EJES}, we have
\begin{equation}
	E[w(Z)\nabla_zm(Z)]=E[m(Z)l(Z)].\label{eq:intparts1}
\end{equation}
Further,  the distribution of $Y$ first-order stochastically dominates that of $Y_L$. Similarly, the distribution of $Y_U$ first-order stochastically dominates that of $Y$. Since $q$ is nondecreasing by the convexity of $\varrho$, it then follows that, for each $u\in\mathbb R$,
	\begin{align}
E[q(Y_L-u)|Z]\le E[q(Y-u)|Z] \le E[q(Y_U-u)|Z],~P-a.s.\label{eq:focs}
	\end{align}
Eq. \eqref{eq:bounds} then follows by \eqref{eq:focs}, Assumption \ref{as:onP1} (iii), and the hypothesis that $E[q(Y-u)|Z=z]=0$ has a unique solution at $u=m(z)$ on $D$.

For the  convexity of $\Theta_0(P)$, observe that for any $\theta_1,\theta_2\in\Theta_0(P)$, there exist $m_1,m_2:\mathcal Z\to \mathbb R$ such that $\theta_j=E[m_j(Z)l(Z)]$ and $m_L(Z)\le m_j(Z)\le m_U(Z), P-a.s.$ for $j=1,2.$ Let $\alpha\in [0,1]$ and let $\theta_\alpha\equiv \alpha\theta_1+(1-\alpha)\theta_2$. Then,
\begin{align*}
	\theta_\alpha=E[m_\alpha(Z)l(Z)],
\end{align*}
where $m_\alpha\equiv \alpha m_1+(1-\alpha)m_2$. Since $m_L(Z)\le m_\alpha(Z)\le m_U(Z), P-a.s.$, it follows that $\theta_\alpha\in\Theta_0(P)$.  Therefore, $\Theta_0(P)$ is convex.

	We  show compactness of $\Theta_0(P)$ by showing $\Theta_0(P)$ is bounded and closed. By Assumption  \ref{as:onP1} (i)-(ii), for any $\theta\in\Theta_0(P)$,
	\begin{align}
|\theta^{(j)}|\le \sup_{z\in \mathcal Z}|m(z)|E[|l^{(j)}(Z)|]\le \sup_{x\in D}|x|E[|l^{(j)}(Z)|]  <\infty,~\text{ for }j=1,\cdots, \ell.\label{eq:thetabounds1}
	\end{align}
 Hence, $\Theta_0(P)$ is bounded. To see that	 $\Theta_0(P)$ is closed, consider  the following maximization problem:
	\begin{align}
	\text{maximize}&~~~ E[m(Z)p'l(Z)],\label{eq:supportfunc1}\\
	\text{s.t.}&~~~m_L(Z)\le m(Z)\le m_U(Z), P-a.s.	
	\end{align}
Arguing as in the proof of Proposition 2 in \cite{BontempsMagnacMaurin2012E}, the objective function is maximized by setting $m(z)=m_L(z)$ when $p'l(z)\le 0$ and setting $m(z)=m_U(z)$ otherwise. This and \eqref{eq:intparts1} give the support function of $\Theta_0(P)$ in \eqref{eq:suppfunc} and also shows that, for each $p\in\mathbb S$, there exists $m_p(z)\equiv 1\{p'l(z)\le 0\}m_L(z)+1\{p'l(z)>0\}m_U(z)$ such that $\upsilon(p,\Theta_0(P))=\langle p,\theta^*(p)\rangle$, where $\theta^*(p)=E[m_p(Z)l(Z)]$. 
	Since $m_p$ satisfies $m_L(Z)\le m_p(Z)\le m_U(Z),P-a.s.,$ we have $\theta^*(p)\in \Theta_0(P)$.
	By Proposition 8.29 (a) in \cite{RockafellarWets2005}, the boundary of $\Theta_0(P)$ is $\{\tilde\theta:\langle p,\tilde\theta\rangle=\upsilon(p,\Theta_0(P)),p\in\mathbb S^\ell\}$.
	Therefore, $\Theta_0(P)$ contains its boundary, and hence it is closed. 

For the strict convexity of $\Theta_0(P)$, we show it through the differentiability of the support function. The proof is similar to that of Lemma A.8 in \cite{BeresteanuMolinari2008E} and Lemma 23 in \cite{BontempsMagnacMaurin2012E}. To this end, we extend the support function and define  $s(p,\Theta_0(P))$ as in \eqref{eq:suppfunc} for each $p\in\mathbb R^\ell\setminus\{0\}$.

For each $z\in\mathcal Z$, let $\xi(z)\equiv (m_L(z)-m_U(z))l(z)$. For each $p\in\mathbb R^\ell\setminus\{0\}$, let
$\zeta(p)\equiv E[1\{p'\xi(Z)\ge 0\}p'\xi(Z)]$. Then, since $m_L(Z)-m_U(Z)\le 0$ almost surely, it holds that $\upsilon(p,\Theta_0(P))=\zeta(p)+E[m_U(Z)p'l(Z)]$ for all $p\in\mathbb R^\ell\setminus\{0\}$. For any $p,q\in\mathbb R^\ell\setminus\{0\}$, it then follows by the Cauchy-Shwarz inequality that
\begin{multline}
|\zeta(q)-\zeta(p)-(q-p)'E[\xi(Z)1\{p'\xi(Z)\ge 0\}]|	\\=|E[(1\{q'\xi(Z)\ge 0\}-1\{p'\xi(Z)\ge 0\})q'\xi(Z)]|
\le \|1\{q'\xi\ge 0\}-1\{p'\xi\ge 0\}\|_{L^2_{P}}\|q'\xi\|_{L^2_{P}}.\label{eq:supportfunc3}
\end{multline}
By Assumptions \ref{as:onY} (i), the distribution of $\xi(Z)$ does not assign a  positive measure to any proper subspace of $\mathbb R^\ell$ with dimension $\ell-1$, which ensures $P(p'\xi(Z)=0)=0$. Thus,
for any sequence $\{q_n\}$ such that $q_n\to p$, it follows that $1\{q_n'\xi(Z)\ge 0\}\stackrel{a.s.}{\to}1\{p'\xi(Z)\ge 0\}$ as $n\to\infty$.   Note that $1\{p'\xi(Z)\}$ is bounded for all $p$. Thus, the function class  $\{1^2\{p'\xi(\cdot)\}:p\in\mathbb R^\ell\setminus\{0\}\}$ is uniformly integrable.  These results ensure that $ \|1\{q'\xi\ge 0\}-1\{p'\xi\ge 0\}\|_{L^2_{P}}\to 0$ as $q\to p$. This and $\|q'\xi\|_{L^2_{P}}<\infty$ imply that the right hand side of \eqref{eq:supportfunc3} is $o(1)$.
Hence, $\zeta$ is differentiable at every point on $\mathbb R^\ell\setminus\{0\}$ with the derivative $E[\xi(Z)1\{p'\xi(Z)\ge 0\}]$. Note that, for each $z\in\mathcal Z$, $p\mapsto m_U(z)p'l(z)$ is differentiable with respect to $p$ and $m_Ul$ is integrable with respect to $P$ by Assumption \ref{as:onP1} (i)-(ii). This ensures that $p\mapsto E[m_U(Z)p'l(Z)]$ is differentiable with respect to $p$ at every $p\in\mathbb R\setminus\{0\}$. Therefore, the map $p\mapsto \upsilon(p,\Theta_0(P))$ is differentiable for all $p\in\mathbb R^\ell\setminus\{0\}$.
By Corollary 1.7.3 in \cite{Schneider1993}, the support set $H(p,\Theta_0(P))\equiv\{\theta:\langle p,\theta\rangle=\upsilon(p,\Theta_0(P))\}\cap \Theta_0(P)$ for each $p$ then contains only one point, which ensures the strict convexity of $\Theta_0(P)$. 

Too see that $\Theta_0(P)$ is sharp, take any $\theta\in\Theta_0(P)$. Then, by convexity, there exist $p,q\in\mathbb S^\ell$ and $\alpha\in[0,1]$ such that $\theta=\alpha\theta^*(p)+(1-\alpha)\theta^*(q)$, which further implies
\begin{align}
\theta=E[(\alpha m_p(Z)+(1-\alpha)m_q(Z))l(Z)]=E[w(Z)\nabla_z m_{\alpha,p,q}(Z)],
\end{align}
where $m_{\alpha,p,q}\equiv\alpha m_p+(1-\alpha)m_q$, and the last equality follows from integration by parts and Assumptions \ref{as:onY} (i) and \ref{as:onP1} (iv) ensuring the almost everywhere differentiability of $m_{\alpha,p,q}$. Since $m_{\alpha,p,q}$  satisfies \eqref{eq:bounds} in place of $m$ with $m_{\alpha,p,q}$ and $m_{\alpha,p,q}$ is almost everywhere differentiable, $\theta$ is the weighted average derivative of a regression function consistent with some data generating process. Hence, $\Theta_0(P)$ is sharp.
\end{proof}

\begin{proof}[\rm Proof of Theorem \ref{thm:thetabounds2}]
		We first show \eqref{eq:intbounds}. 
	By the first order condition for \eqref{eq:intcovopt}  and Assumption \ref{as:mon} (ii),  $E[q(Y-g(Z,V))|Z,V]=E[q(Y-g(Z,V))|\tilde Z,V]=0,$ $P-a.s.$ 
	By Assumption \ref{as:mon} (i) and the monotonicity of $q$,  for $v_L\le v\le v_U$, we have
	\begin{align}
	\int q(y-g(z, v_U))dP(y|\tilde z,v)\le 0\le \int q(y-g(z, v_L))dP(y|\tilde z,v), ~P-a.s.	\label{eq:intbounds2}	
	\end{align}
	Taking expectations with respect to $V$, we obtain
	\begin{align}
	\int q(y-g(z, v_U))dP(y|\tilde z)\le 0\le \int q(y-g(z, v_L))dP(y|\tilde z),~P-a.s.	\label{eq:intbounds2a}	
	\end{align}	
	Further,  by Assumption \ref{as:onmC} (ii), 
	\begin{align}
\int q(y-\gamma(z,v_L,v_U))dP(y|\tilde z)=0.\label{eq:intbounds1}
	\end{align}
	By  \eqref{eq:intbounds2a}-\eqref{eq:intbounds1} and the monotonicity of $q$, we then have
	\begin{align}
	g(z,v_L)\le\gamma(z,v_L,v_U)\le g(z,v_U).\label{eq:intbounds3}
	\end{align} 
	Let $\Xi_L(v)\equiv\{(v_L,v_U):v_L\le v_U\le v\}$ and $\Xi_U(v)\equiv\{(v_L,v_U):v\le v_L\le v_U\}$.
	To prove the lower bound on $g(z,v),$ take any $v_U\le v$.
	Then by Assumption \ref{as:mon} (i) and \eqref{eq:intbounds3}, we have $\gamma(z,v_L,v_U)\le g(z,v)$ for any $(v_L,v_U)\in\Xi(v)$.
Hence, it follows that	  $g_L(z,v)\equiv\sup_{(v_L,v_U)\in \Xi_L(v)}\gamma(z,v_L,v_U)\le g(z,v)$. Note that $g_L(z,v)$ is weakly increasing in $v$ by construction and differentiable in $z$ with a bounded derivative by Assumption \ref{as:onmC} (iii). Hence, it is consistent with Assumption \ref{as:mon} (i). Thus, the bound is sharp. A similar argument gives the upper bound. Hence, \eqref{eq:intbounds} holds. 
This and integration by parts imply that the sharp identified set can be written as
\begin{align}
		\Theta_{0,v}(P)=\{\theta:\theta=E[g(Z,v)l(Z)],~P(g_L(Z,v)\le g(Z,v)\le g_U(Z,v))=1\}.\label{eq:idsetv}
\end{align}	
The rest of the proof is then similar to that of Theorem \ref{thm:thetabounds}. It is therefore omitted.
\end{proof}
}

\vspace{0.1in}
\begin{center}
{\sc {Appendix C}}: Proof of Theorem \ref{thm:InvCov}
\end{center}

\renewcommand{\thedefinition}{C.\arabic{definition}}
\renewcommand{\theequation}{C.\arabic{equation}}
\renewcommand{\thelemma}{C.\arabic{lemma}}
\renewcommand{\thecorollary}{C.\arabic{corollary}}
\renewcommand{\thetheorem}{C.\arabic{theorem}}
\setcounter{lemma}{0}
\setcounter{theorem}{0}
\setcounter{corollary}{0}
\setcounter{equation}{0}
\setcounter{remark}{0}

{\footnotesize
This Appendix contains the proof of Theorem \ref{thm:InvCov} and auxiliary lemmas needed to establish the main result.

To characterize the efficiency bound, it proves useful to study a parametric submodel of $\mathbf P$. We define a parametric submodel through a curve in $L^2_\mu$.
Let $h_0\equiv \sqrt{dP/d\mu}.$ 
Let $\tilde v:\mathcal X\to\mathbb R$ and $\tilde \phi:\mathcal Z\to \mathbb R$ be bounded functions that are continuously differentiable in $z$ with bounded derivatives.
We then define
\begin{align}
	\bar v(x)\equiv\tilde v(x)-E[\tilde v(X)|Z=z], ~~~\text{ and }~~~\bar\phi(z)\equiv\tilde\phi(z)-E[\tilde\phi(Z)],
\end{align}
where expectations are with respect to $P$. For each $\eta\in\mathbb R$, define $v_\eta:\mathcal X\to\mathbb R$ and $\phi_\eta:\mathcal Z\to\mathbb R$ by
	\begin{align}
v_\eta^2(y_L,y_U|z)=v_0^2(y_L,y_U|z)(1+2\eta\bar v(x)),~~~\text{ and }~~~\phi_\eta^2(z)=\phi_0^2(z)(1+2\eta\bar\phi(z)).\label{eq:tangent3}
	\end{align}
We then let $h^2_\eta$ be defined pointwise by
\begin{align}
	h^2_\eta(x)\equiv v_\eta^2(y_L,y_U|z)\phi_\eta^2(z)\label{eq:heta}.
\end{align}
	It is straightforward to show that $\eta\mapsto h^2_\eta$ is a curve in $L^2_{\mu}$ with $\dot h_0=\dot v_0\phi_0+v_0\dot\phi_0$, where $\dot v_0(y_L,y_U|z)\equiv\bar v(x) v_0(y_L,y_U|z)$ and $\dot\phi_0(z)=\bar\phi(z)\phi_0(z)$. 
We also note that for any $\eta$ and $\eta_0$ in a neighborhood of 0, it holds that
		\begin{align}
	v_\eta^2(y_L,y_U|z)=v_{\eta_0}^2(y_L,y_U|z)(1+2(\eta-\eta_0)\bar v_{\eta_0}(x)),~~~\text{ and }~~~\phi_\eta^2(z)=\phi_{\eta_0}^2(z)(1+2\eta\bar\phi_{\eta_0}(z)).\label{eq:expectation}
		\end{align}
	where $\bar v_{\eta_0}=\bar vv^2_0/v^2_{\eta_0}$ and $\bar\phi_{\eta_0}=\bar \phi \phi^2_0/\phi^2_{\eta_0}.$
We then define	$\dot v_{\eta_0}(y_L,y_U|z)=\bar v_{\eta_0}(x)v_{\eta_0}(y_L,y_U|z)$ and $\dot\phi_{\eta_0}(z)=\bar\phi_{\eta_0}(z)\phi_{\eta_0}(z)$.
	
We further introduce notation for population objects along this curve.	
 Let $f_\eta(z)\equiv \phi_\eta^2(z)$ and $l_\eta\equiv-\nabla_zw(z)-w(z)\nabla_zf_{\eta}(z)/f_\eta(z)$.	Lemma \ref{lem:onm} will show that 
there exists a neighborhood $N$ of 0 such that  the equations $\int q(y_L-\tilde m)v^2_\eta(y_L,y_U|z)d\lambda(y_L,y_U)=0$ and $\int q(y_U-\tilde m)v^2_\eta(y_L,y_U|z)d\lambda(y_L,y_U)=0$ have unique solutions  on $D$ for all $\eta\in N$. We denote these solutions by $m_{L,\eta}$ and $m_{U,\eta}$ respectively. We then let $m_{p,\eta}(z)\equiv \Gamma(m_{L,\eta}(z),m_{U,\eta}(z),p'l_\eta(z))$. Further, we define
\begin{align}
	r_{j,\eta}(z)&\equiv-\frac{d}{d\tilde m}E_\eta\left[q(y_j-\tilde m)|Z=z\right]\big|_{\tilde m=m_{j,\eta}(z)},~j=L,U,	\label{eq:reta}
\end{align}
where expectations are taken with respect to $P_\eta$.
Finally, define
\begin{align}
\zeta_{p,\eta}\equiv\Gamma(r_{L,\eta}^{-1}(z) q(y_L-m_{L,\eta}(z)), r_{U,\eta}^{-1}(z) q(y_U-m_{U,\eta}(z)),p'l_\eta(z)).\label{eq:zetaeta}	
\end{align}

Given these definitions, we give an outline of the general structure of the proof. The proof of Theorem \ref{thm:InvCov} proceeds by verifying the conditions of Theorem 5.2.1 in \cite{BickelKlassenRitov1993}, which requires (i) the characterization of the tangent space at $P$, which we accomplish in Theorem \ref{thm:tangent} and (ii) the pathwise weak differentiability of the map $Q\mapsto\upsilon(\cdot,\Theta_0(Q))$, which is established by Theorem \ref{thm:pathdiff}. 

\noindent {\sc Tangent Space} (Theorem \ref{thm:tangent})

{\it Step 1:} Lemmas \ref{lem:onm}-\ref{lem:onmLmU} show that for some neighborhood $N$ of 0, Assumptions \ref{as:onP1} and \ref{as:onP2} hold with $P_\eta$ in place of $P$ for all $\eta\in N$, where $\sqrt{dP_\eta/d\mu}=h_\eta$ defined in \eqref{eq:tangent3}-\eqref{eq:heta}. This means that the restrictions on $P$ in Assumptions \ref{as:onP1} and \ref{as:onP2} do not restrict the neighborhood in such a way that affects the tangent space derived in the next step. In this step, we exploit the fact that $\mathcal Z$ is determined by the dominating measure $\mu$ instead of each distribution in the model. 

{\it Step 2:} Theorem \ref{thm:tangent} then establishes that the tangent space $\dot {\mathbf S}$ equals $\mathbf T\equiv\{h\in L^2_\mu:\int h(x)s(x)d\mu(x)=0\}$ by showing that (i) $\dot{\mathbf S}\subseteq\mathbf T$ generally and (ii) due to Step 1,  $\{P_\eta,\eta\in N\}$ is a regular parametric submodel of $\mathbf P$ whose tangent set $\dot{\mathbf U}\subset\dot{\mathbf S}$ is dense in $\mathbf T$ implying $\mathbf T\subseteq \dot{\mathbf S}$.   

\noindent {\sc Differentiability} (Theorem \ref{thm:pathdiff})

{\it Step 1:} Lemmas \ref{lem:onm} and \ref{lem:dmlu} explicitly characterize the pathwise derivatives of $m_{j,\eta},j=L,U$ along the curve $\eta\mapsto h_\eta$ defined in \eqref{eq:tangent3}-\eqref{eq:heta}.

{\it Step 2:} Based on Step 1 and Lemma \ref{lem:intermediate}, Lemma \ref{lem:pathdiff_ptws} then characterizes the pathwise derivative of the support function $\upsilon(p,\Theta_0(P_\eta))$ at a point $p$ along the curve $\eta\mapsto h_\eta$ defined in \eqref{eq:tangent3}-\eqref{eq:heta}. Lemmas \ref{lem:bdd} and \ref{lem:cts} further show that this pathwise derivative  is uniformly bounded and continuous in $(p,\eta)\in\mathbb S^\ell\times N$.

{\it Step 3:} Based on Step 2, Theorem \ref{thm:pathdiff} first characterizes the  pathwise weak derivative of $\rho(P_\eta)=\upsilon(\cdot,\Theta_0(P_\eta))$ on the tangent space of the curve $\eta\mapsto h_\eta$ and further extends it to $\dot{\mathbf S}$.

\begin{lemma}\label{lem:onm}
Let $\eta\mapsto h_\eta$ be a curve in $L^2_\mu$ defined in \eqref{eq:tangent3}-\eqref{eq:heta}. Suppose Assumption \ref{as:onY} holds. Suppose $P\in\mathbf P$. Then, there exists a neighborhood $N$ of 0 such that (i) $\int q(y_j-\tilde m)v^2_\eta(y_L,y_U|z)d\lambda(y_L,y_U)=0$  has a unique solution at $\tilde m=m_{j,\eta}(z)$  on $D$ for $j=L,U$ and for all $\eta\in N$;	(ii) For each $(z,\eta)\in \mathcal Z\times N$, $m_{\eta,L}$ and $ m_{\eta,U}$ are continuously differentiable $a.e.$ on the interior of $\mathcal Z\times N$ with bounded derivative. In particular, it holds that 
\begin{align}
		\frac{\partial}{\partial \eta}m_{L,\eta}(z)\Big|_{\eta=\eta_0}&=2r_{L,\eta_0}^{-1}(z)\int q(y_L-m_{L,\eta_0}(z))\dot v_{\eta_0}(y_L,y_U|z)v_{\eta_0}(y_L,y_U|z)d\lambda(y_L,y_U)\\
		\frac{\partial}{\partial \eta}m_{U,\eta}(z)\Big|_{\eta=\eta_0}&=2r_{U,\eta_0}^{-1}(z)\int q(y_U-m_{U,\eta_0}(z))\dot v_{\eta_0}(y_L,y_U|z)v_{\eta_0}(y_L,y_U|z)d\lambda(y_L,y_U),
\end{align}
for all $\eta_0\in N$.
\end{lemma}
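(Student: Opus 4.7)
The plan is to view part (i) as an application of the implicit function theorem (IFT) to the map $F_j: D \times \mathcal Z \times N \to \mathbb R$ defined by
\[
F_j(\tilde m, z, \eta) \equiv \int q(y_j - \tilde m)\, v_\eta^2(y_L, y_U \mid z)\, d\lambda(y_L, y_U),
\]
for $j = L, U$, and then to read off the formulas in part (ii) by implicit differentiation of the identity $F_j(m_{j,\eta}(z), z, \eta) = 0$.

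First I would verify the IFT hypotheses at $(\tilde m, z, \eta) = (m_j(z), z, 0)$. Because $v_\eta^2 = v_0^2(1 + 2\eta\bar v)$ is affine in $\eta$ with $\bar v$ bounded, $F_j$ is continuously differentiable in $\eta$; joint differentiability in $(\tilde m, z)$ at $\eta = 0$ is Assumption \ref{as:onP2} (iii) applied with $\varphi \equiv 1$. Under $P$, $F_j(m_j(z), z, 0) = 0$ by Assumption \ref{as:onP1} (iii), and the partial $\partial_{\tilde m} F_j(m_j(z), z, 0) = -r_j(z)$ is bounded away from zero by Assumption \ref{as:onP2} (i). The IFT then delivers a locally unique, $C^1$ solution $m_{j,\eta}(z)$ on a neighborhood of each $(z, 0)$; compactness of $\mathcal Z$ (Assumption \ref{as:onY} (i)), the uniform lower bound $|r_j(z)| > \bar\epsilon$, and joint continuity of the relevant partial derivatives let me take a single neighborhood $N$ of $0$ (independent of $z$) on which $m_{j,\eta}$ is well-defined and jointly $C^1$ on $\mathcal Z \times N$.

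To promote this local uniqueness to uniqueness on all of $D$, I would exploit that $q$ is nondecreasing by convexity of $\varrho$ (Assumption \ref{as:onY} (ii)), so $F_j(\cdot, z, \eta)$ is nonincreasing in $\tilde m$ whenever $v_\eta^2 \geq 0$; this holds for $|\eta|$ small since $\bar v$ is bounded. Because $m_j(z) \in \mathrm{int}(D)$ is the unique zero at $\eta = 0$, the values $F_j(\inf D, z, 0) > 0$ and $F_j(\sup D, z, 0) < 0$ are bounded away from $0$ uniformly in $z \in \mathcal Z$ by compactness together with the continuity supplied by Assumption \ref{as:onP2} (iii). The perturbation satisfies $|F_j(\tilde m, z, \eta) - F_j(\tilde m, z, 0)| \le 2|\eta|\, \|\bar v\|_\infty \|q\|_\infty$ uniformly in $(\tilde m, z)$, so shrinking $N$ preserves the boundary signs; combined with monotonicity this forces exactly one zero in $D$, which must coincide with the one produced by the IFT.

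The derivative formula in (ii) follows by implicit differentiation. Applying the chain rule to $F_j(m_{j,\eta}(z), z, \eta) = 0$ at $\eta_0 \in N$ yields
\[
-r_{j,\eta_0}(z)\, \tfrac{\partial}{\partial \eta} m_{j,\eta}(z)\Big|_{\eta_0} + \partial_\eta F_j(m_{j,\eta_0}(z), z, \eta_0) = 0,
\]
and rewriting $v_\eta^2$ around $\eta_0$ via \eqref{eq:expectation} gives $\partial_\eta F_j|_{\eta_0} = 2 \int q(y_j - m_{j,\eta_0}(z))\, \dot v_{\eta_0}(y_L, y_U \mid z)\, v_{\eta_0}(y_L, y_U \mid z)\, d\lambda(y_L, y_U)$, which after dividing through produces the stated expression. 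Joint continuity of $r_{j,\eta}(z)$ in $(z,\eta)$, $|r_j(z)| > \bar\epsilon$, and compactness of $\mathcal Z$ let me shrink $N$ further so that $|r_{j,\eta}^{-1}(z)| \le 2/\bar\epsilon$ on $\mathcal Z \times N$, after which uniform boundedness of the derivative is immediate from boundedness of $q$ and $\bar v$. The main obstacle is the upgrade from the IFT's purely local uniqueness to uniqueness on the whole set $D$; that step is where the monotonicity of $q$ and the quantitative boundary-sign control (rather than just pointwise smoothness) do the real work.
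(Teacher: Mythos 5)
Your proof is correct and follows essentially the same route as the paper's: both hinge on the implicit function theorem applied to $F_j(\tilde m,z,\eta)$, with Assumption \ref{as:onP2} supplying the joint continuous differentiability and the nonvanishing partial $-r_{j,\eta}(z)$, compactness of $\mathcal Z$ yielding a single uniform neighborhood $N$, and implicit differentiation of $F_j(m_{j,\eta}(z),z,\eta)=0$ producing the stated derivative formulas. Your monotonicity-plus-boundary-sign argument for uniqueness of the zero on all of $D$ is in fact more explicit than the paper's, which disposes of that step with a terse ``by continuity'' remark.
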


\begin{proof}[\rm Proof of Lemma \ref{lem:onm}]
	The proof builds on the proof of Theorem 3.1 in \cite{NeweyStoker1993EJES}.
By Eq. \eqref{eq:tangent3}, it follows that
	\begin{multline}
\int q(y_L-\tilde m)v^2_\eta(y_L,y_U|z)d\lambda(y_L,y_U)=\int q(y_L-\tilde m)v^2_0(y_L,y_U|z)d\lambda(y_L,y_U)\\
+2\eta\int q(y_L-\tilde m)\bar v(x)v^2_0(y_L,y_U|z)d\lambda(y_L,y_U).\label{eq:tangent4a}
	\end{multline}
Since $P\in\mathbf P$, Assumption \ref{as:onP2} and Lemma C.2 in \cite{Newey1991NSMES} imply that $\int \tilde v(x)v^2_0(y_L,y_U|z)d\lambda(y_L,y_U)$ is continuously differentiable with bounded derivatives, and hence so is $\bar v.$ Therefore, $\int q(y_j-\tilde m)\bar v(x)v^2_0(y_L,y_U|z)d\lambda(y_L,y_U)$ is continuously differentiable in $(z,\tilde m)$ on $\mathcal Z\times D$ for $j=L,U$. Hence, by  \eqref{eq:tangent4a}, there is a neighborhood $N'$ of 0 such that the map 
		 $(z,\tilde m,\eta)\mapsto \int q(y_L-\tilde m)v^2_\eta(y_L,y_U|z)d\lambda(y_L,y_U)$ is continuously differentiable on $\mathcal Z\times D\times N'$  with bounded derivatives. By continuity, we may take a neighborhood $N$ of 0 small enough so that $\int q(y_L-\tilde m)v^2_\eta(y_L,y_U|z)d\lambda(y_L,y_U)=0$ admits a unique solution $m_{L,\eta}(z)$ for all $\eta\in N$.	
	 A similar argument can be made for  $m_{U,\eta}$. 
	
	By the implicit function theorem, there is a neighborhood of $(z,0)$ on which $\nabla_z m_{j,\eta_0}$ and $\frac{\partial}{\partial \eta}m_{j,\eta}(z)|_{\eta=\eta_0}$ exist and are continuous in their arguments on that neighborhood for $j=L,U$. By the compactness of $\mathcal Z$, $N$ can be chosen small enough so that $\nabla_zm_{\eta,L}$  and $\frac{\partial}{\partial \eta}m_{j,\eta}(z)|_{\eta=\eta_0}$ are continuous and bounded on $\mathcal Z\times N$ and
	\begin{align}
		\frac{\partial}{\partial \eta}m_{j,\eta}(z)\Big|_{\eta=\eta_0}=2r_{j,\eta_0}^{-1}(z)\int q(y_j-m_{j,\eta_0}(z))\dot v_{\eta_0}(y_L,y_U|z)v_{\eta_0}(y_L,y_U|z)d\lambda(y_L,y_U),~ j=L,U.\label{eq:dml}
	\end{align}
This completes the proof of the lemma.
\end{proof}

\begin{lemma}\label{lem:onw}
Let $\eta\mapsto h_\eta$ be a curve in $L^2_\mu$ defined in \eqref{eq:tangent3}-\eqref{eq:heta}. Suppose Assumption \ref{as:onY} (i) holds. Suppose further that $P\in\mathbf P.$ Then, there exists a neighborhood $N$ of 0 such that the conditional support of $(Y_L,Y_U)$ given $Z$  is in $D^o\times D^o$, $w(z)f_\eta(z)=0$ on $\partial \mathcal Z$, $P_\eta-a.s.,$ $\nabla_z f_\eta/f_\eta(z)$ is continuous $a.e.$, and $\int \|l_\eta(z)\|^2\phi^2_\eta(z)d\nu(z)<\infty$ for all $\eta\in N$.
\end{lemma}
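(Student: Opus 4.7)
The plan is to exploit the explicit multiplicative form of the perturbed densities $v_\eta^2 = v_0^2(1+2\eta\bar v)$ and $\phi_\eta^2 = \phi_0^2(1+2\eta\bar\phi)$ to transfer each of the four regularity properties from $P$ to $P_\eta$ uniformly in $\eta$ on some neighborhood $N$ of $0$. Since $\tilde v$ and $\tilde\phi$ are bounded by hypothesis, the centered versions $\bar v = \tilde v - E[\tilde v(X)\mid Z]$ and $\bar\phi = \tilde\phi - E[\tilde\phi(Z)]$ are bounded as well, so I would choose $N$ small enough that both $1+2\eta\bar v$ and $1+2\eta\bar\phi$ lie between $1/2$ and $3/2$ uniformly on $\mathcal X$ and $\mathcal Z$ respectively. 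This $N$ is then fixed and used to verify the four claims in turn.

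The first two claims follow immediately from this positivity: on $N$ the set where $v_\eta^2(\cdot\mid z)>0$ coincides with the set where $v_0^2(\cdot\mid z)>0$, so the conditional support of $(Y_L,Y_U)$ under $P_\eta$ stays in $D^o\times D^o$ by Assumption \ref{as:onP1}(i); and $w(z)f_\eta(z) = w(z)f_0(z)(1+2\eta\bar\phi(z))$ vanishes on $\partial\mathcal Z$ because $w f_0$ does by Assumption \ref{as:onP1}(ii).

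The continuity and moment claims rest on the logarithmic-derivative identity
\begin{equation*}
\frac{\nabla_z f_\eta(z)}{f_\eta(z)} \;=\; \frac{\nabla_z f_0(z)}{f_0(z)} \;+\; \frac{2\eta\,\nabla_z\bar\phi(z)}{1+2\eta\bar\phi(z)},
\end{equation*}
which is continuous $a.e.$ because the first summand is continuous by Assumption \ref{as:onP1}(ii), the numerator $\nabla_z\bar\phi=\nabla_z\tilde\phi$ is continuous and bounded by hypothesis on $\tilde\phi$, and the denominator is bounded away from zero on $N$. Substituting into the definition of $l_\eta$ yields $l_\eta = l_0 - 2\eta\, w\,\nabla_z\bar\phi/(1+2\eta\bar\phi)$, and boundedness of $w$ (Assumption \ref{as:onw}), of $\nabla_z\bar\phi$, and of $1/(1+2\eta\bar\phi)$ gives a pointwise bound $\|l_\eta(z)\|^2 \le 2\|l_0(z)\|^2 + C$ with $C$ independent of $(\eta,z)$. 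Combined with $\phi_\eta^2 \le \tfrac{3}{2}\phi_0^2$ on $N$ and the hypothesis $E[\|l(Z)\|^2]<\infty$ from Assumption \ref{as:onP1}(ii), I conclude $\int \|l_\eta(z)\|^2\phi_\eta^2(z)\,d\nu(z) < \infty$.

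I do not anticipate a significant obstacle: the perturbation is multiplicative and bounded away from $0$ and $\infty$ on $N$, so every regularity property of interest is preserved up to an explicit bounded correction. The only care required is in selecting a single $N$ on which all of the above positivity, boundedness, and continuity bounds hold simultaneously, which is immediate from the compactness of $\mathcal Z$ together with the common boundedness of $\bar v$ and $\bar\phi$ and the continuity of $\nabla_z\tilde\phi$.
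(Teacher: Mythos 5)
Your proof is correct and follows essentially the same route as the paper: the same one-step inclusion of supports from the multiplicative form of $v_\eta^2$, the same factorization $w f_\eta = w f_0(1+2\eta\bar\phi)$, and the same logarithmic-derivative identity for $\nabla_z f_\eta/f_\eta$. The only (harmless) difference is in the final moment bound, where the paper deduces that $(\eta,z)\mapsto\|l_\eta(z)\|^2$ is continuous and hence attains a finite maximum on the compact set $N\times\mathcal Z$, whereas you bound $\|l_\eta\|^2\le 2\|l_0\|^2+C$ pointwise and appeal directly to $E[\|l(Z)\|^2]<\infty$; both are valid, and yours has the minor advantage of not needing $l_0$ itself to be bounded.
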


\begin{proof}[\rm Proof of Lemma \ref{lem:onw}]
By \eqref{eq:tangent3},   $\{(y_L,y_U):v^2_0(y_L,y_U|z)=0\}\subseteq \{(y_L,y_U):v^2_\eta(y_L,y_U|z)=0\}$ for all $z\in\mathcal Z$ and $\eta\in\mathbb R$. This  implies $\{(y_L,y_U):v^2_\eta(y_L,y_U|z)>0\}\subseteq \{(y_L,y_U):v^2_0(y_L,y_U|z)>0\}\subset D^o\times D^o$ for all $z\in\mathcal Z$ and $\eta\in\mathbb R$, where the last inclusion holds by Assumption \ref{as:onP1}. This establishes the first claim.
Similarly, the second claim follows immediately from Eq. \eqref{eq:tangent3} and Assumption \ref{as:onP1} (ii). 

For the third claim, using Eq. \eqref{eq:tangent3}, we write
		\begin{align}
	\frac{\nabla_z f_\eta(z)}{f_\eta(z)}=\frac{\nabla_z f(z)}{f(z)}+\frac{2\eta\nabla_z\bar\phi(z)}{1+2\eta\bar\phi(z)}.\label{eq:tangent5}
		\end{align}
By Assumption \ref{as:onP1} (ii), \eqref{eq:tangent5}, and $\bar\phi$ being bounded and continuously differentiable in $z$, 
 $(\eta,z)\mapsto\nabla_z f_\eta(z)/f_\eta(z)$ is continuous. 
This and Assumption \ref{as:onw} in turn imply that the map $(\eta,z)\mapsto \|l_\eta(z)\|^2$ is continuous. Hence, by Assumption \ref{as:onY} (i), it achieves a finite maximum on   $N\times\mathcal Z$ for some $N$ small enough. Therefore, $\int\|l_\eta(z)\|\phi^2_\eta(z)d\nu(z)<\infty$ for all $\eta\in N$.
\end{proof}

\begin{lemma}\label{lem:ons}
Let $\eta\mapsto h_\eta$ be a curve in $L^2_\mu$ defined in \eqref{eq:tangent3}-\eqref{eq:heta}. Suppose further that $P\in\mathbf P$. Then, there exists a neighborhood $N$ of 0 such that $|r_{L,\eta}(z)|>\bar\epsilon$ and $|r_{U,\eta}(z)|>\bar\epsilon$, for all $z\in\mathcal Z$ and $\eta\in N$.
\end{lemma}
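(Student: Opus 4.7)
The plan is to show that $(z,\eta)\mapsto r_{j,\eta}(z)$ is jointly continuous on $\mathcal Z\times N'$ for some neighborhood $N'$ of $0$, and then exploit the compactness of $\mathcal Z$ together with the strict inequality $|r_{j,0}(z)|>\bar\epsilon$ from Assumption \ref{as:onP2} (i) to obtain a uniform slack that is preserved under a sufficiently small perturbation. I would first fix $j\in\{L,U\}$ and introduce the auxiliary map
\begin{align*}
g_\eta(z,\tilde m)\equiv -\frac{d}{d\tilde m}\int q(y_j-\tilde m)\,v_\eta^2(y_L,y_U\mid z)\,d\lambda(y_L,y_U),
\end{align*}
so that $r_{j,\eta}(z)=g_\eta\bigl(z,m_{j,\eta}(z)\bigr)$. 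Using the factorization \eqref{eq:tangent3} of $v_\eta^2$ one finds
\begin{align*}
g_\eta(z,\tilde m)=g_0(z,\tilde m)-2\eta\,\partial_{\tilde m}E\bigl[q(Y_j-\tilde m)\bar v(X)\mid Z=z\bigr].
\end{align*}

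Since $\bar v$ is bounded and continuously differentiable in $z$ with bounded derivatives (the defining construction in Appendix C combined with Assumption \ref{as:onP2} (ii) applied to $\tilde v$), Assumption \ref{as:onP2} (iii) with $\varphi=\bar v$ implies that the partial derivative above is bounded uniformly on $\mathcal Z\times D$ by some constant $C$, giving $\sup_{(z,\tilde m)\in\mathcal Z\times D}|g_\eta(z,\tilde m)-g_0(z,\tilde m)|\le 2|\eta|C$. Moreover, Assumption \ref{as:onP2} (iii) with $\varphi\equiv 1$ yields that $g_0$ is continuous on the compact set $\mathcal Z\times D$, hence uniformly continuous with modulus $\omega$. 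By Lemma \ref{lem:onm}, $(z,\eta)\mapsto m_{j,\eta}(z)$ is continuous on $\mathcal Z\times N$, and therefore uniformly continuous on any compact neighborhood $N'\subseteq N$ of $0$; in particular $\sup_{z\in\mathcal Z}|m_{j,\eta}(z)-m_{j,0}(z)|\to 0$ as $\eta\to 0$. A standard add-and-subtract then gives
\begin{align*}
\sup_{z\in\mathcal Z}|r_{j,\eta}(z)-r_{j,0}(z)|
&\le \sup_{(z,\tilde m)\in\mathcal Z\times D}|g_\eta(z,\tilde m)-g_0(z,\tilde m)| + \omega\Bigl(\sup_{z\in\mathcal Z}|m_{j,\eta}(z)-m_{j,0}(z)|\Bigr)\\
&\le 2|\eta|C+\omega\Bigl(\sup_{z\in\mathcal Z}|m_{j,\eta}(z)-m_{j,0}(z)|\Bigr)\longrightarrow 0
\end{align*}
as $\eta\to 0$.

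To conclude, note that $r_{j,0}=g_0(\cdot,m_{j,0}(\cdot))$ is continuous on $\mathcal Z$ (as the composition of continuous maps; $m_{j,0}$ is continuous by the implicit function theorem applied to the defining equation at $\eta=0$, with non-vanishing derivative $r_{j,0}$). Since $\mathcal Z$ is compact and $|r_{j,0}(z)|>\bar\epsilon$ for every $z\in\mathcal Z$ by Assumption \ref{as:onP2} (i), there exists $\delta>0$ with $\inf_{z\in\mathcal Z}|r_{j,0}(z)|\ge \bar\epsilon+\delta$. Shrinking $N'$ so that $\sup_{z\in\mathcal Z}|r_{j,\eta}(z)-r_{j,0}(z)|<\delta/2$ for all $\eta\in N'$ and intersecting over $j\in\{L,U\}$ produces the required neighborhood $N$.

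The principal obstacle is the conversion of the parametric perturbation of the conditional law into a uniform (in $z$ and $\tilde m$) control on $g_\eta-g_0$; this is exactly what Assumption \ref{as:onP2} (iii) is designed to deliver, through its quantification over arbitrary bounded, smoothly $z$-varying test functions $\varphi$. Once that uniform estimate is in hand, the rest of the argument is a routine continuity-plus-compactness statement, together with the uniform convergence $m_{j,\eta}\to m_{j,0}$ already furnished by Lemma \ref{lem:onm}.
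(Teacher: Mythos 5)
Your proof is correct and follows essentially the same route as the paper's: decompose $r_{j,\eta}$ into the baseline term plus a perturbation of order $O(\eta)$ controlled uniformly via Assumption \ref{as:onP2} (iii), then use compactness of $\mathcal Z$ and the strict bound $|r_{j,0}(z)|>\bar\epsilon$ to preserve the inequality on a small neighborhood. If anything you are more careful than the paper, which silently identifies $-\frac{d}{d\tilde m}E[q(Y_j-\tilde m)|Z=z]$ evaluated at $\tilde m=m_{j,\eta}(z)$ with $r_j(z)$ (evaluated at $m_{j,0}(z)$); your modulus-of-continuity step handles exactly that gap.
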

\begin{proof}[\rm Proof of Lemma \ref{lem:ons}]
By \eqref{eq:tangent3} and \eqref{eq:reta}, $r_{L,\eta}$ can be written as
	\begin{align}
	r_{L,\eta}(z)&\equiv-\frac{d}{d\tilde m}E_\eta\left[q(y_L-\tilde m)|Z=z\right]\big|_{\tilde m=m_{L,\eta}(z)}\nonumber\\
	&=	-\frac{d}{d\tilde m}\Big(E\left[q(y_L-\tilde m)|Z=z\right]+2\eta\int q(y_L-\tilde m)\bar v(x)v^2_0(y_L,y_U|z)d\lambda(y_L,y_U)\Big)\big|_{\tilde m=m_{L,\eta}(z)}\nonumber\\
	&=r_L(z)-2\eta \frac{d}{d\tilde m}\int q(y_L-\tilde m)\bar v(x)v^2_0(y_L,y_U|z)d\lambda(y_L,y_U)\big|_{\tilde m=m_{L,\eta}(z)}.\label{eq:tangent7}
	\end{align}
Since $\bar v$ is bounded and continuously differentiable, the second term on the right hand side of  \eqref{eq:tangent7} is well-defined and is bounded because Assumption \ref{as:onP2} (iii) holds for $P\in\mathbf P$. By Assumption \ref{as:onP2} (i) and Eq. \eqref{eq:tangent7}, we may take a neighborhood $N$ of 0 small enough so that $|r_{L,\eta}(z)|>\bar\epsilon$ for all $\eta\in N$ and $z\in\mathcal Z$. A similar argument can be made for $r_{U,\eta}(z)$. Thus, the claim of the lemma follows.
\end{proof}

\begin{lemma}\label{lem:onv}
Let $\eta\mapsto h_\eta$ be a curve in $L^2_\mu$ defined in \eqref{eq:tangent3}-\eqref{eq:heta}. Suppose further that $P\in\mathbf P$. Then, there exists a neighborhood $N$ of 0 such that (i) for any $\varphi:\mathcal X\to\mathbb R$ that is bounded and continuously differentiable in $z$ with bounded derivatives, $\int\varphi(x) v_\eta^2(y_L,y_U|z)d\lambda(y_L,y_U)$ is continuously differentiable in $z$ on $\mathcal Z$ with bounded derivatives; (ii) $\int q(y_L-\tilde m) \varphi(x)v_\eta^2(y_L,y_U|z)d\lambda(y_L,y_U)$ and $\int q(y_U-\tilde m)\varphi(x) v_\eta^2(y_L,y_U|z)d\lambda(y_L,y_U)$ are continuously differentiable in $(z,\tilde m)$ on $\mathcal Z\times D$ with bounded derivatives for all $\eta\in N$.
\end{lemma}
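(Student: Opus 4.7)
The plan is to exploit the explicit multiplicative form $v_\eta^2(y_L,y_U|z) = v_0^2(y_L,y_U|z)(1+2\eta\bar v(x))$ coming from \eqref{eq:tangent3}, which reduces every integral against $v_\eta^2$ to two integrals against $v_0^2$, one a constant in $\eta$ and one linear in $\eta$. Concretely, for any bounded measurable $\psi(x)$ I write
\begin{align}
\int \psi(x)\, v_\eta^2(y_L,y_U|z)\, d\lambda(y_L,y_U) = E[\psi(X)\mid Z=z] + 2\eta\, E[\psi(X)\bar v(X)\mid Z=z],
\end{align}
where the conditional expectations are taken under $P$. Thus both parts of the lemma will follow once I verify that the two conditional expectations on the right inherit the required smoothness in $z$ (and, for part (ii), in $\tilde m$).

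For part (i), take $\psi=\varphi$. The first term $E[\varphi(X)\mid Z=z]$ is continuously differentiable in $z$ with bounded derivatives by Assumption \ref{as:onP2}(ii), which applies since $\varphi$ is bounded and continuously differentiable in $z$ with bounded derivatives by hypothesis. For the second term I apply Assumption \ref{as:onP2}(ii) again, but to the product $\varphi\cdot\bar v$; the key preliminary step is to observe that $\bar v(x)=\tilde v(x)-E[\tilde v(X)\mid Z=z]$ is itself bounded and continuously differentiable in $z$ with bounded derivatives, because $\tilde v$ has these properties by construction and the conditional expectation of $\tilde v$ inherits them by one more invocation of Assumption \ref{as:onP2}(ii). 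The product $\varphi\bar v$ is then bounded and $C^1$ in $z$ with bounded derivatives, so Assumption \ref{as:onP2}(ii) yields the smoothness of $E[\varphi(X)\bar v(X)\mid Z=z]$, and linearity of the decomposition in $\eta$ gives bounded derivatives in $z$ for every fixed $\eta$.

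For part (ii), I run the same decomposition with $\psi(x)=q(y_j-\tilde m)\varphi(x)$, $j=L,U$, obtaining
\begin{align}
\int q(y_j-\tilde m)\varphi(x)\, v_\eta^2\, d\lambda = E[q(Y_j-\tilde m)\varphi(X)\mid Z=z] + 2\eta\, E[q(Y_j-\tilde m)\varphi(X)\bar v(X)\mid Z=z].
\end{align}
The first summand is $C^1$ in $(z,\tilde m)$ on $\mathcal Z\times D$ with bounded derivatives directly by Assumption \ref{as:onP2}(iii). For the second summand I invoke Assumption \ref{as:onP2}(iii) once more with $\varphi$ replaced by $\varphi\bar v$, which is legitimate by the same observation as above that $\varphi\bar v$ belongs to the class of bounded, continuously $z$-differentiable functions with bounded derivatives to which Assumption \ref{as:onP2}(iii) applies. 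The resulting bounds on the derivatives are uniform in $\eta$ on any bounded neighborhood of $0$, so any such neighborhood $N$ works.

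The only genuinely delicate point is the smoothness of $\bar v$ in $z$, since $\bar v$ is not itself in the hypothesis class of $\varphi$ a priori — it involves a conditional expectation under $P$. This is precisely what Assumption \ref{as:onP2}(ii) (which $P\in\mathbf P$ satisfies) is designed for, and it is the one step where the assumption must be applied to an auxiliary function rather than directly to $\varphi$. Once this is in hand, everything else is bookkeeping: linear-in-$\eta$ decomposition, product rule, and uniform bounds on a bounded neighborhood of the origin, so no restriction on $N$ beyond boundedness is needed.
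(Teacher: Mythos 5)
Your proposal is correct and follows essentially the same route as the paper: the linear-in-$\eta$ decomposition of $v_\eta^2$ via \eqref{eq:tangent3}, followed by applications of Assumption \ref{as:onP2}(ii)--(iii) to $\varphi$ and to $\varphi\bar v$, with the smoothness of $\bar v$ itself secured by one further application of Assumption \ref{as:onP2}(ii) to $\tilde v$ (the paper establishes this last fact in the proof of Lemma \ref{lem:onm} and reuses it here). No gaps.
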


\begin{proof}[\rm Proof of Lemma \ref{lem:onv}]	
	Let $\varphi:\mathcal X\to\mathbb R$ be bounded and continuously differentiable in $z$ with bounded derivatives. By \eqref{eq:tangent3}, we may write
	\begin{equation}
	\int \varphi(x)v^2_\eta(y_L,y_U)d\lambda(y_L,y_U)
	=\int\varphi(x)v^2_0(y_L,y_U|z)d\lambda(y_L,y_U)
	+2\eta\int\varphi(x)\bar v(x)v^2_0(y_L,y_U|z)d\lambda(y_L,y_U)
	\end{equation}
	Note that $\bar v$ is bounded and continuously differentiable in $z$ with bounded derivatives. Thus, 
	 by Assumption \ref{as:onP2} (ii), $\int\varphi(x)v^2_\eta(y_L,y_U)d\lambda(y_L,y_U)$ is bounded and continuously differentiable in $z$ with bounded derivatives. Similarly, we may write
	\begin{multline}
	\int q(y_L-\tilde m)\varphi(x)v^2_\eta(y_L,y_U|z)d\lambda(y_L,y_U)\\
	=\int q(y_L-\tilde m)\varphi(x)v^2_0(y_L,y_U|z)d\lambda(y_L,y_U)+2\eta\int q(y_L-\tilde m)\varphi(x)\bar v(x)v^2_0(y_L,y_U|z)d\lambda(y_L,y_U).\label{eq:onv1}
	\end{multline}
	By Assumption \ref{as:onP2} (iii), $\int q(y_L-\tilde m)\varphi(x)v^2_0(y_L,y_U)d\lambda(y_L,y_U)$ is bounded and continuously differentiable in $z$ with bounded derivatives. Further, since $\bar v$ is bounded and continuously differentiable in $z$ with bounded derivatives, again by Assumption \ref{as:onP2} (iii), the same is true for the second term in the right hand side of \eqref{eq:onv1}.
	The argument for  $\int q(y_U-\tilde m)\varphi(x)v^2_\eta(y_L,y_U)d\lambda(y_L,y_U)$ is similar. Thus the claim of the lemma follows.
\end{proof}

\begin{lemma}\label{lem:onmLmU}
Let $\eta\mapsto h_\eta$ be a curve in $L^2_\mu$ defined in \eqref{eq:tangent3}-\eqref{eq:heta}. Suppose that $P\in\mathbf P$. Then, there exists a neighborhood $N$ of 0 such that	$m_{L,\eta}$ and $ m_{U,\eta}$ are continuously differentiable $a.e.$ on $\mathcal Z$ with bounded derivatives. Further,  the maps $(z,\eta)\mapsto\nabla_z m_{L,\eta}(z)$ and $(z,\eta)\mapsto\nabla_z m_{U,\eta}(z)$ are continuous on $\mathcal Z\times N$.
\end{lemma}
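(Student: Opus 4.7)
The plan is to apply the implicit function theorem to the defining equation $\int q(y_j-\tilde m)v_\eta^2(y_L,y_U|z)\,d\lambda(y_L,y_U)=0$ viewed jointly in $(z,\tilde m,\eta)$, and then patch together the local implicit functions using compactness of $\mathcal Z$.

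First, I would invoke Lemma \ref{lem:onv}(ii) with $\varphi\equiv 1$ to conclude that, for some neighborhood $N'$ of $0$, the map
\begin{equation*}
F_j(z,\tilde m,\eta)\;\equiv\;\int q(y_j-\tilde m)\,v_\eta^2(y_L,y_U|z)\,d\lambda(y_L,y_U)
\end{equation*}
is continuously differentiable in $(z,\tilde m)$ on $\mathcal Z\times D$ with bounded derivatives, for each $\eta\in N'$. Moreover, inspecting the proof of Lemma \ref{lem:onv} one sees that these derivatives depend continuously on $\eta$, because $\eta$ enters $v_\eta^2$ linearly with a bounded, $z$-smooth coefficient $\bar v$. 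By Lemma \ref{lem:onm}(i), on a possibly smaller neighborhood $N\subseteq N'$ the equation $F_j(z,\tilde m,\eta)=0$ has the unique solution $m_{j,\eta}(z)\in D$; and by Lemma \ref{lem:ons}, $\partial F_j/\partial\tilde m=-r_{j,\eta}(z)$ satisfies $|r_{j,\eta}(z)|>\bar\epsilon$ uniformly in $(z,\eta)\in\mathcal Z\times N$.

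Second, the classical implicit function theorem applied locally around each $(z_0,m_{j,\eta_0}(z_0),\eta_0)$ yields that $m_{j,\eta}(z)$ is continuously differentiable in $(z,\eta)$ on a neighborhood of $(z_0,\eta_0)$, with
\begin{equation*}
\nabla_z m_{j,\eta}(z)\;=\;r_{j,\eta}(z)^{-1}\,\nabla_z F_j(z,\tilde m,\eta)\big|_{\tilde m=m_{j,\eta}(z)}.
\end{equation*}
Since $\nabla_z F_j$ is bounded (Lemma \ref{lem:onv}(ii)) and $|r_{j,\eta}(z)|^{-1}<\bar\epsilon^{-1}$ uniformly (Lemma \ref{lem:ons}), the right-hand side is bounded uniformly in $(z,\eta)\in\mathcal Z\times N$. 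Compactness of $\mathcal Z$ then lets me cover it by finitely many such neighborhoods, obtaining a common $N$ on which $m_{j,\eta}$ is continuously differentiable a.e.\ on $\mathcal Z$ with uniformly bounded derivative, for $j=L,U$.

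Third, for joint continuity of $(z,\eta)\mapsto\nabla_z m_{j,\eta}(z)$ on $\mathcal Z\times N$, I would verify that each piece in the displayed formula is jointly continuous: (a) $(z,\eta)\mapsto m_{j,\eta}(z)$ is continuous (already given by Lemma \ref{lem:onm}); (b) $(z,\tilde m,\eta)\mapsto\nabla_z F_j(z,\tilde m,\eta)$ is continuous (from the linear-in-$\eta$ structure of $v_\eta^2$ combined with Assumption \ref{as:onP2}(iii)); (c) $(z,\eta)\mapsto r_{j,\eta}(z)$ is continuous and bounded away from zero (Lemma \ref{lem:ons} together with Eq.~\eqref{eq:tangent7}). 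Composing these yields the desired continuity. The main obstacle is item (b): one must confirm that differentiation in $z$ commutes with integration against $v_\eta^2$ uniformly in $\eta\in N$ so that $\nabla_z F_j$ remains a continuous function of $\eta$, which I would justify by dominated convergence using the bounded derivatives supplied by Lemma \ref{lem:onv} and boundedness of $\bar v$ and its $z$-derivatives.
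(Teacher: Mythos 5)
Your proposal is correct and follows essentially the same route as the paper: establish joint continuous differentiability of $(z,\tilde m,\eta)\mapsto\int q(y_j-\tilde m)v_\eta^2(y_L,y_U|z)\,d\lambda(y_L,y_U)$ from the linear-in-$\eta$ decomposition and Assumption \ref{as:onP2} (iii), then apply the implicit function theorem (the paper defers this step to the argument of Theorem 3.1 in Newey and Stoker, which is exactly the computation you write out, with the correct identity $\partial F_j/\partial\tilde m=-r_{j,\eta}$ and non-degeneracy from Lemma \ref{lem:ons}), and finally use compactness of $\mathcal Z$ for boundedness. Your version is merely more explicit about the non-degeneracy condition and the joint continuity of each factor, which the paper leaves implicit.
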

\begin{proof}[\rm Proof of Lemma \ref{lem:onmLmU}]
	We show the claims of the lemma for $m_{L,\eta}$.
By $P\in\mathbf P$,	Assumption \ref{as:onP2} (iii) holds, which implies that the maps $(z,\tilde m)\mapsto\int q(y_L-\tilde m)v^2_0(y_L,y_U|z)d\lambda(y_L,y_U)$ and $(z,\tilde m)\mapsto\int q(y_L-\tilde m)\bar v(x)v^2_{0}(y_L,y_U|z)d\lambda(y_L,y_U)$ are continuously differentiable  on $\mathcal Z\times D$.
By \eqref{eq:onv1} (with $\varphi(x)=1$), it then follows that $(z,\tilde m,\eta)\mapsto\int_{\mathcal X}q(y_L-\tilde m)\varphi(x)v^2_\eta(y_L,y_U|z)d\lambda(y_L,y_U)$
is continuously differentiable on $\mathcal Z\times D\times N$ for some $N$ that contains $0$ in its interior.
	 Following the argument in the proof of Theorem 3.1 in \cite{NeweyStoker1993EJES}, it then follows that  $\nabla_z m_{L,\eta}$ exists and is continuous on $\mathcal Z\times N$. By the compactness of  $\mathcal Z$, $N$ can be chosen small enough so that $\nabla_z m_{L,\eta}$ is bounded on $\mathcal Z\times N$.   The argument for $m_{U,\eta}$ is similar. Hence it is omitted.
\end{proof}

\begin{theorem}\label{thm:tangent}
Let Assumptions \ref{as:onY}-\ref{as:onw} and \ref{as:onmu} hold and $P\in\mathbf P.$ Then, the tangent space of $\mathbf S$ at $s\equiv \sqrt{dP/d\mu}$ is given by $\dot{\mathbf S}=\{h\in L^2_\mu:\int h(x)s(x)d\mu(x)=0.\}$
\end{theorem}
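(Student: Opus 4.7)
The plan is to verify the two inclusions $\dot{\mathbf S}\subseteq\mathbf T$ and $\mathbf T\subseteq\dot{\mathbf S}$, where $\mathbf T\equiv\{h\in L^2_\mu:\int h\,s\,d\mu=0\}$. The forward direction is routine: along any curve $\eta\mapsto h_\eta$ in $\mathbf S$, each $h_\eta$ is the square root of a probability density, so $\int h_\eta^2\,d\mu=1$ for all $\eta$ in the domain; Fr\'echet-differentiating this identity at $\eta=0$ via Definition \ref{def:curve} yields $\int s\,\dot h_0\,d\mu=0$, i.e.\ $\dot h_0\in\mathbf T$. Since $\mathbf T$ is a closed subspace of $L^2_\mu$, taking the closed linear span preserves this inclusion.

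For the reverse inclusion, I would exploit the explicit parametric family $\eta\mapsto h_\eta$ built in \eqref{eq:tangent3}-\eqref{eq:heta}. Lemmas \ref{lem:onm}-\ref{lem:onmLmU} have been arranged precisely so that, for every choice of bounded perturbations $(\tilde v,\tilde\phi)$ that are continuously differentiable in $z$ with bounded derivatives, there is a neighborhood $N$ of $0$ on which $h_\eta$ still satisfies Assumptions \ref{as:onP1}-\ref{as:onP2}, and hence lies in $\mathbf S$. A direct computation gives $\dot h_0=\dot v_0\phi_0+v_0\dot\phi_0=(\bar v+\bar\phi)\,s$. Letting $\dot{\mathbf U}$ denote the closed linear span in $L^2_\mu$ of all such $\dot h_0$, we have $\dot{\mathbf U}\subseteq\dot{\mathbf S}$, so it suffices to prove $\dot{\mathbf U}=\mathbf T$.

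To establish this, I would take arbitrary $g\in\mathbf T$ and set $k\equiv g/s$ on $\{s>0\}$; Assumption \ref{as:onmu}(ii) together with $P\ll\mu$ prevents any pathology from the zero set of $s$, and the constraint $\int g s\,d\mu=0$ becomes $E_P[k(X)]=0$ with $k\in L^2_P$. The orthogonal decomposition in $L^2_P$
\begin{align}
k(x)=\bigl(k(x)-E_P[k(X)\mid Z=z]\bigr)+E_P[k(X)\mid Z=z]
\end{align}
splits $k$ into a piece mean-zero given $Z$ and a piece mean-zero marginally, matching exactly the structure of $\bar v$ and $\bar\phi$. Density of bounded, continuously differentiable functions in $L^2_P$ (using compactness of $\mathcal Z$ from Assumption \ref{as:onY}(i)) should then yield sequences $\tilde v_n,\tilde\phi_n$ with the regularity demanded by the curves, whose induced centered versions $\bar v_n,\bar\phi_n$ approximate the two components in $L^2_P$, so that $(\bar v_n+\bar\phi_n)s\to g$ in $L^2_\mu$.

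The main obstacle will be this final approximation step: I must produce $\tilde v_n,\tilde\phi_n$ that are bounded and continuously differentiable in $z$ with bounded derivatives and whose centered versions approximate the targets after projection, rather than merely before. This will require a truncation-plus-mollification construction that respects both summands of the decomposition (subtracting the conditional mean after smoothing, so that smoothing the first piece does not reintroduce mass in the second) and leans on Assumption \ref{as:onP2}(ii) to guarantee that conditional-expectation centering preserves membership in the smooth bounded class that Lemmas \ref{lem:onm}-\ref{lem:onmLmU} require. Once that is in hand, combining the two inclusions yields $\dot{\mathbf S}=\mathbf T$.
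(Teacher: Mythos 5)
Your proposal follows essentially the same route as the paper: the forward inclusion via differentiating $\int h_\eta^2\,d\mu=1$ (the paper cites Proposition 3.2.3 of \cite{BickelKlassenRitov1993} for this), and the reverse inclusion by showing the curves \eqref{eq:tangent3}--\eqref{eq:heta} remain in $\mathbf P$ for small $\eta$ (Lemmas \ref{lem:onm}--\ref{lem:onmLmU}) so that their tangent set $\dot{\mathbf U}$, consisting of scores $(\bar v+\bar\phi)s$ matching the decomposition $k=(k-E_P[k\mid Z])+E_P[k\mid Z]$, is dense in $\mathbf T$. The final smooth-approximation step you flag as the remaining obstacle is exactly what the paper delegates to Lemma C.7 of \cite{Newey1991NSMES} and the argument in \cite{NeweyStoker1993EJES}, so no substantive gap relative to the paper's own proof.
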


\begin{proof}[\rm Proof of Theorem \ref{thm:tangent}]
	Let $\mathbf T\equiv \{h\in L^2_\mu:\int h(x)s(x)d\mu(x)=0\}$.
 $\dot{\mathbf S}\subseteq \mathbf T$ holds by Proposition 3.2.3 in \cite{BickelKlassenRitov1993}.	
	
For the converse: $\mathbf T\subseteq\dot{\mathbf S}$, it suffices to show that a dense subset of $\mathbf T$ is contained in $\dot{\mathbf S}$. For this, let $\dot{\mathbf U}\equiv\{\dot h_0\in L^2_\mu:\int \dot h_0(x)s(x)d\mu(x)=0\}$ denote the tangent space of the curve defined in \eqref{eq:tangent3}-\eqref{eq:heta}.   
By Lemmas \ref{lem:onm}-\ref{lem:onmLmU}, there is a neighborhood $N$ of 0 for which Assumptions \ref{as:onP1} and \ref{as:onP2} hold for all $\eta\in N$. Therefore, $\eta\mapsto h^2_\eta,\eta\in N$ is a regular parametric submodel of $\mathbf P$ whose Fr\'{e}chet derivative at $\eta=0$ is given by
$\dot h_{0}$. Hence, $\dot{\mathbf U}\subseteq\dot{\mathbf S}.$
Further, by Lemma C.7 in \cite{Newey1991NSMES} and the argument used in the proof of Theorem 3.1 in \cite{NeweyStoker1993EJES}, $\dot{\mathbf U}$ is dense in $\mathbf T$.
Thus, $\mathbf T\subseteq \dot{\mathbf S}$.
\end{proof}

\begin{lemma}\label{lem:support}
	Let $\eta\mapsto h_\eta$ be a curve in $L^2_\mu$ defined in \eqref{eq:tangent3}-\eqref{eq:heta}. Suppose Assumption \ref{as:onY} holds.
Suppose further that $P\in\mathbf P$. Then, there is a compact set $D'$ and a neighborhood $N$ of 0 such that  $D'$ contains the support of $Y_L-m_{L,\eta_0}(Z)$ and $Y_U-m_{U,\eta_0}(Z)$ in its interior for all $\eta_0\in N$.
\end{lemma}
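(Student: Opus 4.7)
My plan is to stitch together two earlier lemmas that already control the support of $(Y_L,Y_U)$ and the range of $m_{j,\eta_0}$ uniformly in $\eta_0$ near zero. First, from Lemma \ref{lem:onw} I obtain a neighborhood $N_1$ of $0$ such that, for every $\eta\in N_1$, the conditional support of $(Y_L,Y_U)$ given $Z$ under $P_\eta$ is contained in $D^o\times D^o$, where $D$ is the compact set from Assumption \ref{as:onP1}(i). Second, from Lemma \ref{lem:onm} I obtain a neighborhood $N_2$ of $0$ such that the defining equations for $m_{L,\eta}$ and $m_{U,\eta}$ admit unique solutions \emph{in $D$} for every $(z,\eta)\in\mathcal Z\times N_2$, so $m_{j,\eta_0}(z)\in D$ uniformly in $(z,\eta_0)\in\mathcal Z\times N_2$ for $j=L,U$. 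Setting $N\equiv N_1\cap N_2$, both uniformity statements apply simultaneously for every $\eta_0\in N$.

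Next, for any such $\eta_0$ and any point $(y_L,y_U,z)$ in the support of $P_{\eta_0}$, both $y_j$ and $m_{j,\eta_0}(z)$ lie in $D$, so the residual $y_j-m_{j,\eta_0}(z)$ lies in the Minkowski difference $D-D\equiv\{a-b:a,b\in D\}$. Since $D\times D$ is compact and the subtraction map $(a,b)\mapsto a-b$ is continuous, $D-D$ is a compact subset of $\mathbb{R}$.

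To conclude, I would take $D'$ to be any compact enlargement of $D-D$ that contains it in its interior, for instance $D'\equiv\{x\in\mathbb R:d(x,D-D)\le 1\}$, which is closed and bounded, hence compact, and contains the open set $\{x:d(x,D-D)<1\}\supset D-D$. Then for every $\eta_0\in N$ the support of $Y_L-m_{L,\eta_0}(Z)$ and of $Y_U-m_{U,\eta_0}(Z)$ under $P_{\eta_0}$ is contained in $D-D\subset\text{int}(D')$, as required. No step here is delicate; the substantive content has already been established in Lemmas \ref{lem:onw} and \ref{lem:onm}, and the only bookkeeping is to intersect the two neighborhoods and to enlarge $D-D$ slightly to obtain interior containment.
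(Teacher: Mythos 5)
Your proof is correct and follows essentially the same route as the paper: both invoke Lemma \ref{lem:onw} to confine the support of $(Y_L,Y_U)$ to $D$ and Lemma \ref{lem:onm} to confine $m_{j,\eta_0}(\mathcal Z)$ to $D$ on a common neighborhood $N$, then observe that the residuals lie in the compact set $D-D$. The only cosmetic difference is at the end: the paper takes $D'=[a-b,b-a]=D-D$ directly (interior containment holding because the supports sit in $D^o$), whereas you enlarge $D-D$ by a unit margin, which is equally valid.
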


\begin{proof}[\rm Proof of Lemma \ref{lem:support}]
By Lemma \ref{lem:onw}, there exists a neighborhood $N'$ of 0 such that the supports of $Y_L$ and $Y_U$ are contained in the interior of $D$ under $P_\eta$ for all $\eta$ in $N'$. 
Similarly, by Lemma \ref{lem:onm}, there is a neighborhood $N''$ of 0 such that $m_{L,\eta}(Z),m_{U,\eta}(Z)$ are well defined for all $\eta\in N''$ and their supports   are contained in the interior of $D$ respectively.  
Without loss of generality, let $D=[a,b]$ for some $-\infty<a<b<\infty$ and let $N=N'\cap N''.$ Then, the support of $Y_L-m_{L,\eta}(Z)$ is contained in $D'\equiv [a-b,b-a]$ for all $\eta\in N$.
A similar argument ensures that the support of  $Y_U-m_{U,\eta}(Z)$ is contained in $D'$. This completes the proof.
\end{proof}

\begin{lemma}\label{lem:dmlu}
		Let $\eta\mapsto h_\eta$ be a curve in $L^2_\mu$ defined in \eqref{eq:tangent3}-\eqref{eq:heta}. Suppose Assumption \ref{as:onY} holds.
	Suppose further that $P\in\mathbf P$. Then there is a neighborhood $N$ of 0 such that (i) 
 the functions $(z,\eta_0)\mapsto\frac{\partial}{\partial \eta}m_{L,\eta}(z)\big|_{\eta=\eta_0}$ and $(z,\eta_0)\mapsto\frac{\partial}{\partial \eta}m_{U,\eta}(z)\big|_{\eta=\eta_0}$ are bounded on $\mathcal Z\times N$; (ii) For each $z\in\mathcal Z$, the maps $\eta_0\mapsto\frac{\partial}{\partial \eta}m_{L,\eta}(z)\big|_{\eta=\eta_0}$ and $\eta_0\mapsto\frac{\partial}{\partial \eta}m_{U,\eta}(z)\big|_{\eta=\eta_0}$ are continuous on $N$.
\end{lemma}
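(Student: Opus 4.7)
The plan is to start from the explicit formula for the pathwise derivative provided by Lemma \ref{lem:onm} and then exploit a simplification that isolates the $\eta_0$-dependence. Recall that $\dot v_{\eta_0}(y_L,y_U|z) = \bar v_{\eta_0}(x) v_{\eta_0}(y_L,y_U|z)$ with $\bar v_{\eta_0} = \bar v\, v_0^2/v_{\eta_0}^2$, so the product $\dot v_{\eta_0} v_{\eta_0}$ collapses to $\bar v\, v_0^2$. Consequently, the formula in Lemma \ref{lem:onm} reduces to
\begin{align*}
\frac{\partial}{\partial \eta}m_{L,\eta}(z)\Big|_{\eta=\eta_0} = 2\, r_{L,\eta_0}^{-1}(z)\int q(y_L - m_{L,\eta_0}(z))\,\bar v(x)\, v_0^2(y_L,y_U|z)\,d\lambda(y_L,y_U),
\end{align*}
and analogously for $m_{U,\eta}$. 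The representation is a product of two factors, and $\eta_0$ enters only through $r_{L,\eta_0}(z)$ and $m_{L,\eta_0}(z)$, which greatly simplifies both claims.

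For claim (i), I would bound each factor uniformly on $\mathcal Z\times N$. Lemma \ref{lem:ons} gives $|r_{L,\eta_0}^{-1}(z)|\le 1/\bar\epsilon$ uniformly on $\mathcal Z\times N$. Assumption \ref{as:onY}(ii) supplies a uniform bound on $q$, the function $\bar v$ is bounded by construction, and $v_0^2(\cdot|z)$ is a conditional density that integrates to $1$. Multiplying these bounds yields the desired uniform bound on $\partial_\eta m_{L,\eta}(z)|_{\eta=\eta_0}$ on $\mathcal Z\times N$, and the same argument works for $m_{U,\eta}$.

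For claim (ii), I would show $\eta_0$-continuity of each factor separately with $z$ fixed. Continuity of $\eta_0\mapsto m_{L,\eta_0}(z)$ is already in Lemma \ref{lem:onm}. Combined with Assumption \ref{as:onP2}(iii) applied with $\varphi=\bar v$ (which is bounded and continuously differentiable in $z$ by construction, so the assumption applies under $P$), the integral $\tilde m\mapsto \int q(y_L-\tilde m)\bar v(x)v_0^2(y_L,y_U|z)d\lambda(y_L,y_U)$ is continuously differentiable in $\tilde m$; evaluating at $\tilde m=m_{L,\eta_0}(z)$ therefore produces a continuous function of $\eta_0$. For the factor $r_{L,\eta_0}^{-1}(z)$, I appeal to representation \eqref{eq:tangent7}, which expresses $r_{L,\eta}(z)$ as $r_L(z)-2\eta$ times a $\tilde m$-derivative of the same integral evaluated at $\tilde m=m_{L,\eta}(z)$; Assumption \ref{as:onP2}(iii) guarantees continuity of that derivative in $\tilde m$, and continuity of $\eta\mapsto m_{L,\eta}(z)$ gives continuity of $\eta_0\mapsto r_{L,\eta_0}(z)$, from which Lemma \ref{lem:ons} yields continuity of $r_{L,\eta_0}^{-1}(z)$. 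Products of continuous functions are continuous, so the claim follows, and the argument for $m_{U,\eta}$ is identical with $L$ replaced by $U$. The only mildly delicate point is verifying that the smoothness assumptions of \ref{as:onP2}(iii), which are imposed on $P$, are all we need; the simplification $\dot v_{\eta_0}v_{\eta_0}=\bar v v_0^2$ is precisely what makes this work, since it removes any dependence on $P_{\eta_0}$ from inside the integral.
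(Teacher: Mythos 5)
Your proof is correct, and part (i) is essentially identical to the paper's: both start from the formula in Lemma \ref{lem:onm}, bound $r_{j,\eta_0}^{-1}$ by $\bar\epsilon^{-1}$ via Lemma \ref{lem:ons}, and bound the integral by $\sup|q|\cdot\sup|\bar v|$. Where you diverge is part (ii). The paper keeps the integrand in the form $q(y_L-m_{L,\eta}(z))\,\dot v_{\eta}v_{\eta}$ and controls the difference at $\eta_n$ versus $\eta_0$ by a three-term triangle/Cauchy--Schwarz decomposition: the variation of $r^{-1}_{L,\eta}$, the variation of $\dot v_\eta v_\eta$ in $L^2_\lambda$ (handled by continuous Fr\'echet differentiability of the curve), and the variation of $q(y_L-m_{L,\eta}(z))$ (handled by dominated convergence together with the a.e.\ continuity of $q$ from Assumption \ref{as:onY}(ii) and $m_{L,\eta_n}\to m_{L,\eta_0}$). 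You instead observe that $\dot v_{\eta_0}v_{\eta_0}=\bar v_{\eta_0}v_{\eta_0}^2=\bar v\,v_0^2$ is independent of $\eta_0$, so the only $\eta_0$-dependence inside the integral is through $m_{L,\eta_0}(z)$, and you then compose the continuity of $\eta_0\mapsto m_{L,\eta_0}(z)$ with the smoothness in $\tilde m$ of $\tilde m\mapsto\int q(y_L-\tilde m)\bar v(x)v_0^2\,d\lambda$ guaranteed by Assumption \ref{as:onP2}(iii) (which applies to $\varphi=\bar v$ since $\bar v$ inherits boundedness and $z$-smoothness from $\tilde v$ via Assumption \ref{as:onP2}(ii)). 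Your route is cleaner for this particular multiplicative curve --- it sidesteps the $L^2_\lambda$ estimates and the a.e.-continuity-of-$q$ argument entirely, and it matches how the paper itself derives \eqref{eq:tangent4a} and \eqref{eq:tangent7} --- at the cost of leaning on the specific algebraic form of the curve in \eqref{eq:tangent3}--\eqref{eq:heta}; the paper's argument would survive for a more general curve treated only through its Fr\'echet derivative. One small point worth making explicit in your write-up is that the identity $\dot v_{\eta_0}v_{\eta_0}=\bar v v_0^2$ holds $\lambda$-a.e.\ on the relevant support (for $\eta_0$ in a small enough neighborhood, $v_{\eta_0}^2$ and $v_0^2$ share the same zero set because $1+2\eta_0\bar v$ is bounded away from zero), but this is immediate and does not affect the argument.
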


\begin{proof}[\rm Proof of Lemma \ref{lem:dmlu}]
By Lemmas \ref{lem:onm}, \ref{lem:ons} and \ref{lem:support}, Assumption \ref{as:onY} and $\bar v$ being bounded, it follows that 
\begin{align}
\big|	\frac{\partial}{\partial \eta}m_{j,\eta}(z)\Big|_{\eta=\eta_0}\big|\le 2\bar\epsilon^{-1}\sup_{u\in D'}|q(u)|\times\sup_{x\in\mathcal X}\bar v(x)<\infty,~j=L,U.
\end{align}		
Hence, the first claim follows.
Now let $\eta_n\in N$ be a sequence such that $\eta_n\to\eta_0.$ Then, by the triangle and Cauchy-Schwarz inequalities,
\begin{align}
	\Big| \frac{\partial}{\partial \eta}m_{L,\eta}(z)\Big|_{\eta=\eta_n}&-	\frac{\partial}{\partial \eta}m_{L,\eta}(z)\Big|_{\eta=\eta_0}\Big|\le 2|r_{L,\eta_n}^{-1}(z)-r^{-1}_{L,\eta_0}(z)|\sup_{u\in D'}|q(u)|\times\|\dot v_{\eta_0}\|_{L^2_\lambda}\|v_{\eta_0}\|_{L^2_\lambda}\\
	&+2\bar\epsilon^{-1}\sup_{u\in D'}|q(u)|\big(\| v_{\eta_n}-v_{\eta_0}\|_{L^2_\lambda}\|\dot v_{\eta_n}\|_{L^2_\lambda}+\|\dot v_{\eta_n}-\dot v_{\eta_0}\|_{L^2_\lambda}\|v_{\eta_0}\|_{L^2_\lambda}\big)\\
&+2\bar\epsilon^{-1}	\int |q(y_L-m_{\eta_n}(z))-q(y_L-m_{\eta_0}(z))|\dot v_{\eta_0}(y_L,y_U|z)v_{\eta_0}(y_L,y_Uz)d\lambda(y_L,y_U).\label{eq:mL3}
	\end{align}
Note that $|r_{L,\eta_n}^{-1}-r^{-1}_{L,\eta_0}|\to 0$, $a.e.$ by \eqref{eq:tangent7}. By the continuous Fr\'echet differentiability of $\eta\mapsto v_\eta$, it follows that $\| v_{\eta_n}-v_{\eta_0}\|_{L^2_\lambda}=o(1)$ and $\|\dot v_{\eta_n}-\dot v_{\eta_0}\|_{L^2_\lambda}=o(1)$. Further, \eqref{eq:mL3} tends to 0 as $\eta_n\to\eta_0$ by the dominated convergence theorem, almost everywhere continuity of $q$ ensured by Assumption \ref{as:onY} (ii), and $m_{\eta_n}\to m_{\eta_0}, a.e.$ by  Lemma \ref{lem:onmLmU}. Therefore, $| \frac{\partial}{\partial \eta}m_{L,\eta}(z)|_{\eta=\eta_n}-	\frac{\partial}{\partial \eta}m_{L,\eta}(z)|_{\eta=\eta_0}|=o(1).$ The continuity of $\frac{\partial}{\partial \eta}m_{U,\eta}(z)|_{\eta=\eta_0}$ can be shown in the same way. This completes the proof.
\end{proof}

\begin{lemma}\label{lem:intermediate}
		Let $\eta\mapsto h_\eta$ be a curve in $L^2_\mu$ defined in \eqref{eq:tangent3}-\eqref{eq:heta}. Suppose Assumption \ref{as:onmu} holds.
	Suppose further that $P\in\mathbf P$. Then, there is a neighborhood $N$ of 0 such that
	\begin{align}
\frac{\partial}{\partial \eta}\int p'l_{\eta_0}(z)1\{p'l_{\eta}(z)>0\}(m_{U,\eta_0}(z)-m_{L,\eta_0}(z))\phi^2_{\eta_0}(z)d\nu(z)\Big|_{\eta=\eta_0}=0,
	\end{align}
for all $\eta_0\in N$.
\end{lemma}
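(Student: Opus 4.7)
The plan is to exploit the fact that the only $\eta$-dependence in the integrand is through the indicator $1\{p'l_\eta(z)>0\}$, so that $F(\eta)-F(\eta_0)$ reduces to an integral over the region where the sign of $p'l_\eta(z)$ differs from the sign of $p'l_{\eta_0}(z)$. The key observation is that on this ``flip'' region the multiplicative factor $p'l_{\eta_0}(z)$ is small, which will yield an $o(\eta-\eta_0)$ bound.

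First I would control the difference $p'l_\eta(z)-p'l_{\eta_0}(z)$ uniformly in $z\in\mathcal Z$. From the explicit expression \eqref{eq:tangent5} and $\bar\phi$ being bounded and continuously differentiable with bounded derivatives (together with Assumption \ref{as:onw} on $w,\nabla_z w$), the map $\eta\mapsto l_\eta(z)$ is continuously differentiable with derivative bounded uniformly in $z$ on a neighborhood $N$ of $0$. Hence there exists a constant $C_1<\infty$ such that $|p'l_\eta(z)-p'l_{\eta_0}(z)|\le C_1|\eta-\eta_0|$ for all $z\in\mathcal Z$ and $\eta,\eta_0\in N$. Whenever the indicators $1\{p'l_\eta(z)>0\}$ and $1\{p'l_{\eta_0}(z)>0\}$ disagree, the values $p'l_\eta(z)$ and $p'l_{\eta_0}(z)$ must lie on opposite sides of $0$, forcing $|p'l_{\eta_0}(z)|\le C_1|\eta-\eta_0|$.

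Next I would write
\begin{align}
|F(\eta)-F(\eta_0)|\le \int |p'l_{\eta_0}(z)|\,|1\{p'l_\eta(z)>0\}-1\{p'l_{\eta_0}(z)>0\}|\,|m_{U,\eta_0}(z)-m_{L,\eta_0}(z)|\phi^2_{\eta_0}(z)d\nu(z).
\end{align}
Using the previous step, the indicator difference is bounded by $1\{|p'l_{\eta_0}(z)|\le C_1|\eta-\eta_0|\}$, and $|m_{U,\eta_0}(z)-m_{L,\eta_0}(z)|$ is bounded by $2\sup_{u\in D}|u|\equiv C_2<\infty$ by Lemma \ref{lem:onw} (which places $m_{L,\eta_0},m_{U,\eta_0}$ in $D$). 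Therefore
\begin{align}
|F(\eta)-F(\eta_0)|\le C_2 \cdot C_1|\eta-\eta_0|\cdot P_{\eta_0}\bigl(|p'l_{\eta_0}(Z)|\le C_1|\eta-\eta_0|\bigr).
\end{align}

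Finally I would show the probability on the right goes to $0$ as $\eta\to\eta_0$. By Assumption \ref{as:onmu} (ii) applied to the measurable function $F(z)=p'l_{\eta_0}(z)$, $\mu(\{x:p'l_{\eta_0}(z)=0\})=0$, and since $P_{\eta_0}\ll\mu$ this gives $P_{\eta_0}(p'l_{\eta_0}(Z)=0)=0$. The sets $\{|p'l_{\eta_0}(z)|\le C_1|\eta-\eta_0|\}$ decrease to $\{p'l_{\eta_0}(z)=0\}$ as $\eta\to\eta_0$, so continuity of the measure yields $P_{\eta_0}(|p'l_{\eta_0}(Z)|\le C_1|\eta-\eta_0|)\to 0$. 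Dividing by $|\eta-\eta_0|$ gives $(F(\eta)-F(\eta_0))/(\eta-\eta_0)\to 0$, which is exactly the claim.

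The main (mild) obstacle is recognizing that $p'l_{\eta_0}(z)$ appears as a multiplicative factor outside the indicator, which effectively cancels the unit mass that would otherwise be picked up at the ``boundary'' $\{p'l_{\eta_0}(z)=0\}$ when differentiating the discontinuous integrand; absolute continuity of the distribution of $p'l_{\eta_0}(Z)$ around $0$, guaranteed by Assumption \ref{as:onmu} (ii), is what delivers the required $o(\eta-\eta_0)$ rate rather than merely $O(\eta-\eta_0)$.
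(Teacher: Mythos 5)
Your proposal is correct and follows essentially the same route as the paper's proof: bound the sign-flip region by $\{|p'l_{\eta_0}(z)|\le C|\eta-\eta_0|\}$ via a uniform Lipschitz/mean-value bound on $\eta\mapsto p'l_\eta(z)$ from \eqref{eq:tangent5}, use the multiplicative factor $|p'l_{\eta_0}(z)|$ on that region to extract the $O(|\eta-\eta_0|)$ factor, and then invoke Assumption \ref{as:onmu} (ii) with $P_{\eta_0}\ll\mu$ so that the remaining measure of the flip region vanishes, giving $o(|\eta-\eta_0|)$ overall. (Minor note: the boundedness of $m_{U,\eta_0}-m_{L,\eta_0}$ by $2\sup_{y\in D}|y|$ comes from Lemma \ref{lem:onm}, not Lemma \ref{lem:onw}.)
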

\begin{proof}[\rm Proof of Lemma \ref{lem:intermediate}]
By \eqref{eq:tangent5}, there is a neighborhood $N$ of 0 such that for all $\eta_0\in N$,
\begin{align}
\frac{\partial p'l_\eta(z)}{\partial\eta}\Big|_{\eta=\eta_0}=\frac{4p'\nabla_z\bar\phi(z)(1+\eta_0\bar\phi(z))}{(1+2\eta_0\bar\phi(z))^2}.	
\end{align}
Hence, by compactness of $\mathbb S^{\ell}\times\mathcal Z$, the map $(p,z,\eta_0)\mapsto\frac{\partial p'l_\eta(z)}{\partial\eta}|_{\eta=\eta_0}$ is uniformly bounded on $\mathbb S^\ell\times \mathcal Z\times N$ by some constant $M>0$. Let $\eta_n$ be a sequence such that $\eta_n\to\eta_0\in N$. By the mean value theorem, there is $\bar \eta_n(z)$ such that
\begin{align}
\lim_{n\to\infty}&\int p'l_{\eta_0}(z)(1\{p'l_{\eta_n}(z)>0\}-1\{p'l_{\eta_0}(z)>0\})(m_{U,\eta_0}(z)-m_{L,\eta_0}(z))\phi^2_{\eta_0}(z)d\nu(z)\notag	\\
= &\lim_{n\to\infty}\int p'l_{\eta_0}(z)(1\{p'l_{\eta_0}(z)>(\eta_0-\eta_n)\frac{\partial p'l_\eta(z)}{\partial\eta}\Big|_{\eta=\bar\eta_n(z)}\}-1\{p'l_{\eta_0}(z)>0\})(m_{U,\eta_0}(z)-m_{L,\eta_0}(z))\phi^2_{\eta_0}(z)d\nu(z)\notag\\
\le &\lim_{n\to\infty}\int |p'l_{\eta_0}(z)|1\{|p'l_{\eta_0}(z)|\le M|\eta_0-\eta_n|\}|m_{U,\eta_0}(z)-m_{L,\eta_0}(z)|\phi^2_{\eta_0}(z)d\nu(z),\label{eq:dom}
\end{align}
where the last inequality follows  from $\frac{\partial p'l_\eta(z)}{\partial\eta}|_{\eta=\bar\eta_n(z)}$ being  bounded by $M$. Therefore, from \eqref{eq:dom}, we conclude that
\begin{multline}
	\lim_{n\to\infty}\frac{1}{|\eta_n-\eta_0|}|\int p'l_{\eta_0}(z)(1\{p'l_{\eta_n}(z)>0\}-1\{p'l_{\eta_0}(z)>0\})(m_{U,\eta_0}(z)-m_{L,\eta_0})\phi^2_{\eta_0}(z)d\nu(z)|\\
	\le 2\sup_{y\in D}|y|\times  M\times \lim_{n\to\infty}\int 1\{|p'l_{\eta_0}(z)|\le M|\eta_0-\eta_n|\}\phi^2_{\eta_0}(z)d\nu(z)=0,
\end{multline}
where the last equality follows from the monotone convergence theorem and $P$ being in $\mathbf P$ ensuring Assumption \ref{as:onmu} (ii).
\end{proof}

\begin{lemma}\label{lem:pathdiff_ptws}
		Let $\eta\mapsto h_\eta$ be a curve in $L^2_\mu$ defined in \eqref{eq:tangent3}-\eqref{eq:heta}. Suppose Assumptions \ref{as:onY}-\ref{as:onw}, and \ref{as:onmu} hold.
	Suppose further that $P\in\mathbf P$. Then, there is a neighborhood $N$ of 0 such that for all $\eta_0\in N$,
\begin{align}
	\frac{\partial 	\upsilon(p,\Theta_0(P_\eta))}{\partial\eta}\bigg|_{\eta=\eta_0}
	=2\int\{w(z)p'\nabla_zm_{p,\eta_0}(z)-\upsilon(p,\Theta_0(P_{\eta_0}))+ p'l_{\eta_0}(z)\zeta_{p,\eta_0}(y_L,y_U,z)\}\dot h_{\eta_0}(x)h_{\eta_0}d\mu(x).
\end{align}
\end{lemma}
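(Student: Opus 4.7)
The plan is to differentiate
\[
S(\eta) \equiv \upsilon(p, \Theta_0(P_\eta)) = \int m_{p,\eta}(z)\, p'l_\eta(z)\, \phi^2_\eta(z)\, d\nu(z)
\]
at $\eta = \eta_0$ and then recast the derivative as an $L^2_\mu$ inner product with $2\dot h_{\eta_0}(x) h_{\eta_0}(x)$. I take $N$ to be the intersection of the neighborhoods supplied by Lemmas \ref{lem:onm}, \ref{lem:onw}, \ref{lem:ons}, \ref{lem:onv}, \ref{lem:onmLmU}, \ref{lem:support}, \ref{lem:dmlu}, and \ref{lem:intermediate}, so that all relevant regularity transfers from $P$ to $P_\eta$ for $\eta \in N$. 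Writing $m_{p,\eta}(z) = m_{L,\eta}(z) + 1\{p'l_\eta(z) > 0\}(m_{U,\eta}(z) - m_{L,\eta}(z))$ and viewing $S(\eta) = T(\eta,\eta,\eta,\eta)$, where the four arguments of $T$ govern respectively the regression functions $m_{j,\eta}$ ($j=L,U$), the indicator argument $p'l_\eta$, the score $l_\eta$, and the density $\phi^2_\eta$, the chain rule decomposes $S'(\eta_0) = T_1' + T_2' + T_3' + T_4'$.

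Lemma \ref{lem:intermediate} directly yields $T_2' = 0$. For $T_1'$, I substitute the closed-form derivatives of $m_{L,\eta}$ and $m_{U,\eta}$ from Lemma \ref{lem:onm} and collect terms using the definition \eqref{eq:zetaeta} of $\zeta_{p,\eta_0}$ to obtain
\[
T_1' = 2\int p'l_{\eta_0}(z)\,\zeta_{p,\eta_0}(x)\,\dot v_{\eta_0}(y_L,y_U|z)\,v_{\eta_0}(y_L,y_U|z)\,\phi^2_{\eta_0}(z)\,d\lambda(y_L,y_U)\,d\nu(z).
\]
For $T_3' + T_4'$, I exploit the identity $p'l_\eta(z)\phi^2_\eta(z) = -p'\nabla_z[w(z)\phi^2_\eta(z)]$ (which follows from $l_\eta = -\nabla_z w - w\nabla_z\log f_\eta$) and interchange $\partial/\partial\eta$ with $\nabla_z$ to write
\[
T_3' + T_4' = -2\int m_{p,\eta_0}(z)\, p'\nabla_z\bigl[w(z)\phi_{\eta_0}(z)\dot\phi_{\eta_0}(z)\bigr]\, d\nu(z).
\]
Then I apply integration by parts separately on the open regions $\{p'l_{\eta_0} > 0\}$ and $\{p'l_{\eta_0} \le 0\}$, on each of which $m_{p,\eta_0}$ is continuously differentiable by Lemma \ref{lem:onmLmU}. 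The boundary contribution from $\partial \mathcal Z$ vanishes because $w(z) f_{\eta_0}(z) = 0$ there (Lemma \ref{lem:onw}), and the interface $\{z : p'l_{\eta_0}(z) = 0\}$ has $\nu$-measure zero by Assumption \ref{as:onmu}(ii). This yields
\[
T_3' + T_4' = 2\int w(z)\, p'\nabla_z m_{p,\eta_0}(z)\, \bar\phi_{\eta_0}(z)\, \phi^2_{\eta_0}(z)\, d\nu(z),
\]
where $\nabla_z m_{p,\eta_0} = \Gamma(\nabla_z m_{L,\eta_0},\nabla_z m_{U,\eta_0},p'l_{\eta_0})$ is interpreted $\nu$-a.e.

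To conclude, I expand $\dot h_{\eta_0}(x) h_{\eta_0}(x) = \dot v_{\eta_0}(y_L,y_U|z) v_{\eta_0}(y_L,y_U|z) \phi^2_{\eta_0}(z) + v^2_{\eta_0}(y_L,y_U|z) \phi_{\eta_0}(z)\dot\phi_{\eta_0}(z)$ and verify three cancellations in the claimed right-hand side: (i) $\int \dot h_{\eta_0} h_{\eta_0}\, d\mu = 0$, since $\|h_{\eta_0}\|^2_{L^2_\mu} = 1$, so the constant $\upsilon(p,\Theta_0(P_{\eta_0}))$ drops out; (ii) the conditional centering of $\bar v$ gives $\int \dot v_{\eta_0}(y_L,y_U|z) v_{\eta_0}(y_L,y_U|z) d\lambda(y_L,y_U) = E_0[\bar v(X) \mid Z = z] = 0$, eliminating the $\dot v_{\eta_0}$ part of the $w p'\nabla_z m_{p,\eta_0}$ term and leaving precisely $T_3' + T_4'$; (iii) the first-order condition $E_{\eta_0}[q(Y_j - m_{j,\eta_0}(z)) \mid Z = z] = 0$ (Lemma \ref{lem:onm}) gives $E_{\eta_0}[\zeta_{p,\eta_0}(X) \mid Z = z] = 0$, eliminating the $\dot\phi_{\eta_0}$ part of the $p'l_{\eta_0}\zeta_{p,\eta_0}$ term and leaving precisely $T_1'$. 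The main technical obstacle is the integration-by-parts step for $T_3' + T_4'$: because $m_{p,\eta_0}$ is only piecewise smooth, jumping across the codimension-one interface $\{p'l_{\eta_0}=0\}$, controlling the surface contribution there requires Assumption \ref{as:onmu}(ii) combined with a smoothing argument---approximating $1\{\cdot>0\}$ by a smooth $\sigma_\epsilon$, carrying out integration by parts, and passing $\epsilon \to 0$---which relies on essentially the same envelope-type cancellation that drives Lemma \ref{lem:intermediate}, namely that $p'l_{\eta_0}$ vanishes on the interface so that the spurious interface term is annihilated in the limit.
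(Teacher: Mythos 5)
Your proposal follows the same architecture as the paper's proof: isolate the contribution of the switching indicator and kill it with Lemma \ref{lem:intermediate}, feed the closed-form derivatives of $m_{L,\eta},m_{U,\eta}$ from Lemma \ref{lem:onm} into the $\zeta_{p,\eta_0}$ term, and then reassemble the answer using exactly the three cancellations you list ($\int \dot h_{\eta_0}h_{\eta_0}d\mu=0$, $\int \dot v_{\eta_0}v_{\eta_0}d\lambda=0$, and $E_{\eta_0}[\zeta_{p,\eta_0}\mid Z]=0$); that final bookkeeping is identical to the paper's. The one genuine difference is how the score/density contribution $T_3'+T_4'$ is obtained. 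The paper first rewrites $\upsilon(p,\Theta_0(P_\eta))$ via \eqref{eq:expectation} and two integrations by parts as $\int p'l_{\eta_0}m_{p,\eta}\phi^2_{\eta_0}d\nu+2(\eta-\eta_0)\int w\,p'\nabla_zm_{p,\eta}\,\dot\phi_{\eta_0}\phi_{\eta_0}d\nu$, so that all density/score dependence sits in an explicitly linear-in-$(\eta-\eta_0)$ term whose derivative follows from dominated convergence and the a.e. continuity of $\eta\mapsto\nabla_zm_{p,\eta}$; you instead differentiate $p'l_\eta\phi^2_\eta=-p'\nabla_z(w\phi^2_\eta)$ under the integral and integrate by parts once at $\eta_0$. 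The two are algebraically equivalent, and your route saves the mean-value-theorem/dominated-convergence passage the paper uses for the difference quotients, at the cost of doing the integration by parts against the discontinuous $m_{p,\eta_0}$ explicitly.

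That last step is where your justification does not hold up as stated. The surface term you must control on $\{z:p'l_{\eta_0}(z)=0\}$ has integrand proportional to $(m_{U,\eta_0}-m_{L,\eta_0})\,w\,\phi_{\eta_0}\dot\phi_{\eta_0}\,(p'n)$, where $n$ is the unit normal (proportional to $\nabla_z(p'l_{\eta_0})$, not to $l_{\eta_0}$). Nothing in this integrand vanishes on the interface: the fact that $p'l_{\eta_0}=0$ there is what \emph{defines} the surface, it does not appear as a factor. The envelope-type cancellation you invoke genuinely drives Lemma \ref{lem:intermediate} and your $T_2'=0$, because there the integrand carries an explicit factor $p'l_{\eta_0}(z)$ supported where $|p'l_{\eta_0}|$ is $O(|\eta-\eta_0|)$; it does not transfer to the $T_3'+T_4'$ integration by parts, and a smoothing of the indicator will not make the surface contribution disappear. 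To be fair, the paper's own step \eqref{eq:pathdiff2} performs the equivalent integration by parts for the discontinuous $m_{p,\eta}$ with no more justification than yours, so your route is not substantively weaker; but if you keep your organization you should either import whatever argument legitimizes \eqref{eq:pathdiff2} or state the required identity $\int w\,p'\nabla_zm_{p,\eta_0}\,g\,d\nu=-\int m_{p,\eta_0}\,p'\nabla_z(wg)\,d\nu$ as an explicit hypothesis rather than claim it follows from Assumption \ref{as:onmu} (ii), which only gives that the interface has measure zero, not that surface integrals over it vanish.
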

\begin{proof}[\rm Proof of Lemma \ref{lem:pathdiff_ptws}]
We first show $\Gamma(\nabla_z m_{L,\eta}(z),\nabla_z m_{U,\eta}(z),p'l_\eta(z))$ is the gradient  of $m_{p,\eta}(z)$, $\mu-a.e.$
By Assumption \ref{as:onmu} (ii), it suffices to show the equality for $z$ such that $p'l_\eta(z)\ne 0$. Write 
\begin{align}
m_{p,\eta}&(z+h)-m_{p,\eta}(z)-\Gamma(\nabla_z m_{L,\eta}(z),\nabla_z m_{U,\eta}(z),p'l_\eta(z))'h\notag\\
&=1\{p'l_{\eta}(z+h)>0\}[(m_{U,\eta}(z+h)-m_{L,\eta}(z+h))-(m_{U,\eta}(z)-m_{L,\eta}(z))-(\nabla_z m_{U,\eta}(z)-\nabla_z m_{L,\eta}(z))'h]\notag\\
&\qquad+(1\{p'l_{\eta}(z+h)>0\}-1\{p'l_{\eta}(z)>0\})\times [(m_{U,\eta}(z)-m_{L,\eta}(z))-(\nabla_z m_{U,\eta}(z)-\nabla_z m_{L,\eta}(z))'h]\notag\\
&\qquad +(m_{L,\eta}(z+h)-m_{L,\eta}(z)-\nabla_z m_{L,\eta}(z)'h).\label{eq:mpeta}
\end{align}
 $\nabla_z m_{U,\eta}$ and $\nabla_z m_{L,\eta}$ being the gradients of $m_{U,\eta}$ and $m_{L,\eta}$, respectively implies that
\begin{align}
	(m_{U,\eta}(z+h)-m_{L,\eta}(z+h))-(m_{U,\eta}(z)-m_{L,\eta}(z))-(\nabla_z m_{U,\eta}(z)-\nabla_z m_{L,\eta}(z))'h=o(\|h\|),\notag\\
	m_{L,\eta}(z+h)-m_{L,\eta}(z)-\nabla_z m_{L,\eta}(z)'h=o(\|h\|).
\end{align}
By the continuity of $z\mapsto l_\eta(z)$, ensured by Assumption \ref{as:onw}, \eqref{eq:tangent5}, and $\bar\phi$ being continuously differentiable, there exists $\epsilon>0$ such that $1\{p'l_\eta(z+h)>0\}=1\{p'l_\eta(z)>0\}$ for all $h$ such that $\|h\|<\epsilon.$
These results and \eqref{eq:mpeta} ensure that
\begin{align}
m_{p,\eta}&(z+h)-m_{p,\eta}(z)-\Gamma(\nabla_z m_{L,\eta}(z),\nabla_z m_{U,\eta}(z),p'l_\eta(z))'h=o(\|h\|).	
\end{align}
In what follows, we therefore simply write $\nabla_zm_{p,\eta}(z)=\Gamma(\nabla_z m_{L,\eta}(z),\nabla_z m_{U,\eta}(z),p'l_\eta(z)).$

Next, we show that $\eta\mapsto \nabla_z m_{p,\eta}(z)$ is continuous for almost all $z\in\mathcal Z.$ By Lemma \ref{lem:dmlu}, $\eta\mapsto\nabla_zm_{L,\eta}$ and $\eta\mapsto \nabla_z m_{U,\eta}$ are continuous. Further, if $\eta_n\to\eta_0$, then
\begin{align*}
\mu(\{x:\lim_{n\to\infty}1\{p'l_{\eta_n}(z)>0\}=1\{p'l_{\eta_0}(z)>0\}\})=1	
\end{align*}
by the continuity of $\eta\mapsto p'l_\eta(z)$ ensured by \eqref{eq:tangent5} and $\mu(\{x:p'l_{\eta_0}(z)=0\})=0$ by Assumption \ref{as:onmu} (ii). Hence, $\eta\mapsto \nabla_z m_{p,\eta}(z)$ is continuous $a.e.$

By Theorem  \ref{thm:thetabounds}, integration by parts, and \eqref{eq:expectation}, we may write
	\begin{align}
	\upsilon(p,\Theta_0(P_\eta))	&=\int w(z)p'\nabla_z m_{p,\eta}(z)\phi_{\eta_0}^2(z)d\nu(z)+2(\eta-\eta_0)\int w(z)p'\nabla_z m_{p,\eta}(z)\dot\phi_{\eta_0}(z)\phi_{\eta_0}(z)d\nu(z)\\
		&=\int p'l_{\eta_0}(z)m_{p,\eta}(z)\phi^2_{\eta_0}(z)d\nu(z)+2(\eta-\eta_0)\int w(z)p'\nabla_z m_{p,\eta}(z)\dot\phi_{\eta_0}(z)\phi_{\eta_0}(z)d\nu(z).\label{eq:pathdiff2}
	\end{align}
This and $\upsilon(p,\Theta_0(P_{\eta_0}))=\int p'l_{\eta_0}(z)m_{p,\eta_0}(z)\phi^2_{\eta_0}(z)d\nu(z)$ by Theorem \ref{thm:thetabounds} imply
	\begin{align}
		&\frac{\partial 	\upsilon(p,\Theta_0(P_\eta))}{\partial\eta}\bigg|_{\eta=\eta_0}\notag\\
		&\hspace{0.3in}=\lim_{\eta\to\eta_0}\frac{1}{\eta-\eta_0}\int p'l_{\eta_0}(z)(m_{p,\eta}(z)-m_{p,\eta_0}(z))\phi^2_{\eta_0}(z)d\nu(z)+2\lim_{\eta\to \eta_0}\int w(z)p'\nabla_z m_{p,\eta}(z)\dot\phi_{\eta_0}(z)\phi_{\eta_0}(z)d\nu(z),\notag\\
&\hspace{0.3in}=\lim_{\eta\to\eta_0}\frac{1}{\eta-\eta_0}\int p'l_{\eta_0}(z)(m_{p,\eta}(z)-m_{p,\eta_0}(z))\phi^2_{\eta_0}(z)d\nu(z)+2\int w(z)p'\nabla_z m_{p,\eta_0}(z)\dot\phi_{\eta_0}(z)\phi_{\eta_0}(z)d\nu(z),\label{eq:pathdiff3'''}
	\end{align}
	where the second equality follows from $w$, $\nabla_z m_{p,\eta}$ and $\bar\phi_{\eta_0}$ being bounded by Assumption \ref{as:onw} and Lemma \ref{lem:onm}, which allows us to apply the dominated convergence theorem, and the almost everywhere continuity of $\eta\mapsto \nabla_z m_{p,\eta}(z)$.
The first term on the right hand side of \eqref{eq:pathdiff3'''} may be further rewritten as
\begin{align}
\lim_{\eta\to\eta_0}&\frac{1}{\eta-\eta_0}\int p'l_{\eta_0}(z)(m_{p,\eta}(z)-m_{p,\eta_0})\phi^2_{\eta_0}(z)d\nu(z)	\\
&=\lim_{\eta\to\eta_0}\frac{1}{\eta-\eta_0}\int p'l_{\eta_0}(z)1\{p'l_{\eta_0}(z)>0\}(m_{U,\eta}(z)-m_{U,\eta_0}(z))\phi^2_{\eta_0}(z)d\nu(z)\label{eq:decom1}\\
&\quad+\lim_{\eta\to\eta_0}\frac{1}{\eta-\eta_0}\int p'l_{\eta_0}(z)1\{p'l_{\eta_0}(z)\le 0\}(m_{L,\eta}(z)-m_{L,\eta_0}(z))\phi^2_{\eta_0}(z)d\nu(z)\label{eq:decom2}\\
&\quad+\lim_{\eta\to\eta_0}\frac{1}{\eta-\eta_0}\int p'l_{\eta_0}(z)(1\{p'l_{\eta}(z)>0\}-1\{p'l_{\eta_0}(z)>0\})(m_{U,\eta_0}(z)-m_{L,\eta_0}(z))\phi^2_{\eta_0}(z)d\nu(z).\label{eq:pathdiff3''}
\end{align}
For \eqref{eq:decom1}, by the mean value theorem, we have
\begin{align}
\lim_{\eta\to\eta_0}\frac{1}{\eta-\eta_0}\int p'l_{\eta_0}(z)1\{p'l_{\eta_0}(z)>0\}&(m_{U,\eta}(z)-m_{U,\eta_0})\phi^2_{\eta_0}(z)d\nu(z)\notag\\
&=\lim_{\eta\to\eta_0}\int p'l_{\eta_0}(z)1\{p'l_{\eta_0}(z)>0\} \frac{\partial}{\partial \eta}m_{U,\eta}(z)\Big|_{\eta=\bar\eta(z,\eta)}\phi^2_{\eta_0}(z)d\nu(z)\notag\\
&=\int p'l_{\eta_0}(z)1\{p'l_{\eta_0}(z)>0\} \frac{\partial}{\partial \eta}m_{U,\eta}(z)\Big|_{\eta=\eta_0}\phi^2_{\eta_0}(z)d\nu(z),\label{eq:pathdiff3''''}
\end{align}
where the first equality holds for each $p$ for some $\bar\eta(p,\eta)$ a convex combination of $\eta$ and $\eta_0$. The second equality follows from  Lemmas \ref{lem:onw} and \ref{lem:dmlu}, $\|p\|=1$, and Assumption \ref{as:onP1} (ii) justifying the use of the dominated convergence theorem.
Similarly, for \eqref{eq:decom2}, we have
\begin{multline}
\lim_{\eta\to\eta_0}\frac{1}{\eta-\eta_0}\int p'l_{\eta_0}(z)1\{p'l_{\eta_0}(z)\le 0\}(m_{L,\eta}(z)-m_{L,\eta_0})\phi^2_{\eta_0}(z)d\nu(z)\\
=\int p'l_{\eta_0}(z)1\{p'l_{\eta_0}(z)\le 0\} \frac{\partial}{\partial \eta}m_{L,\eta}(z)\Big|_{\eta=\eta_0}\phi^2_{\eta_0}(z)d\nu(z)~.
\end{multline}
Hence, by  \eqref{eq:pathdiff3'''}-\eqref{eq:pathdiff3''''}, integration by parts, and \eqref{eq:pathdiff3''} being 0 by Lemma \ref{lem:intermediate}, we obtain
	\begin{align}
		\frac{\partial 	\upsilon(p,\Theta_0(P_\eta))}{\partial\eta}\bigg|_{\eta=\eta_0}
		&=2\int p'l_{\eta_0}(z)\zeta_{p,\eta_0}(y_L,y_U,z)\dot v_{\eta_0}(y_L,y_U|z)v_{\eta_0}(y_L,y_U,z)d\lambda(y_L,y_U)\phi^2_{\eta_0}(z)d\nu(z)\\
		&\qquad+2\int w(z)p'\nabla_zm_{p,\eta_0}(z)\dot\phi_{\eta_0}(z)\phi_{\eta_0}(z)d\nu(z)~.
	\end{align}
	Using $\dot h_{\eta_0}=\dot v_{\eta_0}\phi_{\eta_0}+v_{\eta_0}\dot\phi_{\eta_0}$, $\int v_{\eta_0}^2d\lambda=1$, and $\int \dot h_{\eta_0}h_{\eta_0}d\mu=0$, we may rewrite this as
	\begin{align}
		\frac{\partial 	\upsilon(p,\Theta_0(P_\eta))}{\partial\eta}\bigg|_{\eta=\eta_0}
		=2\int\{w(z)p'\nabla_zm_{p,\eta_0}(z)-\upsilon(p,\Theta_0(P_{\eta_0}))+ p'l_{\eta_0}(z)\zeta_{p,\eta_0}(y_L,y_U,z)\}\dot h_{\eta_0}(x)h_{\eta_0}d\mu(x)~.
	\end{align}
	Therefore, the conclusion of the lemma follows.
\end{proof}

\begin{lemma}\label{lem:bdd}
Let $\eta\mapsto h_\eta$ be a curve in $L^2_\mu$ defined in \eqref{eq:tangent3}-\eqref{eq:heta}. Suppose Assumptions \ref{as:onY}-\ref{as:onw}, and \ref{as:onmu} hold. Suppose further that $P\in\mathbf P.$ Then,
there is a neighborhood $N$ of $\eta=0$ such that	the map $(p,\eta_0)\mapsto\frac{\partial 	\upsilon(p,\Theta_0(P_\eta))}{\partial\eta}\big|_{\eta=\eta_0}$ is uniformly bounded on $\mathbb S^\ell\times N$.
\end{lemma}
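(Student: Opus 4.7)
The plan is to take the explicit formula for the pathwise derivative from Lemma \ref{lem:pathdiff_ptws},
\begin{align}
\frac{\partial\, \upsilon(p, \Theta_0(P_\eta))}{\partial \eta}\Big|_{\eta = \eta_0} = 2\int G_{p, \eta_0}(x)\, \dot h_{\eta_0}(x)\, h_{\eta_0}(x)\, d\mu(x),
\end{align}
where $G_{p, \eta_0}(x) \equiv w(z) p'\nabla_z m_{p,\eta_0}(z) - \upsilon(p, \Theta_0(P_{\eta_0})) + p'l_{\eta_0}(z) \zeta_{p,\eta_0}(y_L, y_U, z)$, and to factor the integral through a sup-norm bound on $G_{p,\eta_0}$ combined with a Cauchy--Schwarz estimate on the product $\dot h_{\eta_0} h_{\eta_0}$.

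The main work will be to produce a constant $M$, independent of $(p, \eta_0) \in \mathbb S^\ell \times N$ after shrinking $N$, such that $\|G_{p, \eta_0}\|_{L^\infty_\mu} \le M$. For the first summand I would use that $w$ is bounded (Assumption \ref{as:onw}) and that Lemma \ref{lem:onmLmU} delivers a continuous, hence uniformly bounded, map $(z, \eta_0) \mapsto \nabla_z m_{j, \eta_0}(z)$ on the compact set $\mathcal Z \times N$ for $j = L, U$; the pointwise selection $\nabla_z m_{p, \eta_0}$ inherits this bound. For the centering term $\upsilon(p, \Theta_0(P_{\eta_0}))$ I would apply the estimate \eqref{eq:thetabounds1} now for $P_{\eta_0}$, together with Lemma \ref{lem:onw}'s uniform control of $\|l_{\eta_0}\|_\infty$ on $N$. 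For the third summand I would combine $\|l_{\eta_0}\|_\infty$ bounded on $N$ (Lemma \ref{lem:onw}) with the estimate $|\zeta_{p, \eta_0}| \le \bar\epsilon^{-1}\sup_{u \in D'} |q(u)|$, which follows from Lemma \ref{lem:ons} (uniform lower bound $\bar\epsilon$ on $|r_{j, \eta_0}|$), Lemma \ref{lem:support} (common compact support $D'$ for the residuals $Y_j - m_{j, \eta_0}(Z)$), and the boundedness of $q$ in Assumption \ref{as:onY} (ii).

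Once $\|G_{p, \eta_0}\|_{L^\infty_\mu} \le M$ is in hand, Cauchy--Schwarz gives
\begin{align}
\Big|\int G_{p, \eta_0}(x)\, \dot h_{\eta_0}(x)\, h_{\eta_0}(x)\, d\mu(x)\Big| \le M\, \|h_{\eta_0}\|_{L^2_\mu}\, \|\dot h_{\eta_0}\|_{L^2_\mu}.
\end{align}
Here $\|h_{\eta_0}\|_{L^2_\mu} = 1$ since $h_{\eta_0}^2$ is a density with respect to $\mu$, and continuous Fr\'echet differentiability of $\eta \mapsto h_\eta$ (Definition \ref{def:curve}) makes $\eta \mapsto \|\dot h_\eta\|_{L^2_\mu}$ continuous, so shrinking $N$ to a closed interval around $0$ yields $C \equiv \sup_{\eta_0 \in N} \|\dot h_{\eta_0}\|_{L^2_\mu} < \infty$. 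The uniform bound $2MC$ on $\mathbb S^\ell \times N$ then follows. The main obstacle is the bookkeeping across the previous lemmas to confirm that each component bound really is uniform in $\eta_0$ rather than merely pointwise; fortunately, Lemmas \ref{lem:onm}--\ref{lem:dmlu} have all been set up with exactly this uniformity on a common neighborhood of $0$, so no new technical work is needed once they are assembled.
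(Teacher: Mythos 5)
Your proposal is correct and follows essentially the same route as the paper's proof: both start from the explicit derivative formula in Lemma \ref{lem:pathdiff_ptws}, bound the integrand uniformly on $\mathbb S^\ell\times N\times\mathcal X$ using Assumptions \ref{as:onY}--\ref{as:onw} together with Lemmas \ref{lem:onm}, \ref{lem:onw}, \ref{lem:ons}, and \ref{lem:support}, and finish with Cauchy--Schwarz against $\|\dot h_{\eta_0}\|_{L^2_\mu}\|h_{\eta_0}\|_{L^2_\mu}$. The only cosmetic difference is that the paper splits the integral by the triangle inequality and controls the $p'l_{\eta_0}(z)\zeta_{p,\eta_0}$ piece via $E_{\eta_0}[\|l_{\eta_0}(Z)\|^2]^{1/2}$ rather than a single sup-norm bound on the whole integrand, which changes nothing of substance.
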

\begin{proof}[\rm Proof of Lemma \ref{lem:bdd}]
 By Lemma \ref{lem:pathdiff_ptws} and the triangle inequality, 
\begin{multline}
	\Big|\frac{\partial 	\upsilon(p,\Theta_0(P_\eta))}{\partial\eta}\Big|_{\eta=\eta_0}\Big|
	=2\Big|\int\{w(z)p'\nabla_zm_{p,\eta_0}(z)-\upsilon(p,\Theta_0(P_{\eta_0}))\}\dot h_{\eta_0}(x)h_{\eta_0}d\mu(x)\Big|\\
+2\Big|\int	p'l_{\eta_0}(z)\zeta_{p,\eta_0}(y_L,y_U,z)\dot h_{\eta_0}(x)h_{\eta_0}(x)d\mu(x)\Big|.\label{eq:bdd1}
\end{multline}
By Assumption \ref{as:onw} and $\|p\|=1$, uniformly on $N$, 
\begin{align}
\|w(z)p'\nabla_{z}m_{p,\eta}(z)\|_{L^\infty_\mu}\le\sup_{z\in\mathcal Z}|w(z)|\times\| \nabla_{z}m_{p,\eta}(z)\|_{L^\infty_\mu}\le \sup_{z\in\mathcal Z}|w(z)|\times(\sup_{z\in\mathcal Z}|\nabla_{z}m_{L,\eta}(z)|+\sup_{z\in\mathcal Z}|\nabla_{z}m_{U,\eta}(z)|)<\infty.\label{eq:bdd1.5}
\end{align}
where the last inequality follows from  Lemma \ref{lem:onm}. 
 This ensures that $(p,\eta)\mapsto\upsilon(p,\Theta_0(P_\eta))$ is uniformly bounded on $\mathbb S^\ell\times N$. We therefore have
\begin{multline}
\Big|\int\{w(z)p'\nabla_zm_{p,\eta_0}(z)-\upsilon(p,\Theta_0(P_{\eta_0}))\}\dot h_{\eta_0}(x)h_{\eta_0}d\mu(x)\Big|\\
\le \sup_{(p,\eta_0,z)\in \mathbb S^\ell\times N\times\mathcal Z}|w(z)p'\nabla_zm_{p,\eta_0}(z)-\upsilon(p,\Theta_0(P_{\eta_0}))|\|\dot h_{\eta_0}\|_{L^2_\mu}\|h_{\eta_0}\|_{L^2_\mu}<\infty.\label{eq:bdd2}
\end{multline}
Further, by  Assumption \ref{as:onY} (ii) and Lemmas \ref{lem:ons} and \ref{lem:support}, it follows that $\sup_{x\in\mathcal X}|\zeta_{p,\eta_0}(x)|\le 2\bar\epsilon^{-1}\sup_{u\in D'}|q(u)|<\infty$ for all $(p,\eta_0)\in \mathbb S^\ell\times N$. Therefore, by the triangle and Cauchy-Schwarz inequalities and Lemma \ref{lem:onw}, we have
\begin{multline}
	\Big|\int	p'l_{\eta_0}(z)\zeta_{p,\eta_0}(y_L,y_U,z)\dot h_{\eta_0}(x)h_{\eta_0}(x)d\mu(x)\Big|\\\le 2\bar\epsilon^{-1}\sup_{u\in D'}q(u)\int \|l_{\eta_0}(z)\|\dot h_{\eta_0}(x)h_{\eta_0}(x)d\mu(x)
	\le 2\bar\epsilon^{-1}\sup_{u\in D'}q(u)\|\dot h_{\eta_0}\|_{L^2_\mu}E_{\eta_0}[\|l_{\eta_0}(z)\|^2]^{1/2}<\infty.\label{eq:bdd3}
\end{multline}
By \eqref{eq:bdd1}, \eqref{eq:bdd2}, and \eqref{eq:bdd3}, the conclusion of the lemma follows.
\end{proof}

\begin{lemma}\label{lem:cts}
Let $\eta\mapsto h_\eta$ be a curve in $L^2_\mu$ defined in \eqref{eq:tangent3}-\eqref{eq:heta}. Suppose Assumptions \ref{as:onY}-\ref{as:onw}, and \ref{as:onmu} hold. Suppose further that $P\in\mathbf P.$
	Then,
	there is a neighborhood $N$ of $0$ such that	the map $(p,\eta_0)\mapsto\frac{\partial 	\upsilon(p,\Theta_0(P_\eta))}{\partial\eta}\big|_{\eta=\eta_0}$ is continuous at all $(p,\eta_0)\in \mathbb S^\ell\times N.$
\end{lemma}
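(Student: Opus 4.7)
The plan is to apply the dominated convergence theorem, term by term, to the explicit formula for the pathwise derivative obtained in Lemma \ref{lem:pathdiff_ptws}:
\begin{align*}
\frac{\partial \upsilon(p,\Theta_0(P_\eta))}{\partial\eta}\Big|_{\eta=\eta_0}
=2\!\int\!\{w(z)p'\nabla_zm_{p,\eta_0}(z)-\upsilon(p,\Theta_0(P_{\eta_0}))+p'l_{\eta_0}(z)\zeta_{p,\eta_0}(x)\}\dot h_{\eta_0}(x)h_{\eta_0}(x)\,d\mu(x).
\end{align*}
Fix an arbitrary sequence $(p_n,\eta_n)\to(p_0,\eta_0)$ in $\mathbb S^\ell\times N$. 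The goal is to show that the integrand on the right converges $\mu$-a.e. and is dominated by an integrable function.

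First I would establish the $\mu$-a.e. pointwise convergence of each factor. By \eqref{eq:tangent5} and Assumption \ref{as:onw}, $(p,\eta)\mapsto p'l_\eta(z)$ is continuous, so $p_n'l_{\eta_n}(z)\to p_0'l_{\eta_0}(z)$ pointwise on $\mathcal Z$. By Assumption \ref{as:onmu} (ii) applied with $F(z)=p_0'l_{\eta_0}(z)$, we have $\mu(\{x:p_0'l_{\eta_0}(z)=0\})=0$, hence
\begin{align*}
1\{p_n'l_{\eta_n}(z)>0\}\ \longrightarrow\ 1\{p_0'l_{\eta_0}(z)>0\},\qquad \mu\text{-a.e.}
\end{align*}
Combined with continuity of $\eta\mapsto\nabla_z m_{j,\eta}(z)$ (Lemma \ref{lem:onmLmU}), continuity of $\eta\mapsto m_{j,\eta}(z)$ (Lemma \ref{lem:onm}), continuity of $\eta\mapsto r_{j,\eta}(z)$ (which follows from \eqref{eq:tangent7} and the lower bound in Lemma \ref{lem:ons}), and Assumption \ref{as:onY} (ii) giving almost everywhere continuity of $q$, the definitions of $\nabla_z m_{p,\eta}$ and $\zeta_{p,\eta}$ via $\Gamma$ yield
\begin{align*}
w(z)p_n'\nabla_z m_{p_n,\eta_n}(z)\to w(z)p_0'\nabla_z m_{p_0,\eta_0}(z),\qquad p_n'l_{\eta_n}(z)\zeta_{p_n,\eta_n}(x)\to p_0'l_{\eta_0}(z)\zeta_{p_0,\eta_0}(x),
\end{align*}
$\mu$-almost everywhere. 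Continuity of $\eta\mapsto h_\eta$ and $\eta\mapsto \dot h_\eta$ in $L^2_\mu$ is built into Definition \ref{def:curve}, and upon passing to a subsequence one obtains $\mu$-a.e.\ convergence of $\dot h_{\eta_n}h_{\eta_n}$. Finally, the scalar term $\upsilon(p_n,\Theta_0(P_{\eta_n}))\to\upsilon(p_0,\Theta_0(P_{\eta_0}))$ is handled by the same dominated convergence argument applied to its representation $\int m_{p,\eta_0}(z)p'l_{\eta_0}(z)\phi^2_{\eta_0}(z)d\nu(z)$.

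Next I would supply an $L^1_\mu$ envelope for the integrand. Exactly as in the proof of Lemma \ref{lem:bdd}, Assumption \ref{as:onw}, Lemma \ref{lem:onm}, Assumption \ref{as:onY} (ii), Lemmas \ref{lem:ons} and \ref{lem:support} give uniform (in $(p,\eta)\in\mathbb S^\ell\times N$) essential supremum bounds on $w(z)p'\nabla_z m_{p,\eta}(z)$, on $\upsilon(p,\Theta_0(P_\eta))$, and on $\zeta_{p,\eta}$. Therefore the integrand is bounded by $C(1+\|l_{\eta_0}(z)\|)|\dot h_{\eta_0}(x)||h_{\eta_0}(x)|$, which is in $L^1_\mu$ by Cauchy--Schwarz, $\|\dot h_{\eta_0}\|_{L^2_\mu}<\infty$, and Lemma \ref{lem:onw}. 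Applying dominated convergence (along the chosen subsequence, then invoking the subsequence principle to recover convergence of the original sequence) delivers continuity of the pathwise derivative at $(p_0,\eta_0)$.

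The main obstacle is the discontinuity of the $\Gamma$ operator, i.e.\ of the indicators $1\{p'l_\eta(z)>0\}$, as functions of $(p,\eta)$. This is exactly what Assumption \ref{as:onmu} (ii) is designed to neutralize: it ensures that the problematic set $\{p_0'l_{\eta_0}(z)=0\}$ has $\mu$-measure zero, so pointwise $\mu$-a.e.\ convergence of the indicator (and hence of $\Gamma$) survives. Once this is in place the rest of the argument is a routine application of dominated convergence using the bounds already derived for Lemma \ref{lem:bdd}.
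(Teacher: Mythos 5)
Your overall strategy is the same as the paper's: establish $\mu$-a.e.\ continuity of $(p,\eta)\mapsto w(z)p'\nabla_z m_{p,\eta}(z)-\upsilon(p,\Theta_0(P_\eta))$ and of $(p,\eta)\mapsto p'l_\eta(z)\zeta_{p,\eta}(x)$, using Assumption \ref{as:onmu} (ii) to kill the null set $\{p'l_{\eta_0}(z)=0\}$ where the indicators jump, and then conclude by dominated convergence using the uniform bounds already assembled for Lemma \ref{lem:bdd}. The one place where your write-up does not quite go through as stated is the treatment of the factor $\dot h_{\eta_n}h_{\eta_n}$: the envelope you propose, $C(1+\|l_{\eta_0}(z)\|)|\dot h_{\eta_0}(x)h_{\eta_0}(x)|$, is the \emph{limit} of the integrands, not a function dominating the integrands along the sequence, so the dominated convergence theorem cannot be invoked against it directly. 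Passing to a subsequence gives a.e.\ convergence of $\dot h_{\eta_n}h_{\eta_n}$ but still no integrable majorant unless you additionally extract a further subsequence dominated in $L^2_\mu$ (the Riesz--Fischer trick) or invoke a generalized DCT with $L^1$-convergent majorants. The paper sidesteps this entirely: it writes each term as a difference, peels off $\dot h_{\eta_n}h_{\eta_n}-\dot h_{\eta_0}h_{\eta_0}$ by the triangle and Cauchy--Schwarz inequalities (which vanishes by continuous Fr\'{e}chet differentiability of $\eta\mapsto h_\eta$ together with the uniform sup-norm bounds on $\gamma_{p,\eta}$ and $p'l_\eta\zeta_{p,\eta}$), and then applies dominated convergence only to integrals against the \emph{fixed} integrable density $\dot h_{\eta_0}h_{\eta_0}\,d\mu$, for which your a.e.-continuity observations suffice. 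Either repair works; the paper's splitting is the cleaner route and is worth adopting.
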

\begin{proof}[\rm Proof of Lemma \ref{lem:cts}]
Let $(p_n,\eta_n)$ be a sequence such that $(p_n,\eta_n)\to(p,\eta_0).$ 
For each $(p,\eta,z)\in\mathbb S^\ell\times N\times\mathcal Z$, let $\gamma_{p,\eta}(z)\equiv w(z)p'\nabla_zm_{p,\eta}(z)-\upsilon(p,\Theta_0(P_{\eta})).$ We first show that  $(p,\eta)\mapsto \gamma_{p,\eta}(z)$  and $(p,\eta)\mapsto \zeta_{p,\eta}(x)$ are continuous $a.e$. By Lemma \ref{lem:dmlu}, $\eta\mapsto\nabla_zm_{L,\eta}(z)$ and $\eta\mapsto \nabla_z m_{U,\eta}(z)$ are continuous for every $z\in\mathcal Z$. Further, if $(p_n,\eta_n)\to(p,\eta_0)$, then
\begin{align*}
\mu(\{x:\lim_{n\to\infty}1\{p_n'l_{\eta_n}(z)>0\}=1\{p'l_{\eta_0}(z)>0\}\})=1	
\end{align*}
by the continuity of $(p,\eta)\mapsto p'l_\eta(z)$ implied by \eqref{eq:tangent5} and $\mu(\{x:p'l_{\eta_0}(z)=0\})=0$ by Assumption \ref{as:onmu} (ii). Hence, $(p,\eta)\mapsto w(z)p'\nabla_zm_{p,\eta}(z)$ is continuous $a.e$. Note that by \eqref{eq:expectation},
$\upsilon(p,\Theta_0(P_\eta))=\int w(z)p'\nabla_zm_{p,\eta}(z)(1+2(\eta-\eta_0)\bar\phi_{\eta_0}(z))\phi_{\eta_0}^2(z)d\nu(z)$. Hence, as $(p_n,\eta_n)\to(p,\eta_0)$, it follows that 
\begin{multline}
\lim_{n\to\infty}	\upsilon(p_n,\Theta_0(P_{\eta_n}))=\lim_{n\to\infty}\int w(z)p_n'\nabla_zm_{p_n,\eta_n}(z)(1+2(\eta_n-\eta_0)\bar\phi_{\eta_0}(z))\phi_{\eta_0}^2(z)d\nu(z)\\
=\int \lim_{n\to\infty}w(z)p_n'\nabla_zm_{p_n,\eta_n}(z)(1+2(\eta_n-\eta_0)\bar\phi_{\eta_0}(z))\phi_{\eta_0}^2(z)d\nu(z)= \upsilon(p,\Theta_0(P_{\eta_0})),
\end{multline}
where the second equality follows from $w(z)p'\nabla_zm_{p,\eta}(z)$ and $\bar\phi(z)$ being bounded on $\mathbb S^\ell\times N\times\mathcal Z$  and  an application of the dominated convergence theorem, while the last equality follows from the continuity of 
$(p,\eta)\mapsto w(z)p'\nabla_zm_{p,\eta}(z)$  for almost all $z$. Hence, $(p,\eta)\mapsto \gamma_{p,\eta}(z)$ is continuous $a.e.$

The maps $(p,\eta)\mapsto r_{j,\eta}^{-1}(z)q(y_j-m_{j,\eta}(z)),j=L,U$ are continuous for almost all $x$ by Assumption \ref{as:onY} (ii), \eqref{eq:tangent7}, and $\eta\mapsto m_{j,\eta}(z)$ being continuous for almost all $z$ for $j=L,U$ by Lemma \ref{lem:dmlu}. 
Since $(p,\eta)\mapsto 1\{p'l_\eta(z)>0\}\}$ is continuous for almost all $z$ as shown above, it then follows that $(p,\eta)\mapsto \zeta_{p,\eta}(x)$ is continuous for almost all $x$.

Given these results, we show $\frac{\partial 	\upsilon(p_n,\Theta_0(P_\eta))}{\partial\eta}|_{\eta=\eta_n}-	\frac{\partial 	\upsilon(p,\Theta_0(P_\eta))}{\partial\eta}|_{\eta=\eta_0}\to 0$ as $\eta_n\to \eta_0.$ Toward this end, we first note that
\begin{align}
\frac{\partial 	\upsilon(p_n,\Theta_0(P_\eta))}{\partial\eta}&\bigg|_{\eta=\eta_n}-	\frac{\partial 	\upsilon(p,\Theta_0(P_\eta))}{\partial\eta}\bigg|_{\eta=\eta_0}\label{eq:cts0}\\
	&=2\int\gamma_{p_n,\eta_n}(z)\dot h_{\eta_n}(x)h_{\eta_n}d\mu(x)-2\int\gamma_{p,\eta_0}(z)\dot h_{\eta_0}(x)h_{\eta_0}d\mu(x)\\
&\qquad+2\int p_n'l_{\eta_n}(z)\zeta_{p_n,\eta_n}(x)\dot h_{\eta_n}(x)h_{\eta_n}d\mu(x)	-2\int p'l_{\eta_0}(z)\zeta_{p,\eta_0}(x)\dot h_{\eta_0}(x)h_{\eta_0}d\mu(x).\label{eq:cts1}
\end{align}
By Lemma \ref{lem:bdd}, the Cauchy-Schwarz and triangle inequalities, 
\begin{align}
|\int\gamma_{p_n,\eta_n}(z)\dot h_{\eta_n}(x)&h_{\eta_n}d\mu(x)-\int\gamma_{p,\eta_0}(z)\dot h_{\eta_0}(x)h_{\eta_0}d\mu(x)	|\\
&\le \sup_{(p,\eta,z)\in\mathbb S^\ell\times N\times\mathcal Z}|\gamma_{p,\eta}(z)|(\|\dot h_{\eta_n}\|_{L^2_\mu}\|h_{\eta_n}-h_{\eta_0}\|_{L^2_\mu}+\|\dot h_{\eta_n}-\dot h_{\eta_0}\|_{L^2_\mu}\| h_{\eta_0}\|_{L^2_\mu})\\
&\qquad+|\int\{\gamma_{p_n,\eta_n}(z)-\gamma_{p,\eta_0}(z)\}\dot h_{\eta_0}(z)h_{\eta_0}(z)\mu(x)=o(1),
\end{align}
where the last equality follows from $\eta\mapsto h_\eta$ being continuously Fr\'{e}chet differentiable, $(p,\eta,z)\mapsto \gamma_{p,\eta}(z)$ being bounded on $\mathbb S^\ell\times N\times\mathcal Z$ as shown in Lemma \ref{lem:bdd},  the dominated convergence theorem, and $(p,\eta)\mapsto \gamma_{p,\eta}(z)$ being continuous $a.e.$

Further, we may write \eqref{eq:cts1} as
\begin{align}
\int p_n'l_{\eta_n}(z)\zeta_{p_n,\eta_n}(x)\dot h_{\eta_n}(x)h_{\eta_n}d\mu(x)	-&\int p'l_{\eta_0}(z)\zeta_{p,\eta_0}(x)\dot h_{\eta_0}(x)h_{\eta_0}d\mu(x)\\
&=\int p_n'l_{\eta_n}(z)\zeta_{p_n,\eta_n}(x)(\dot h_{\eta_n}(x)h_{\eta_n}(x)-\dot h_{\eta_0}(x)h_{\eta_0}(x))d\nu(x)\\
&\quad +\int 	p_n'l_{\eta_n}(z)(\zeta_{p_n,\eta_n}(x)-\zeta_{p,\eta_0}(x))\dot h_{\eta_0}(x)h_{\eta_0}(x)d\mu(x)\\
&\quad+\int (p_n'l_{\eta_n}(z)-p'l_{\eta_0}(z))\zeta_{p,\eta_0}(x)\dot h_{\eta_0}(x)h_{\eta_0}(x)d\mu(x).
\end{align}
By Assumptions \ref{as:onw} and \ref{as:onP1} (ii), \eqref{eq:tangent5}, and $\bar\phi$ being continuously differentiable, $(\eta,z)\mapsto\|l_\eta(z)\|$ is continuous on $N\times\mathcal Z$. Hence, it achieves a finite maximum on $N\times\mathcal Z$. Further, by Lemmas \ref{lem:ons}, \ref{lem:support} and Assumption \ref{as:onY} (ii),  $\sup_{x\in\mathcal X}|\zeta_{p,\eta_0}(x)|\le 2\bar\epsilon^{-1}\sup_{u\in D'}|q(u)|<\infty$ for all $(p,\eta_0)\in \mathbb S^\ell\times N$.
By Cauchy-Schwarz inequality, and $\|p_n\|\le 1$ for all $n$, it then follows that
\begin{multline}
\int p_n'l_{\eta_n}(z)(\dot h_{\eta_n}(x)h_{\eta_n}(x)-\dot h_{\eta_0}(x)h_{\eta_0}(x))d\nu(x)\\
	\le \sup_{(\eta,z)\in N\times\mathcal Z}\|l_\eta(z)\|\sup_{(p,\eta)\in \mathbb S^\ell\times N}\sup_{x\in\mathcal X}|\zeta_{p,\eta}(x)|(\|\dot h_{\eta_n}\|_{L^2_\mu}\|h_{\eta_n}-h_{\eta_0}\|_{L^2_\mu}+\|\dot h_{\eta_n}-\dot h_{\eta_0}\|_{L^2_\mu}\| h_{\eta_0}\|_{L^2_\mu})=o(1),
\end{multline}
where the last equality follows from $\eta\mapsto h_\eta$ being continuously Fr\'echet differentiable.
Further, by the almost everywhere continuity of $(p,\eta)\mapsto p'l_\eta(\zeta_{p,\eta}-\zeta_{p,\eta_0}),$ 
\begin{multline}
\lim_{n\to\infty}\int 	p_n'l_{\eta_n}(z)(\zeta_{p_n,\eta_n}(x)-\zeta_{p,\eta_0}(x))\dot h_{\eta_0}(x)h_{\eta_0}(x)d\mu(x)	\\
=\int \lim_{n\to\infty}	p_n'l_{\eta_n}(z)(\zeta_{p_n,\eta_n}(x)-\zeta_{p,\eta_0}(x))\dot h_{\eta_0}(x)h_{\eta_0}(x)d\mu(x)	=0.
\end{multline}
where the first equality follows from the dominated convergence theorem. Finally, again by the dominated convergence theorem,
\begin{multline}
\lim_{n\to\infty}	\int (p_n'l_{\eta_n}(z)-p'l_{\eta_0}(z))\zeta_{p,\eta_0}(x)\dot h_{\eta_0}(x)h_{\eta_0}(x)d\mu(x)\\
=\int \lim_{n\to\infty}(p_n'l_{\eta_n}(z)-p'l_{\eta_0}(z))\zeta_{p,\eta_0}(x)\dot h_{\eta_0}(x)h_{\eta_0}(x)d\mu(x)=0.\label{eq:ctsend}
\end{multline}
By \eqref{eq:cts0}-\eqref{eq:ctsend}, we conclude that  $\frac{\partial 	\upsilon(p_n,\Theta_0(P_\eta))}{\partial\eta}|_{\eta=\eta_n}-	\frac{\partial 	\upsilon(p,\Theta_0(P_\eta))}{\partial\eta}|_{\eta=\eta_0}\to 0$ as $\eta_n\to \eta_0.$ This establishes the claim of the lemma.
\end{proof}

\begin{theorem}\label{thm:pathdiff}
 Suppose Assumptions \ref{as:onY}-\ref{as:onP1}, and \ref{as:onP2} hold. Then, the mapping $\rho:\mathbf P\rightarrow \mathcal C(\mathbb S^\ell)$ pointwise defined by $\rho(h_\eta)(p) \equiv \upsilon(p,\Theta_0(P_\eta))$ for $h_\eta = \sqrt{dP_\eta/d\mu}$ is then  pathwise weak differentiable at $h_0 \equiv \sqrt{dP_0/d\mu}$. Moreover,  the derivative $\dot{\rho}:\dot{\mathbf P}\to\mathcal C(\mathbb S^\ell)$ satisfies:
	\begin{align}\dot{\rho}(\dot{h}_0)(p) &=   2\int\{ w(z)p'\nabla_zm_{p}(z)-\upsilon(p,\Theta_0(P_0))+p'l(z)\zeta_{p}(x)\}\dot h_0(x)h_0(x)d\mu(x).
	\end{align} 
\end{theorem}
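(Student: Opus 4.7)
The plan is to verify the pathwise weak differentiability condition along the family of parametric submodels $\eta \mapsto h_\eta$ constructed in \eqref{eq:tangent3}--\eqref{eq:heta}, whose tangent set $\dot{\mathbf U}$ is dense in $\dot{\mathbf S}$ by Theorem \ref{thm:tangent}. Lemmas \ref{lem:pathdiff_ptws}--\ref{lem:cts} already do the heavy analytic work of identifying the pointwise derivative $\partial \upsilon(p,\Theta_0(P_\eta))/\partial \eta$ and of controlling it uniformly. The remaining tasks are (i) to upgrade the pointwise convergence of the difference quotient to convergence after integration against an arbitrary finite Borel measure $B$ on $\mathbb S^\ell$, and (ii) to check that the resulting formula defines a continuous linear operator on $\dot{\mathbf S}$, so that the derivative extends uniquely from $\dot{\mathbf U}$ to the full tangent space.

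For step (i), I would apply the mean value theorem to write
\begin{equation*}
\frac{\upsilon(p,\Theta_0(P_\eta)) - \upsilon(p,\Theta_0(P_0))}{\eta} = \frac{\partial \upsilon(p,\Theta_0(P_{\tilde\eta}))}{\partial\tilde\eta}\bigg|_{\tilde\eta = \bar\eta(p,\eta)}
\end{equation*}
for some $\bar\eta(p,\eta)$ between $0$ and $\eta$. By Lemma \ref{lem:cts} (with $\bar\eta(p,\eta) \to 0$), the right-hand side converges pointwise in $p$ to $\dot\rho(\dot h_0)(p)$, where $\dot\rho(\dot h_0)(p)$ is given by Lemma \ref{lem:pathdiff_ptws} evaluated at $\eta_0 = 0$ (which matches the formula in the theorem). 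By Lemma \ref{lem:bdd}, the right-hand side is uniformly bounded on $\mathbb S^\ell \times N$. Since $\mathbb S^\ell$ is compact and $B(\mathbb S^\ell) < \infty$, the bounded convergence theorem delivers
\begin{equation*}
\lim_{\eta \to 0} \Big|\int_{\mathbb S^\ell} \Big\{ \tfrac{\upsilon(p,\Theta_0(P_\eta)) - \upsilon(p,\Theta_0(P_0))}{\eta} - \dot\rho(\dot h_0)(p)\Big\} dB(p)\Big| = 0,
\end{equation*}
which is the required condition for these parametric submodels. For step (ii), write $\psi_p$ for the kernel in the displayed formula for $\dot\rho$. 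Linearity of $\dot h \mapsto \dot\rho(\dot h)$ is immediate. For continuity as an operator $L^2_\mu \to \mathcal C(\mathbb S^\ell)$, Cauchy--Schwarz gives $\sup_p |\dot\rho(\dot h)(p)| \le 2\sup_p \|\psi_p h_0\|_{L^2_\mu} \|\dot h\|_{L^2_\mu}$, and the supremum is finite by the uniform boundedness of $\psi_p$ on $\mathcal X$ established in the proof of Lemma \ref{lem:bdd}. Continuity of $p \mapsto \dot\rho(\dot h)(p)$ on $\mathbb S^\ell$ follows from dominated convergence together with the $\mu$-almost-everywhere joint continuity of $(p,\eta) \mapsto \psi_p(x)$ shown in the proof of Lemma \ref{lem:cts}. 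Hence $\dot\rho$ is a continuous linear operator, and its values on the dense subset $\dot{\mathbf U}$ determine it uniquely on $\dot{\mathbf S}$.

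The principal technical difficulty -- which is already absorbed by the preceding lemmas -- is that $m_p$ (and therefore the support function) depends on $p$ through the discontinuous indicator $1\{p'l(z) > 0\}$, so neither pointwise continuity nor a clean uniform bound on the derivative in $p$ is obvious a priori. Assumption \ref{as:onmu} (ii), together with the regularity of $l_\eta$ coming from \eqref{eq:tangent5}, forces $\mu(\{p'l_\eta(z) = 0\}) = 0$ at each $(p,\eta)$, which is precisely what ensures the $a.e.$ continuity of the indicator in $(p,\eta)$. This is the crucial input that makes Lemmas \ref{lem:bdd} and \ref{lem:cts} available, and hence the bounded convergence argument above go through.
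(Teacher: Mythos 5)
Your proposal is correct and follows essentially the same route as the paper's own proof: the mean value theorem applied to the difference quotient, Lemma \ref{lem:bdd} to justify dominated (bounded) convergence under the finite measure $B$, Lemma \ref{lem:cts} to identify the limit, Cauchy--Schwarz for continuity of $\dot\rho$ as an operator into $\mathcal C(\mathbb S^\ell)$, and the density of $\dot{\mathbf U}$ in $\dot{\mathbf S}$ together with that continuity to extend the derivative to the full tangent space. Your closing remark on the role of Assumption \ref{as:onmu}~(ii) in securing $a.e.$ continuity of the indicator $1\{p'l_\eta(z)>0\}$ correctly identifies the technical crux that the auxiliary lemmas resolve.
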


\begin{proof}[\rm Proof of Theorem \ref{thm:pathdiff}]
We first show that $\partial 	\rho(P_\eta)/\partial\eta\big|_{\eta=0}$ is the pathwise weak derivative of $\rho$. For this, 
$\dot\rho(\dot h_0)\in\mathcal C(\mathbb S^\ell)$ for all $\dot h_0\in\dot{\mathbf S}$ as implied by Lemmas \ref{lem:pathdiff_ptws} and \ref{lem:cts}. Linearity of $\dot \rho$ is immediate, while continuity follows by noting that by the Cauchy-Schwarz inequality and $\|p\|=1$,
\begin{multline}
\sup_{\|\dot h_0\|_{L^2_\mu=1}}\|\dot\rho(\dot h_0)\|_\infty\\	
\le 2 \{\sup_{p\in\mathbb S^\ell}\|w(z)p'\nabla_zm_p(z)-\upsilon(p,\Theta_0(P))\|_{L^\infty_\mu}+\sup_{z\in\mathcal Z}\|l(z)\|\times\sup_{u\in\mathcal D'}|q(u)|\}\|\dot h_0\|_{L^2_\mu}\|h_0\|_{L^2_\mu}<\infty,
\end{multline} 
where we exploited \eqref{eq:bdd1.5}, Assumption \ref{as:onY} (ii), and the fact that $z\mapsto\|l(z)\|$ being continuous hence achieves a finite maximum on $\mathcal Z$ by  Assumptions \ref{as:onY} (i), \ref{as:onw}, and $P\in \mathbf P$.
	Let $\eta\mapsto h_\eta$ be a curve in $L^2_\mu$ defined in \eqref{eq:tangent3}-\eqref{eq:heta}. 
For each $p\in\mathbb S^\ell$, by the mean value theorem,
\begin{align}
\lim_{\eta_0 \rightarrow 0} \int_{\mathbb S^\ell} \frac{\upsilon(p,P_{\eta_0}) - \upsilon(p,P_0)}{\eta_0}dB(p) = \lim_{\eta_0 \rightarrow 0} \int_{\mathbb S^\ell} \frac{\partial \upsilon(p,P_\eta)(p)}{\partial\eta} \Big|_{\eta = \bar{\eta}(p,\eta_0)} dB(p) \label{eq:pathdiffrho0}\\ = \int                        _{\mathbb S^\ell}\frac{\partial \upsilon(p,P_\eta)(p)}{\partial\eta} \Big|_{\eta = 0}  dB(p) = \int_{\mathbb S^\ell} \dot{\rho}(\dot h_0)(\tau)dB(p) ~,	\label{eq:pathdiffrho1}
\end{align}
where the first equality holds at each $p$ for some $\bar \eta(p,\eta_0)$ a convex combination of $\eta_0$ and $0$. The second equality in turn follows by Lemma \ref{lem:bdd} justifying the use of the dominated convergence theorem, while the final equality follows by Lemma \ref{lem:cts} and the definition of $\dot \rho :\dot{\mathbf P}\rightarrow \mathcal C(\mathbb S^\ell)$. 
Eqs. \eqref{eq:pathdiffrho0}-\eqref{eq:pathdiffrho1} hold for any $\dot h_0$ in the tangent space $\dot{\mathbf U}$ of the curve defined in \eqref{eq:tangent3}-\eqref{eq:heta}. 
As discussed in the proof of Theorem \ref{thm:tangent}, $\dot{\mathbf U}$ is dense in $\dot{\mathbf S}.$ Since $\dot\rho$ is continuous, Eqs. \eqref{eq:pathdiffrho0}-\eqref{eq:pathdiffrho1} then hold for any $\dot h_{0}\in\dot{\mathbf P}$. This completes the proof.
\end{proof}

\begin{proof}[\rm Proof of Theorem \ref{thm:InvCov}]
Let $\mathbf B\equiv \mathcal C(\mathbb S^\ell)$ and let $\mathbf B^*$ be the set of finite Borel measures on $\mathbb S^\ell$, which is the norm dual of $\mathbf B$ by Corollary 14.15 in \cite{Aliprantis_Border2006a}.	By Theorem \ref{thm:pathdiff}, $\rho$ has pathwise weak derivative $\dot \rho$. For each $B\in\mathbf B^*$, define
\begin{align}
	\dot\rho^T(B)(x)\equiv \int_{\mathbb S^\ell }2\{ w(z)p'\nabla_zm_{p}(z)-\upsilon(p,\Theta_0(P))+p'l(z)\zeta_{p}(x)\}h_0(x)dB(p).\label{eq:dotrhoT}
\end{align}
	We show that (i) $\dot \rho^T$ is well defined for any $B\in \mathbf B^*$, (ii) $\dot\rho^T(B) \in \dot{\mathbf S}$ and finally (iii) $\dot\rho^T$ is the adjoint operator of $\dot\rho$. 
	
	We first note that $(p,z)\mapsto p'l(z)$ is continuous in $z$ for each $p$ by Assumption \ref{as:onw} and measurable in $p$ for each $z$. Thus, $(p,x)\mapsto p'l(z)$ is jointly measurable by  Lemma 4.51 in \cite{Aliprantis_Border2006a}. This implies the joint measurability of $(p,x)\mapsto 1\{p'l(z)>0\}$. A similar argument also ensures the joint measurability of $p'\nabla_zm_L(z)$ and $p'\nabla_zm_U(z)$. By the joint measurability of $(p,x)\mapsto (w(z),1\{p'l(z)>0\},p'\nabla_zm_L(z),p'\nabla_zm_U(z))$ and Assumption \ref{as:onw}, $(p,x)\mapsto w(z)p'\nabla_z m_p(z)$ is jointly measurable. By the proof of Theorem \ref{thm:thetabounds}, $\upsilon(p,\Theta_0(p))$ is differentiable in $p$ and is therefore continuous, implying $(p,x)\mapsto \upsilon(p,\Theta_0(p))$ is jointly measurable. 
	Further, $r_L$ and $r_U$ are measurable by $P\in\mathbf P$ satisfying Assumption \ref{as:onP2} (iii). $q(y_L-m_L(z)),q(y_U-m_U(z))$ are measurable by Assumption \ref{as:onY} and $P\in\mathbf P$ satisfying Assumption \ref{as:onP1} (iv). Hence, $(p,x)\mapsto \zeta_p(x)$ is jointly measurable.
	Therefore, the map $(p,x)\mapsto (w(z)p'\nabla_z m_p(z),\upsilon(p,\Theta_0(p),p'l(z),\zeta_p(x),h_0(x))'$ is jointly measurable by Lemma 4.49 in \cite{Aliprantis_Border2006a}. Hence, the map
	\begin{align}
(p,x)\mapsto 2\{w(z)p'\nabla_zm_{p}(z)-\upsilon(p,\Theta_0(P_0))+p'l(z)\zeta_{p}(x)\}
	\end{align}
is jointly measurable by the measurability of the composite map.
	
Moreover, for $|B|$ the total variation of the measure $B$, by \eqref{eq:bdd1.5}, we have
\begin{multline}
\int(	\int_{\mathbb S^\ell}2\{w(z)p'\nabla_zm_{p}(z)-\upsilon(p,\Theta_0(P_0))\}h_0(z)dB(p))^2d\mu(x)\\
\le 16\times \sup_{p\in\mathbb S^\ell}\|w(z)p'\nabla_{z}m_{p,\eta}(z)\|_{L^\infty_\mu}^2\times |B|^2<\infty.
\end{multline}
Further,
\begin{align}
\int(	\int_{\mathbb S^\ell}2p'l(z)\zeta_{p}(x)h_0(x))dB(p))^2d\mu(x)\le 16 \int |p'l(z)|^2h_0(x)^2d\mu(x)\times \bar\epsilon^{-2}\times\sup_{u\in D'} |q(u)|^2 \times |B|^2<\infty,
\end{align}
by Assumption \ref{as:onY} and $p\in\mathbf P$ satisfying Assumptions \ref{as:onP1} and \ref{as:onP2}.
Therefore, $\dot\rho^T(B)\in L^2_\mu$ for each $B\in\mathbf B^*$. 

By Fubini's theorem and  Assumption \ref{as:onY} (iv), we  have
\begin{multline}
	\int\int_{\mathbb S^\ell} 2\{ w(z)p'\nabla_zm_{p}(z)-\upsilon(p,\Theta_0(P_0))+p'l(z)\zeta_{p}(x)\}h_0(x)dB(p)h_0(x)d\mu(x)\\
	=	\int_{\mathbb S^\ell}\int 2\{ w(z)p'\nabla_zm_{p}(z)-\upsilon(p,\Theta_0(P_0))+p'l(z)\zeta_{p}(x)\}h^2_0(x)d\mu(x)dB(p)=0,
\end{multline}
where we exploited $\upsilon(p,\Theta_0(P_0))=E[w(Z)p'\nabla_zm_{p}(Z)]$ and $E[\zeta_{p}(x)|Z=z]=1\{p'l(z)\le 0\}E[q(Y_L-m_L(Z))|Z=z]+1\{p'l(z)>0\}E[q(Y_U-m_U(Z))|Z=z]=0$, $P-a.s$.
 Thus, by Theorem \ref{thm:tangent} and \eqref{eq:dotrhoT}, $\dot\rho^T(B)\in \dot{\mathbf S}$ for all $B\in\mathbf B^*$. Further, for any $\dot h_0\in\dot{\mathbf S}$, again by interchanging the order of integration
\begin{align}
	\int_{\mathbb S^\ell}\dot \rho(\dot h_0)(p)dB(p)=\int_{\mathcal X}\dot h_0(x)\dot\rho^T(B)(x)d\mu(x),
\end{align}
which ensures that $\dot\rho^T:\mathbf B^*\to\dot{\mathbf P}$ is the adjoint of $\dot\rho:\dot{\mathbf P}\to\mathbf B$.

Since $\dot{\mathbf S}$ is linear by Theorem \ref{thm:tangent}, Theorem \ref{thm:pathdiff} and Theorem 5.2.1 in \cite{BickelKlassenRitov1993} establishes that
\begin{multline}
\text{Cov}(\int_{\mathbb S^\ell}\mathbb G(p)dB_1(p),\int_{\mathbb S^\ell}\mathbb G(p)dB_2(p))=\frac{1}{4}\int_{\mathcal X}\dot\rho^T(B_1)(x)	\dot\rho^T(B_2)(x)	d\mu(x)\\
=\int_{\mathbb S^\ell}\int_{\mathbb S^\ell}E[\{ w(z)p'\nabla_zm_{p}(z)-\upsilon(p,\Theta_0(P_0))+p'l(z)\zeta_{p}(x)\}\{ w(z)q'\nabla_zm_{q}(z)-\upsilon(q,\Theta_0(P_0))+q'l(z)\zeta_{q}(x)\}]dB_1(p)dB_2(q),\label{eq:covker}
\end{multline}
for any $B_1,B_2\in\mathbf B^*$ by Fubini's theorem. Letting $B_1$ and $B_2$ be the degenerate  measures at $p$ and $q$ in \eqref{eq:covker} gives the desired result.
\end{proof}
}

\vspace{0.1in}
\begin{center}
{\sc {Appendix D}}: Proof of Theorem 4.1.
\end{center}
\renewcommand{\thedefinition}{D.\arabic{definition}}
\renewcommand{\theequation}{D.\arabic{equation}}
\renewcommand{\thelemma}{D.\arabic{lemma}}
\renewcommand{\thecorollary}{D.\arabic{corollary}}
\renewcommand{\thetheorem}{D.\arabic{theorem}}
\setcounter{lemma}{0}
\setcounter{theorem}{0}
\setcounter{corollary}{0}
\setcounter{equation}{0}
\setcounter{remark}{0}

{\footnotesize
In this appendix, we establish Theorem \ref{thm:estimator}.
Throughout, let $Y_{p,i}\equiv 1\{p' l(Z_i)\le 0\}Y_{L,i}+1\{p' l(Z_i)> 0\}Y_{U,i}$ and let $\bar \upsilon_n(p)\equiv \frac{1}{n}\sum_{i=1}^n p'\hat l_{i,h}(Z_{i})Y_{p,i}.$ The proof of Theorem \ref{thm:estimator} proceeds by decomposing $\sqrt n(\hat \upsilon_n(p)-\upsilon(p,\Theta_0(P_0)))$ as follows:
	\begin{multline} 
\sqrt n(\hat \upsilon_n(p)-\upsilon(p,\Theta_0(P_0)))=\sqrt n(\hat \upsilon_n(p)-\bar \upsilon_n(p))+\sqrt n(\bar \upsilon_n(p)-E[\bar \upsilon_n(p)])+\sqrt n(E[\bar \upsilon_n(p)]-\upsilon(p,\Theta_0(P_0)))\\
\equiv G_{1n}(p)+G_{2n}(p)+G_{3n}(p).\label{eq:effdecomp}
	\end{multline}
$G_{1n}$ is the difference between $\hat\upsilon_n$ and the infeasible estimator $\bar \upsilon_n$ which requires the knowledge of $Y_{p,i}$. $G_{2n}$ represents a properly centered version of $\bar\upsilon_n$, and $G_{3n}$ is the asymptotic bias of $\bar\upsilon_n$. The auxiliary lemmas are then used to show the following results:

\noindent
{\it Step 1:} Lemma \ref{lem:G1} shows $G_{1n}=o_p(1)$ uniformly in $p\in\mathbb S^\ell$ using the result of Lemma \ref{lem:euclidean}, while Lemma \ref{lem:G3} shows $G_{3n}=o(1)$ uniformly in $p\in\mathbb S^\ell$.

\noindent
{\it Step 2:} Lemmas \ref{lem:pss3.1} and \ref{lem:G2} then establish that $G_{2n}=\frac{1}{\sqrt n}\sum_{i=1}^n\psi_p(X_i)+o_p(1)$  uniformly in $p\in\mathbb S^\ell$, and Lemma \ref{lem:donsker} establishes that $\{\psi_p:p\in\mathbb S^\ell\}$ is a $P$-Donsker class.

\noindent
{\it Step 3:} Combining Steps 1-2 and \eqref{eq:effdecomp} gives the main claim of Theorem \ref{thm:estimator}.

Before proceeding further, we introduce one more piece of notation.
For each $p\in\mathbb S^\ell$, define
\begin{align}
p_n(x_i,x_j;p)\equiv -\Big(\frac{1}{h}\Big)^{\ell+1}p'\nabla_z K\Big(\frac{z_i-z_j}{h}\Big)(y_{p,i}-y_{p,j}),~\text{ and }~
r_n(x_i;p)\equiv E[p_n(X_i,X_j;p)|X_i=x_i].\label{eq:pnrn}
\end{align}

\begin{lemma}\label{lem:euclidean}
Let $\mathcal H_{n}\equiv\{\tilde p_n:\mathcal X\times\mathcal X\to\mathbb R:\tilde p_n(x,x';p)=-p'\nabla_z K(\frac{z-z'}{h})(y_{p}-y_{p}'),p\in\mathbb B^\ell\}$ and $\mathcal G_n\equiv\{\tilde q_n:\mathcal X\times\mathcal X\to\mathbb R:\tilde q_n(x,x';p)=\tilde p_n(x,x';p)-\tilde r_n(x,p)-\tilde r_n(x';p)-E[\tilde r_n(x;p)],p\in\mathbb B^\ell\}$ where $\tilde r_n(x_i;p)\equiv E[\tilde p_n(X_i,X_j;p)|X_i=x_i]$ and $\mathbb B^\ell\equiv\{p\in\mathbb R^\ell:\|p\|\le 1\}$ is the unit ball in $\mathbb R^\ell$.  Suppose Assumption \ref{as:onK} holds. Then, $\mathcal H_n$ and $\mathcal G_n$ are Euclidean in the sense of \cite{Sherman:1994uq} with envelope functions $H:\mathcal X\times\mathcal X\to \mathbb R$ and $G:\mathcal X\times\mathcal X\to \mathbb R$ such that $E[H(X_i,X_j)^2]<\infty$ and $E[G(X_i,X_j)^2]<\infty$.	
\end{lemma}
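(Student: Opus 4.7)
The plan is to express $\mathcal{H}_n$ as a finite sum of products of elementary Euclidean classes and invoke standard stability results, then obtain $\mathcal{G}_n$ from $\mathcal{H}_n$ via closure under conditional expectation and finite sums. A guiding principle throughout is to choose envelopes that do not depend on the bandwidth $h$, so that the resulting Euclidean constants are uniform in $n$.

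First I would isolate two elementary classes whose Euclidean constants are bandwidth-free. The collection $\mathcal{L}_n \equiv \{(x,x') \mapsto p'\nabla_z K((z-z')/h): p \in \mathbb{B}^\ell\}$ lies in the $\ell$-dimensional vector space spanned by the coordinate functions of $\nabla_z K((z-z')/h)$, and is therefore Euclidean with constants depending only on $\ell$ and with envelope $F(x,x') \equiv \sup_u \|\nabla_z K(u)\|$, which is finite and $h$-free by Assumption \ref{as:onK} (ii) (Lemma 2.4 of Pakes and Pollard, 1989). Second, $\mathcal{I} \equiv \{x \mapsto 1\{p'l(z) > 0\}: p \in \mathbb{B}^\ell\}$ consists of indicators of half-spaces in $\mathbb{R}^\ell$ pulled back through $l$, and so is a VC-subgraph class of index at most $\ell+1$, hence Euclidean with envelope $1$; the analogous class indexed by $z'$ is Euclidean by symmetry.

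Next I would decompose $\tilde p_n$ using
\begin{align}
y_p - y_p' = 1\{p'l(z) \le 0\}y_L + 1\{p'l(z) > 0\}y_U - 1\{p'l(z') \le 0\}y_L' - 1\{p'l(z') > 0\}y_U', \nonumber
\end{align}
so that each of the four resulting summands in $\tilde p_n$ is the pointwise product of an element of $\mathcal{L}_n$, an indicator from $\mathcal{I}$ (or its $z'$-analog), and a bounded measurable factor ($y_L$, $y_U$, $y_L'$, or $y_U'$). Since the Euclidean property is preserved under pointwise products and finite sums, with envelope given by the product (respectively sum) of envelopes (Lemma 2.14 of Pakes and Pollard, 1989, and Lemma 5 of Sherman, 1994), the class $\mathcal{H}_n$ is Euclidean with envelope $H(x,x') \equiv C(|y_L| + |y_U| + |y_L'| + |y_U'|)$ for $C \equiv \sup_u \|\nabla_z K(u)\|$. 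Since $Y_L$ and $Y_U$ have compact supports by Assumption \ref{as:onP1} (i), $E[H(X_i,X_j)^2] < \infty$.

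For $\mathcal{G}_n$, I would observe that $\mathcal{R} \equiv \{x \mapsto \tilde r_n(x;p): p \in \mathbb{B}^\ell\}$ is the image of $\mathcal{H}_n$ under the projection $f(\cdot,\cdot) \mapsto E[f(\cdot, X_j) \mid X_i = \cdot]$, which preserves the Euclidean property with envelope $\bar H(x) \equiv E[H(x,X_j) \mid X_i = x]$ (Nolan and Pollard, 1987, Lemma 22). Since $\tilde q_n(x,x';p) = \tilde p_n(x,x';p) - \tilde r_n(x;p) - \tilde r_n(x';p) - E[\tilde r_n(X;p)]$ is a finite sum of elements of Euclidean classes, sum-stability yields that $\mathcal{G}_n$ is Euclidean with envelope $G(x,x') \equiv H(x,x') + \bar H(x) + \bar H(x') + E[H(X_i,X_j)]$, and $E[G(X_i,X_j)^2] < \infty$ by Jensen's inequality and the $L^2$ bound on $H$. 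The main obstacle I anticipate is ensuring that the Euclidean constants do not blow up through the $n$-dependence in $h$; this is handled by using the $h$-free uniform bound $\sup_u\|\nabla_z K(u)\|$ as the envelope of $\mathcal{L}_n$, which is available precisely because of the boundedness conditions in Assumption \ref{as:onK}. A secondary concern is the correct invocation of preservation of Euclidean covering bounds under conditional expectation, for which one appeals to standard empirical process results.
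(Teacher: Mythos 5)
Your proposal is correct and follows essentially the same route as the paper: decompose $\tilde p_n$ into a finite sum of products of linear-in-$p$ kernel classes and half-space indicator classes, invoke sum/product stability of the Euclidean property, and bound the envelope via the uniform bound on $\nabla_z K$ and the compact support of $(Y_L,Y_U)$. The only difference is cosmetic (you justify the linear-in-$p$ classes via the finite-dimensional vector space argument rather than the Lipschitz-in-parameter lemma, and you spell out the conditional-expectation step for $\mathcal G_n$, which the paper omits as "similar").
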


\begin{proof}[\rm Proof of Lemma \ref{lem:euclidean}]
Let $\mathcal F_g\equiv\{f:\mathcal X\times\mathcal X\to\mathbb R:f(x,x,p)=p'g(x,x'),p\in\mathbb B^\ell\}$, where $g:\mathcal X\times\mathcal X\to\mathbb R^\ell$ is a known function.  By the Cauchy-Schwarz inequality, for any $p,q\in\mathbb B^\ell$, we therefore have $|f(x,x',p)-f(x,x',p')|\le \|g(x,x')\|\|p-q\|.$ Hence, by Lemma 2.13 in \cite{Pakes:1989fk}, $\mathcal F_g$ is Euclidean with the envelope function $F_g(x,x')=g(x,x')'p_0+M\|g(x,x')\|$ for some $p_0\in\mathbb B^\ell$, where $M=2\sqrt \ell\sup_{p\in\mathbb B^\ell}\|p-p_0\|$, which can be further bounded from above by $4\sqrt \ell$. Hence, we may take the envelope function as $F_g(x,x')=(1+4\sqrt \ell)\|g(x,x')\|.$

By Lemma 2.4 in \cite{Pakes:1989fk}, the class of sets $\{x\in\mathcal X:p'l(z)>0\}$ is a VC-class, which in turn implies that the function classes $\mathcal F_{\phi_1}\equiv\{f:\mathcal X\times\mathcal X\to\mathbb R:f(x,x',p)=1\{p'l(z)>0\}\}$ and $\mathcal F_{\phi_2}\equiv\{f:\mathcal X\times\mathcal X\to\mathbb R:f(x,x',p)=1\{p'l(z')>0\}\}$ are Euclidean, where $x=(y_L,y_U,z)$ and $x'=(y_L',y_U',z')$ with envelope function $F_{\phi_j}(x,x')=1,j=1,2$.

Note that we may write
$$	\tilde p_n(x,x';p)=-p'\nabla_z K\left(\frac{z-z'}{h}\right)\{(y_{U}-y_{L})1\{p'l(z)>0\}-y_{L}-(y_{U}'-y_{L}')1\{p'l(z')>0\}+y_{L}'\}.$$
Hence, $\mathcal H_n$ can be written as the combination of classes of functions as $\mathcal H_n=\mathcal F_{g_1}\cdot\mathcal F_{\phi_1}+\mathcal F_{g_2}+\mathcal F_{g_3}\cdot\mathcal F_{\phi_2}+\mathcal F_{g_4},$ where
\begin{align*}
g_1(x,x')&=	-\nabla_z K\Big(\frac{z-z'}{h}\Big)(y_{U}-y_{L}),~~g_2(x,x')=	\nabla_z K\Big(\frac{z-z'}{h}\Big)y_{L}\\
g_3(x,x')&=\nabla_z K\Big(\frac{z-z'}{h}\Big)(y_{U}'-y_{L}'),~~g_4(x,x')=	-\nabla_z K\Big(\frac{z-z'}{h}\Big)y_{L}'.
\end{align*}
By Lemma 2.14 in \cite{Pakes:1989fk} and $\mathcal F_{\phi_1}$ and $\mathcal F_{\phi_2}$ having constant envelope functions, $\mathcal H_n$ is Euclidean with the envelope function $F_{g_1}+ F_{g_2}+F_{g_3}+F_{g_4}$. Hence, $\mathcal H_n$ is Euclidean with the envelope function $H(x,x')\equiv 8(1+4\sqrt \ell)\sup_{y\in D}|y|\sup_{h>0}\|\nabla_zK(\frac{z-z'}{h})\|$. By Assumption \ref{as:onK},  $E[\sup_{h>0}\|\nabla_z K(\frac{Z_i-Z_j}{h})\|^2]<\infty$, which in turn implies $E[H(X_i,X_j)^2]<\infty$. This shows the claim of the lemma for $\mathcal H_n$.
Showing $\mathcal G_n$ is Euclidean is similar. Hence, the rest of the proof is omitted. 
\end{proof}

\begin{lemma}\label{lem:G1}
Suppose Assumptions \ref{as:onP1}, \ref{as:onmu}, and \ref{as:onK} hold.  Suppose further that $\tilde h\to 0$ and $ n\tilde h^{4(\ell+1)}\to\infty$. Then, uniformly in $p\in \mathbb S^\ell$, $\hat \upsilon_n(p)-\bar \upsilon_n(p)=o_p(n^{-1/2}).$
\end{lemma}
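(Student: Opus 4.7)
My first step is to decompose the difference explicitly. Since $\hat Y_{p,i}-Y_{p,i}=(Y_{U,i}-Y_{L,i})\bigl[1\{p'\hat l_{i,\tilde h}(Z_i)>0\}-1\{p'l(Z_i)>0\}\bigr]$, I would write
\begin{equation*}
\hat\upsilon_n(p)-\bar\upsilon_n(p)=\frac{1}{n}\sum_{i=1}^n p'\hat l_{i,h}(Z_i)(Y_{U,i}-Y_{L,i})\bigl[1\{p'\hat l_{i,\tilde h}(Z_i)>0\}-1\{p'l(Z_i)>0\}\bigr].
\end{equation*}
The indicator difference is nonzero only on the ``sign-swap'' event, which forces $|p'l(Z_i)|\le\|\hat l_{i,\tilde h}(Z_i)-l(Z_i)\|$ (using $\|p\|=1$), and on that event $|p'\hat l_{i,h}(Z_i)|\le|p'l(Z_i)|+\|\hat l_{i,h}(Z_i)-l(Z_i)\|$. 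Setting $\tilde\delta_n\equiv\sup_{z\in\mathcal Z}\|\hat l_{\tilde h}(z)-l(z)\|$, $\delta_n\equiv\sup_{z\in\mathcal Z}\|\hat l_{h}(z)-l(z)\|$, and invoking Assumption \ref{as:onP1} (i) to bound $|Y_{U,i}-Y_{L,i}|$ uniformly by a constant $C$, this yields the deterministic inequality
\begin{equation*}
\sup_{p\in\mathbb S^\ell}|\hat\upsilon_n(p)-\bar\upsilon_n(p)|\le C(\tilde\delta_n+\delta_n)\cdot\sup_{p\in\mathbb S^\ell}\frac{1}{n}\sum_{i=1}^n 1\{|p'l(Z_i)|\le\tilde\delta_n\}.
\end{equation*}

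Next I would invoke standard uniform convergence results for kernel estimators of density derivatives (see e.g.\ \cite{Newey1991NSMES}) to conclude $\tilde\delta_n=O_p(\sqrt{\log n/(n\tilde h^{\ell+2})}+\tilde h^{J})$ and analogously for $\delta_n$. Under the bandwidth assumption $n\tilde h^{4(\ell+1)}\to\infty$, one has $\tilde h^{\ell+1}\gg n^{-1/4}$; since $\tilde h\to 0$ and $\ell\ge 1$, this gives $\tilde h^{\ell+2}\ge\tilde h^{2(\ell+1)}\gg n^{-1/2}$, so $\tilde\delta_n=o_p(n^{-1/4})$, hence $\tilde\delta_n^2=o_p(n^{-1/2})$. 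For the second factor, I would note that the collection of slabs $\{z\mapsto 1\{|p'l(z)|\le t\}:p\in\mathbb S^\ell,\,t\ge 0\}$ is a VC class (being the symmetric difference of two half-spaces applied to the measurable map $z\mapsto l(z)$), so by a standard maximal inequality
\begin{equation*}
\sup_{p\in\mathbb S^\ell,\,t\ge 0}\Bigl|\frac{1}{n}\sum_{i=1}^n 1\{|p'l(Z_i)|\le t\}-P(|p'l(Z)|\le t)\Bigr|=O_p(n^{-1/2}).
\end{equation*}
Combined with the bound $\sup_{p\in\mathbb S^\ell}P(|p'l(Z)|\le\tilde\delta_n)=O_p(\tilde\delta_n)$, which follows from Assumption \ref{as:onY} (i) (absolute continuity of $l(Z)$ with bounded density on compact $\mathcal Z$), the second factor is $O_p(\tilde\delta_n+n^{-1/2})$ uniformly in $p$.

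Assembling everything gives
\begin{equation*}
\sup_{p\in\mathbb S^\ell}|\hat\upsilon_n(p)-\bar\upsilon_n(p)|=O_p\bigl((\tilde\delta_n+\delta_n)(\tilde\delta_n+n^{-1/2})\bigr)=O_p(\tilde\delta_n^2+\tilde\delta_n n^{-1/2}+\delta_n n^{-1/2})=o_p(n^{-1/2}),
\end{equation*}
as required. The main obstacle is the final rate calculation: one must verify that the bandwidth condition $n\tilde h^{4(\ell+1)}\to\infty$ is indeed sharp enough to guarantee $\tilde\delta_n^2=o(n^{-1/2})$, and one must establish the uniform empirical-process bound on the slabs simultaneously in $p$ and $\tilde\delta_n$ (rather than just pointwise). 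The Euclidean property established in Lemma \ref{lem:euclidean} and a VC maximal inequality handle the uniformity, while the kernel bias being controlled by $\tilde h^{J}$ under Assumption \ref{as:onK} handles the rate.
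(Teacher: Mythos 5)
Your decomposition and the reduction to the sign-swap event coincide with the paper's starting point, but from there you take a genuinely different route. The paper reduces the claim to showing $E[\sup_{p}|u_{i,n}(p)|^2]=o(n^{-1})$, pulls out $E[\|\hat l_{i,h}(Z_i)\|^4]^{1/2}$ by Cauchy--Schwarz, and then shows that $P(\text{sgn}(p'\hat l_{i,\tilde h}(Z_i))\ne \text{sgn}(p'l(Z_i)) \text{ for some } p)$ decays exponentially, via a Talagrand-type maximal inequality (Theorem 2.14.16 of van der Vaart and Wellner) applied to the VC class of centered projections of $\nabla_z K(\cdot/\tilde h)$, with the bias of $\hat l_{i,\tilde h}$ handled by mere consistency; the condition $n\tilde h^{4(\ell+1)}\to\infty$ enters only to control the denominator of the exponent. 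You instead bound the magnitude of each swapped summand by the sup-norm errors $\tilde\delta_n,\delta_n$ and count the swapped observations through the slab $\{|p'l(Z_i)|\le\tilde\delta_n\}$. That idea is attractive --- it is more explicit than the paper about the fact that swaps occur only near the switching boundary $\{p'l(z)=0\}$ --- but as written it does not close under the lemma's hypotheses.

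The gap is precisely at the step you flagged. Your bound needs $\tilde\delta_n^2=o_p(n^{-1/2})$, i.e.\ $\tilde\delta_n=o_p(n^{-1/4})$, and $\tilde\delta_n$ contains a bias component. The condition $n\tilde h^{4(\ell+1)}\to\infty$ is a \emph{lower} bound on $\tilde h$: it controls the variance of $\hat l_{i,\tilde h}$ but places no upper bound on how slowly $\tilde h\to 0$. Since the only other condition is $\tilde h\to 0$, the bias $\sup_z\|E[\hat l_{\tilde h}(z)]-l(z)\|$ is guaranteed to be $o(1)$ but not $o(n^{-1/4})$ (take $\tilde h = 1/\log n$); moreover the $O(\tilde h^{J})$ bias expansion you invoke requires the smoothness in Assumption \ref{as:onf_est}, which is not among this lemma's hypotheses. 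With bias only $o(1)$ your final bound degrades to $o_p(1)$ rather than $o_p(n^{-1/2})$. A second, smaller issue: the anti-concentration step $\sup_{p}P(|p'l(Z)|\le t)=O(t)$ requires the densities of the projections $p'l(Z)$ to be bounded near zero uniformly in $p$; Assumption \ref{as:onY} (i) gives only absolute continuity (and is not a hypothesis of the lemma --- the paper uses only $P(p'l(Z)=0)=0$ via Assumption \ref{as:onmu} (ii)). Your argument would become a valid alternative if you added a bias-rate condition such as $n^{1/4}\sup_z\|E[\hat l_{\tilde h}(z)]-l(z)\|\to 0$ together with a uniform bound on the densities of $p'l(Z)$, but both are genuine strengthenings of what the lemma assumes.
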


\begin{proof}[\rm Proof of Lemma \ref{lem:G1}]
By \eqref{eq:shat}, we may write 
\begin{multline}
	\hat \upsilon_n(p)-\bar \upsilon_n(p)=\frac{1}{n}\sum_{i=1}^np'\hat l_{i,h}(Z_i)(\hat Y_{p,i}-Y_{p,i})\\
	=\frac{1}{n}\sum_{i=1}^np'\hat l_{i,h}(Z_i)[(1\{p'\hat l_{i,\tilde h}(Z_i)>0\}-1\{p' l(Z_i)>0\})(Y_{U,i}-Y_{L,i})]=\frac{1}{n} \sum_{i=1}^nu_{i,n}(p),
\end{multline}
where $u_{i,n}(p)=p'\hat l_{i,h}(Z_i)[(1\{p'\hat l_{i,\tilde h}(Z_i)>0\}-1\{p' l(Z_i)>0\})(Y_{U,i}-Y_{L,i})].$ For the conclusion of the lemma, it therefore suffices to show $E[\sup_{p\in\mathbb S^\ell}|u_{i,n}(p)|^2]=o(n^{-1})$.

By $\|p\|=1$, the Cauchy-Schwarz inequality, and Assumption \ref{as:onP1}, 
\begin{multline}
	E[\sup_{p\in\mathbb S^\ell}|u_{i,n}(p)|^2]\le 2\sup_{y\in D}|y|E[\|\hat l_{i,h}(Z_i)\|^2\sup_{p\in\mathbb S^\ell}|1\{p'\hat l_{i,\tilde h}(Z_i)>0\}-1\{p' l(Z_i)>0|^2]\\
	\le 2\sup_{y\in D}|y|E[\|\hat l_{i,h}(Z_i)\|^4]^{1/2}P(\text{\text{sgn}}(p'\hat l_{i,\tilde h}(Z_i))\ne \text{sgn}(p' l(Z_i)),\exists p\in\mathbb S^\ell),\label{eq:G1_1}
\end{multline}
where $E[\|\hat l_{i, h}(Z_i)\|^4]<\infty$ under our choice of $h$. Hence, for the desired result, it suffices to show that $P(\text{sgn}(p'\hat l_{i,\tilde h}(Z_i))\ne \text{sgn}(p' l(Z_i)),\exists p\in\mathbb S^\ell)=o(n^{-1})$. By Assumption \ref{as:onmu}, it follows that
\begin{multline}
	P(\text{sgn}(p'\hat l_{i,\tilde h}(Z_i))	\ne \text{sgn}(p' l(Z_i)),\exists p\in\mathbb S^\ell)
	\\\le P(p'\hat l_{i,\tilde h}(Z_i))>0\text{ and }p' l(Z_i)<0,\exists p\in\mathbb S^\ell)+P(p'\hat l_{i,\tilde h}(Z_i))<0\text{ and }p' l(Z_i)>0,\exists p\in\mathbb S^\ell).\label{eq:G1_2}
\end{multline}
Without loss of generality, suppose that $p'\hat l_{i,\tilde h}(Z_i))>0\text{ and }p' l(Z_i)<0$ for some $p\in\mathbb S^\ell$. Then, there must exist $\epsilon>0$ such that
$\sup_{p\in\mathbb S^\ell}|p'\hat l_{i,\tilde h}(Z_i)-E[p'\hat l_{i,\tilde h}(Z_i)]+E[p'\hat l_{i,\tilde h}(Z_i))]+p' l(Z_i)|>\epsilon$. This is also true if $p'\hat l_{i,\tilde h}(Z_i))<0\text{ and }p' l(Z_i)>0$.
Therefore, by the triangle inequality and the law of iterated expectations, we may write
\begin{multline}
P(\text{sgn}(p'\hat l_{i,\tilde h}(Z_i))	\ne \text{sgn}(p' l(Z_i)),\exists p\in\mathbb S^\ell)\\
\le 2\Big\{E\Big[P\big(\sup_{p\in\mathbb S^\ell}|p'\hat l_{i,\tilde h}(Z_i)-E[p'\hat l_{i,\tilde h}(Z_i)|Z_{i}])|>\epsilon/2|Z_i\big)	+P\big(\sup_{p\in\mathbb S^\ell}|E[p'\hat l_{i,\tilde h}(Z_i)|Z_{i}]-p' l(Z_i)|>\epsilon/2|Z_i\big)\Big]\Big\},\label{eq:G1_3}	
\end{multline}
where the second term in \eqref{eq:G1_3} vanishes for all $n$ sufficiently large because   the bias satisfies $\|E[\hat l_{i,\tilde h}(Z_i)|Z_{i}]- l(Z_i)\|\to 0$ with probability 1 as $\tilde h\to0$. Hence, we focus on controlling the first term in \eqref{eq:G1_3} below.

Let $\bar M\equiv \sup_{z\in\mathcal Z}\|\nabla_z K(z)\|$ and define 
\begin{align}
W_n(p)&\equiv \frac{\tilde h^{(\ell+1)}}{2\bar M}(	p'\hat l_{i,\tilde h}(z)-E[p'\hat l_{i,\tilde h}(Z_i)|Z_{i}=z])
=\frac{1}{2\bar M(n-1)}\sum_{j=1,j\ne i}^np'	\Big\{\nabla_z K\Big(\frac{z-Z_j}{\tilde h}\Big)-E[\nabla_z K\Big(\frac{z-Z_j}{\tilde h}\Big)]\Big\}\\
\bar\sigma^2&\equiv E\Big[\sup_{p\in\mathbb S^\ell}(\frac{1}{2\bar M(n-1)}\sum_{j=1,j\ne i}^np'	\Big\{\nabla_z K\Big(\frac{z-Z_j}{\tilde h}\Big)-E[\nabla_z K\Big(\frac{z-Z_j}{\tilde h}\Big)]\Big\})^2\Big].\label{eq:barsigma}
\end{align}
Define $\mathcal W\equiv\{f:\mathcal X\to\mathbb R:f(z_j)=\frac{1}{2\bar M}p'	\{\nabla_z K(\frac{z-z_j}{\tilde h})-E[\nabla_z K(\frac{z-Z_j}{\tilde h})]\},p\in\mathbb S^\ell\}.$ Then by $\mathbb S^\ell$ being finite dimensional and Lemma 2.6.15 in \cite{Vaart_Wellner2000aBK}, $\mathcal W$ is a VC-subgraph class, which in turn implies that $\sup_{Q}N(\epsilon,\mathcal W,L_2(Q))\le (\frac{K}{\epsilon})^{V}$ for all $0<\epsilon<K$ for some positive constants $V$ and $K$ by  Lemman 2.6.7 in \cite{Vaart_Wellner2000aBK}.
Then, by $W_n$ being independent of $Z_i$ and  Theorem 2.14.16 in \cite{Vaart_Wellner2000aBK}, we have
\begin{multline}
P(\sup_{p\in\mathbb S^\ell}|p'\hat l_{i,\tilde h}(Z_i)-E[p'\hat l_{i,\tilde h}(Z_i)|Z_{i}])|>\epsilon/2|Z_i=z)= P(\|W_n\|_{\mathcal W}>\frac{\epsilon \tilde h^{(\ell+1)}}{4\bar M})	\\
\le 
C\Big(\frac{1}{\bar\sigma}\Big)^{2V}\Big(1\vee \frac{\epsilon \tilde h^{(\ell+1)}}{4\bar M\bar\sigma}\Big)^{3V+1}\exp\Big(-\frac{1}{2}\frac{(\frac{\epsilon \tilde h^{(\ell+1)}}{4\bar M})^2}{\bar\sigma^2+(3+\frac{\epsilon \tilde h^{(\ell+1)}}{4\bar M})/\sqrt n}\Big),\label{eq:G1_4}
\end{multline}
where  $C$ is a constant that depends on $V$ and $K$. Note that under the imposed conditions on $\tilde h$, we have
\begin{align}
\frac{(\frac{\epsilon \tilde h^{(\ell+1)}}{4\bar M})^2}{\bar\sigma^2+(3+\frac{\epsilon \tilde h^{(\ell+1)}}{4\bar M})/\sqrt n}=\frac{1}{S_{1,n}+S_{2,n}},	
\end{align}
where
$S_{1,n}\equiv \bar\sigma^2/(\frac{\epsilon \tilde h^{(\ell+1)}}{4\bar M})^2$ and $S_{2,n}\equiv (3+\frac{\epsilon \tilde h^{(\ell+1)}}{4\bar M})/[(\frac{\epsilon \tilde h^{(\ell+1)}}{4\bar M})^2\sqrt n]$.
By \eqref{eq:barsigma}, $\bar\sigma^2=o(n^{-2}),$ which implies that $S_{1,n}=o(1/(n\tilde h^{(\ell+1)})^2)=o(1)$ under our choice of $\tilde h$.
Further, under our assumption, $\sqrt n\tilde h^{2(\ell+1)}\to\infty,$ which in turn implies $S_{2,n}=o(1)$.
This ensures that, by \eqref{eq:G1_4}, $P(\sup_{p\in\mathbb S^\ell}|p'\hat l_{i,\tilde h}(Z_i)-E[p'\hat l_{i,\tilde h}(Z_i)|Z_{i}])|>\epsilon/2|Z_i=z)$ decays exponentially  as $n\to\infty$. Hence, by \eqref{eq:G1_1}, \eqref{eq:G1_2}, and \eqref{eq:G1_3}, we have $E[\sup_{p\in\mathbb S^\ell}|u_{i,n}(p)|^2]=o(n^{-1})$ as desired. This establishes the claim of the Lemma.
\end{proof}

\begin{lemma}\label{lem:pss3.1}
Let	$U_n(p)\equiv {n \choose 2}^{-1}\sum_{i=1}^{n-1}\sum_{j=i+1}^np_n(X_i,X_j;p)$ and $\hat U_n(p)=\frac{2}{n}\sum_{i=1}^n r_n(X_i;p).$ 
Suppose Assumptions \ref{as:onY} and \ref{as:onK}  hold.
Suppose further that $nh^{\ell+2+\delta}\to\infty$ for some $\delta>0$ as $h\to0$.
 Then, $\sqrt n(\hat U_n(p)-U_n(p))=o_p(1)$ uniformly in $p\in\mathbb S^\ell.$ 
\end{lemma}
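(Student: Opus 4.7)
The plan is to use the Hoeffding decomposition of the symmetric $U$-statistic $U_n(p)$ and control the resulting completely degenerate second-order remainder uniformly in $p \in \mathbb S^\ell$. Symmetry of $p_n(\cdot,\cdot;p)$ follows from $K(u) = K(-u)$ in Assumption \ref{as:onK}(iv), which implies $\nabla K(-u) = -\nabla K(u)$: swapping arguments in $p_n(x,x';p) = -h^{-(\ell+1)}p'\nabla K((z-z')/h)(y_p - y_p')$ flips two signs and leaves the value unchanged. Letting $\theta_n(p) \equiv E[p_n(X_i,X_j;p)]$, Hoeffding's decomposition gives
\begin{align}
U_n(p) = \theta_n(p) + \tfrac{2}{n}\sum_{i=1}^n\bigl[r_n(X_i;p) - \theta_n(p)\bigr] + W_n(p),
\end{align}
where $W_n(p) \equiv \binom{n}{2}^{-1}\sum_{i<j}q_n(X_i,X_j;p)$ is the completely degenerate $U$-statistic with symmetric kernel $q_n(x,x';p) = p_n(x,x';p) - r_n(x;p) - r_n(x';p) + \theta_n(p)$, satisfying $E[q_n(X_i,X_j;p)\mid X_i]=0$ almost surely. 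Rearranging, $\hat U_n(p) - U_n(p) = \theta_n(p) - W_n(p)$; since the deterministic piece $\theta_n(p)$ is absorbed in the bias term handled by Lemma \ref{lem:G3}, the substantive task reduces to showing $\sqrt n\sup_{p\in\mathbb S^\ell}|W_n(p)| = o_p(1)$.

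For this uniform bound on the degenerate $U$-process $\{W_n(\cdot)\}$, I would invoke the Euclidean property of $\mathcal G_n$ established in Lemma \ref{lem:euclidean} together with a maximal inequality for degenerate $U$-processes indexed by a Euclidean class (e.g.\ the main corollary of \citealp{Sherman:1994uq}). This yields
\begin{align}
E\Bigl[\sup_{p\in\mathbb S^\ell}|W_n(p)|^2\Bigr] \le \frac{C}{n^2}\,E[G(X_i,X_j)^2],
\end{align}
where $G$ is the envelope of $\mathcal G_n$. A change of variables $u=(z-z')/h$ applied to the dominant $h^{-(\ell+1)}\|\nabla K((z-z')/h)\|$ component of $G$, combined with the boundedness of $f$ and $\nabla K$, gives $E[G(X_i,X_j)^2] = O(h^{-(\ell+2)})$. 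Hence $\sqrt n\sup_{p\in\mathbb S^\ell}|W_n(p)| = O_p\bigl((nh^{\ell+2})^{-1/2}\bigr)$, which is $o_p(1)$ under the rate condition $nh^{\ell+2+\delta}\to\infty$.

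The principal obstacle is upgrading the routine pointwise variance calculation---$E[|W_n(p)|^2] = O(n^{-2}h^{-(\ell+2)})$ for each fixed $p$---to the supremum over $\mathbb S^\ell$; this step is what requires the Euclidean property of $\mathcal G_n$ and a bona fide degenerate $U$-process maximal inequality, because the envelope $G$ itself diverges with $n$ through $h$. The $\delta>0$ slack in $nh^{\ell+2+\delta}\to\infty$ is precisely the buffer needed to absorb the polynomial entropy factors that appear in such inequalities and to convert the pointwise rate into a uniform one.
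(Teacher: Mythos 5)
Your proposal matches the paper's proof essentially step for step: both reduce the claim to the completely degenerate part of the Hoeffding decomposition, invoke the Euclidean property of $\mathcal G_n$ from Lemma \ref{lem:euclidean} together with \citeposs{Sherman:1994fk} maximal inequality for degenerate $U$-processes, bound the (rescaled) kernel's second moment via the change of variables $u=(z-z')/h$ so that $E[\sup_p|q_n|^2]=O(h^{-(\ell+2)})$, and use $nh^{\ell+2+\delta}\to\infty$ to absorb the polynomial entropy loss in the uniform rate. Your explicit treatment of the deterministic centering term $\theta_n(p)$ is, if anything, more careful than the paper's, which buries it in the definition of $q_n$.
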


\begin{proof}[\rm Proof of Lemma \ref{lem:pss3.1}]
 Following the same argument as in the proof of Lemma 3.1 in \cite{PowellStockStoker1989EJES}, we may write
\begin{align}
\hat U_n(p)-U_n(p)={n\choose 2}^{-1}\sum_{i=1}^{n-1}\sum_{j=i+1}^nq_n(X_i,X_j;p),	\label{eq:uhatun}
\end{align}
 where $q_n(x_i,x_j;p)=p_n(x_i,x_j;p)-r_n(x_i,p)-r_n(x_j;p)-E[r_n(X_i;p)]$. Recall that $\tilde q_n=h^{(\ell+1)}q_n.$
By \eqref{eq:pnrn}, we may then obtain the following bound:
\begin{multline}
	E[\sup_{p\in\mathbb S^\ell}|\tilde q_n(X_i,X_j;p)|^2]\le 16 E[\sup_{p\in\mathbb S^\ell}|\tilde p_n(X_i,X_j;p)|^2]\le 64(\sup_{y\in D}|y|)^2E\Big[\|\nabla_z K((Z_i-Z_j)/h) \|^2\Big]\\
	\le 64(\sup_{y\in D}|y|)^2 h^{\ell}\int \|\nabla_z K(u) \|^2f(z_i)f(z_i+hu)dz_idu=O(h^{\ell}),
\end{multline}
where the second inequality follows from Assumption \ref{as:onY}, $\|p\|=1$ for all $p$, and the Cauchy-Schwarz inequality, while the third inequality uses the change of variables from $(z_i,z_j)$ to $(z_i,u=(z_i-z_j)/h)$ with Jacobian $h^{-\ell}$.  
By Lemma \ref{lem:euclidean}, $\mathcal G_n$ is Euclidean. 
By Theorem 3 in \cite{Sherman:1994fk} applied with $\delta_n=1$ and $\gamma_n^2=h^{\ell}$, it then follows that for some $0<\alpha<1$, which can be made arbitrarily close to 1, we have
\begin{multline}
	{n\choose 2}^{-1}\sum_{i=1}^{n-1}\sum_{j=i+1}^nq_n(X_i,X_j;p)=h^{-(\ell+1)}{n\choose 2}^{-1}\sum_{i=1}^{n-1}\sum_{j=i+1}^n\tilde q_n(X_i,X_j;p)\\
	\le O(h^{-(\ell+1)})O_p(h^{\alpha\ell/2}/n)=O_p(h^{\ell(\frac{\alpha}{2}-1)-1}/n)
\end{multline}
uniformly over $\mathbb B^\ell$. Since $\alpha$ can be made arbitrarily small and $nh^{\ell+2+\delta}\to\infty$, we have $h^{\ell(\frac{\alpha}{2}-1)-1}\le O(h^{-\frac{\ell}{2}-1-\frac{\delta}{2}})=o(\sqrt n)$. Therefore, $	{n\choose 2}^{-1}\sum_{i=1}^{n-1}\sum_{j=i+1}^nq_n(X_i,X_j;p)=o(n^{-1/2}).$
This, together with $\mathbb S^\ell\subset\mathbb B^\ell$ and \eqref{eq:uhatun}, establishes the claim of the lemma.
\end{proof}

\begin{lemma}\label{lem:G2}
Suppose Assumptions \ref{as:onY}-\ref{as:onP1}, \ref{as:onmu}, and \ref{as:onf_est}-\ref{as:onK} hold.	
	Suppose further that $nh^{\ell+2+\delta}\to\infty$ for some $\delta>0$ as $h\to0$. Then, uniformly in $p\in \mathbb S^\ell$, $\sqrt n(\bar \upsilon_n(p)-E[\bar \upsilon_n(p)])=\frac{1}{\sqrt n}\sum_{i=1}^n\psi_p(Z_i)+o_p(1)$.
\end{lemma}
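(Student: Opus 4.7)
The plan is to express $\bar\upsilon_n(p)$ as a second-order $U$-statistic, invoke the Hoeffding projection of Lemma \ref{lem:pss3.1} to replace it by its linear approximation, compute the resulting score explicitly by conditioning and integration by parts, and finally upgrade the pointwise limit to a uniform-in-$p$ equivalence using empirical process arguments.

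Since $K$ is even (Assumption \ref{as:onK}(iv)), $\nabla K$ is odd, and pairing the $(i,j)$ and $(j,i)$ summands symmetrizes the double sum into
\begin{equation*}
\bar\upsilon_n(p) = \frac{1}{n(n-1)}\sum_{i \neq j}\bigl(-h^{-(\ell+1)}\bigr)p'\nabla K\bigl((Z_i-Z_j)/h\bigr)Y_{p,i} = {n \choose 2}^{-1}\sum_{i<j}p_n(X_i,X_j;p) = U_n(p).
\end{equation*}
Lemma \ref{lem:pss3.1} then yields $\sqrt n\,(U_n(p)-\hat U_n(p)) = o_p(1)$ uniformly in $p$, so it suffices to prove $\tfrac{2}{\sqrt n}\sum_i\{r_n(X_i;p)-E[r_n(X;p)]\} = \tfrac{1}{\sqrt n}\sum_i\psi_p(X_i) + o_p(1)$ uniformly in $p \in \mathbb S^\ell$.

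Next I would compute $r_n(x;p)$ explicitly. Conditioning on $X_i$, using $E[Y_{p,j}|Z_j]=m_p(Z_j)$, the change of variables $u=(Z_j-z)/h$, and $\nabla K(-u)=-\nabla K(u)$ gives $r_n(x;p) = h^{-1}\int p'\nabla K(u)[y_p - m_p(z+hu)]f(z+hu)\,du$. Integration by parts in $u$ (justified because $K$ vanishes on $\partial\mathcal S_K$ by Assumption \ref{as:onK}(iii), the gradients of $f$ and $m_jf$ exist and are Lipschitz by Assumption \ref{as:onf_est}, and the jump set $\{p'l(z)=0\}$ of $m_p$ is $P$-null by Assumption \ref{as:onmu}(ii)) reduces this to
\begin{equation*}
r_n(x;p) = \int K(u)\,p'\nabla m_p(z+hu)\,f(z+hu)\,du - \int K(u)\,(y_p-m_p(z+hu))\,p'\nabla f(z+hu)\,du.
\end{equation*}
Its pointwise limit as $h\to 0$ is $f(z)p'\nabla m_p(z)-(y_p-m_p(z))p'\nabla f(z)$. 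Iterated expectations give $E[(Y_p-m_p(Z))\nabla f(Z)]=0$, and an additional integration by parts identifies $E[f(Z)p'\nabla m_p(Z)]$ with $\upsilon(p,\Theta_0(P))$ via Theorem \ref{thm:thetabounds} applied with $w=f$, so $2\{r_n(x;p)-E[r_n(X;p)]\}$ converges pointwise to $\psi_p(x)$.

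Finally, to upgrade this to uniform convergence, set $D_{n,p}(x)\equiv 2\{r_n(x;p)-E[r_n(X;p)]\}-\psi_p(x)$ (which has mean zero). A standard Taylor-type kernel bias expansion combined with the Lipschitz bound in Assumption \ref{as:onf_est}(i) gives $\|D_{n,p}\|_{L^2(P)}^2 = O(h^2 E[M(Z)^2])=o(1)$ uniformly in $p$, with envelope dominated by a multiple of $M(Z)$. Because $\{1\{p'l(z)>0\}:p \in \mathbb S^\ell\}$ is VC by Lemma 2.4 of \cite{Pakes:1989fk}, an argument in the spirit of Lemma \ref{lem:euclidean} shows $\{D_{n,p}:p\in\mathbb S^\ell,\,n\geq 1\}$ is Euclidean with a square-integrable envelope whose $L^2(P)$-norm vanishes. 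A standard maximal inequality for Euclidean classes (e.g., along the lines of the results in \cite{Vaart_Wellner2000aBK}) then yields $\sup_{p\in\mathbb S^\ell}|\tfrac{1}{\sqrt n}\sum_i D_{n,p}(X_i)|=o_p(1)$, completing the proof. The main obstacle is this uniform step: the indicators $1\{p'l(z)>0\}$ embedded in $m_p$ and $Y_p$ are discontinuous both in $z$ and in $p$, so establishing the Euclidean structure and the uniform $L^2$-rate of approximation requires carefully combining the VC/Euclidean machinery with the smoothness in Assumption \ref{as:onf_est}, rather than relying on a pointwise bias argument.
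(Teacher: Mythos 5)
Your proposal is correct and follows essentially the same route as the paper: symmetrize $\bar\upsilon_n(p)$ into the $U$-statistic $U_n(p)$, invoke Lemma \ref{lem:pss3.1} for the H\'ajek projection, compute $r_n(x;p)$ via conditioning, the change of variables $u=(Z_j-z)/h$, and integration by parts (this is exactly Eq.\ (3.15) of \cite{PowellStockStoker1989EJES}, which the paper cites rather than rederives), and then show the remainder $r_n(\cdot;p)-\psi_p$ is $O(h)$ in $L^2$ uniformly in $p$ using the Lipschitz bounds of Assumption \ref{as:onf_est}. The only substantive difference is the final uniformity step, where you propose a Euclidean-class maximal inequality with a vanishing envelope while the paper bounds $E[\sup_{p}|t_n(X;p)|^2]=O(h^2)$ directly; your version is if anything more careful, and the obstacle you flag (the switching of $m_p$ and $Y_p$ across the set $\{p'l(z)=0\}$, where the Lipschitz bound on $\nabla_z(m_pf)$ does not literally apply) is present in the paper's own argument as well.
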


\begin{proof}[\rm Proof of Lemma \ref{lem:G2}]
	Note that, for each $p\in\mathbb S^\ell$, one may write	\begin{multline}
\bar \upsilon_n(p)= \frac{1}{n}\sum_{i=1}^n p'\hat l_{i,h}(Z_{i})Y_{p,i}=\frac{-2}{n}\sum_{i=1}^n p'\nabla_z\hat f_{i,h}(Z_i)Y_{p,i}=\frac{-2}{n(n-1)h^{\ell+1}}\sum_{i=1}^n\sum_{j=1,j\ne i}^n	p'\nabla_z K\Big(\frac{Z_i-Z_j}{h}\Big)Y_{p,i}\\
=-{n\choose 2}^{-1}\sum_{i=1}^{n-1}\sum_{j=i+1}^n\Big(\frac{1}{h}\Big)^{\ell+1}p'\nabla_zK\Big(\frac{Z_i-Z_j}{h}\Big)(Y_{p,i}-Y_{p,j})={n \choose 2}^{-1}\sum_{i=1}^{n-1}\sum_{j=i+1}^np_n(X_i,X_j;p).\label{eq:G2_0}
	\end{multline}
 By Lemma \ref{lem:pss3.1} and $nh^{\ell+2+\delta}\to\infty$, it then follows that
	\begin{multline}
		\sqrt n(\bar \upsilon_n(p)-E[\bar \upsilon_n(p)])=\sqrt n({n \choose 2}^{-1}\sum_{i=1}^{n-1}\sum_{j=i+1}^np_n(X_i,X_j;p)-E[p_n(X_i,X_j;p)])\\
		=\frac{2}{\sqrt n}\sum_{i=1}^nr_n(X_i;p)-E[r_n(X_i;p)]+o_p(1),\label{eq:est1}
	\end{multline}
	uniformly over $\mathbb S^\ell$. By Eq. (3.15) in \cite{PowellStockStoker1989EJES}, we may write
$r_n(x;p)=\psi_p(x)+t_n(x;p)$ with
	\begin{align}
	t_n(x;p)\equiv 	\int \{\nabla_z (m_{p} f)(z+hu)- \nabla_z (m_{p} f)(z)\}K(u)du-y_{p}\int\{\nabla_z f(z+hu)- \nabla_z f(z)\}K(u)du,
	\end{align}
Hence, for the conclusion of the lemma, it suffices to show $t_n(x,\cdot)$ converges in probability to 0 uniformly in $p\in\mathbb S^\ell$.	
By Assumptions \ref{as:onmu}, \ref{as:onf_est} and $m_p(z)=1\{p'l(z)\le 0\}m_L(z)+1\{p'l(z)\le 0\}m_U(z)$, we have uniformly in $p\in\mathbb S^\ell$,
\begin{align}
|\nabla_z (m_{p} f)(z+hu)- \nabla_z (m_{p} f)(z)|\le 2 M(z)\|hu\|.	\label{eq:G2_1}
\end{align}
Further, by Assumptions \ref{as:onY} and \ref{as:onf_est},  uniformly in $p\in\mathbb S^\ell$,
\begin{align}
|	y_p||\nabla_z f(z+hu)- \nabla_z f(z)|\le \sup_{y\in \mathcal D}|y|\times M(z)\|hu\|.\label{eq:G2_2}
\end{align}
Assumption \ref{as:onf_est} and \eqref{eq:G2_1}-\eqref{eq:G2_2} then imply $E[\sup_{p\in\mathbb S^\ell}|	t_n(x;p)|^2]\le |h|^2(2+\sup_{y\in D}|y|)^2 E[|M(Z)|^2](\int \|u\|^|K(u)|du)^2=O(h^2),$ which in turn implies $\frac{1}{\sqrt n}\sum_{i=1}^nt_n(x,\cdot)-E[t_n(x,\cdot)]$ converges in probability to 0 uniformly in $p\in\mathbb S^\ell$. This establishes the claim of the lemma. 
\end{proof}

\begin{lemma}\label{lem:G3}
Suppose Assumptions \ref{as:onf_est}-\ref{as:onK} hold. Suppose that $nh^{2J}\to 0$. Then, uniformly in $p\in \mathbb S^\ell$, $E[\bar \upsilon_n(p)]-\upsilon(p,\Theta_0(P_0))=o(n^{-1/2}).$
\end{lemma}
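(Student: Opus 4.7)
The plan is to reduce the bias $E[\bar\upsilon_n(p)]-\upsilon(p,\Theta_0(P_0))$ to a classical kernel-bias calculation and then exploit the higher-order kernel property. First, by the i.i.d.\ assumption and the definition of the leave-one-out estimator $\hat l_{1,h}(Z_1)=-2\nabla_z\hat f_{1,h}(Z_1)$, I would write
\begin{equation*}
E[\bar\upsilon_n(p)]=-\frac{2}{h^{\ell+1}}E\!\left[p'(\nabla K)\!\left(\tfrac{Z_1-Z_2}{h}\right)Y_{p,1}\right]=-\frac{2}{h^{\ell+1}}E\!\left[p'(\nabla K)\!\left(\tfrac{Z_1-Z_2}{h}\right)m_p(Z_1)\right],
\end{equation*}
where the second equality uses $E[Y_{p,1}\mid Z_1]=m_p(Z_1)$. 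After the change of variables $u=(z_1-z_2)/h$, integration by parts in $u$ is justified because $K$ vanishes on $\partial\mathcal S_K$ (Assumption~\ref{as:onK}(iii)). Combining the resulting identity $\int(\nabla K)(u)f(z-hu)\,du=h\int K(u)\nabla f(z-hu)\,du$ with the symmetry $K(-u)=K(u)$ (Assumption~\ref{as:onK}(iv)) yields
\begin{equation*}
E[\bar\upsilon_n(p)]=-2\int\!\!\int K(u)\,m_p(z)f(z)\,p'(\nabla f)(z+hu)\,du\,dz.
\end{equation*}

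Evaluating the right-hand side at $h=0$ gives $-2\int m_p(z)f(z)p'\nabla f(z)\,dz=\int m_p(z)p'l(z)f(z)\,dz$, which is exactly $\upsilon(p,\Theta_0(P_0))$ by Theorem~\ref{thm:thetabounds}, since $l(z)=-2\nabla f(z)$ for the density weight. Hence the bias equals $-2\int m_p(z)f(z)\,p'\int K(u)[\nabla f(z+hu)-\nabla f(z)]\,du\,dz$. I would then Taylor-expand $\nabla f(z+hu)$ about $z$ to order $J-1$, using the existence of partial derivatives of $f$ up to order $J+1$ from Assumption~\ref{as:onf_est}(ii). By the higher-order kernel property (Assumption~\ref{as:onK}(v)), every monomial of total degree $1,\ldots,J-1$ in $u$ integrates against $K$ to zero, so only the integral-form remainder survives:
\begin{equation*}
\int K(u)\bigl[\nabla f(z+hu)-\nabla f(z)\bigr]du=\frac{h^{J}}{(J-1)!}\int_{0}^{1}(1-t)^{J-1}\!\int K(u)\,(D^{J}\nabla f)(z+thu)[u]^{J}\,du\,dt.
\end{equation*}

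To bound the resulting triple integral uniformly in $p$, I would use $|m_p(z)|\le\sup_{y\in D}|y|<\infty$ (which holds uniformly in $p$ by Assumption~\ref{as:onP1}(i)) and $\|p\|=1$, exchange the order of integration by Fubini, and substitute $\tilde z=z+thu$ to convert the $z$-integral into an expression of the form $E[Y_{j}\,\partial^{\alpha}f(Z)]$ with $|\alpha|=J+1$, which is finite by Assumption~\ref{as:onf_est}(ii); the $u$-integral is controlled by the $J$-th moment of $K$, finite by Assumption~\ref{as:onK}(vi). This delivers $|E[\bar\upsilon_n(p)]-\upsilon(p,\Theta_0(P_0))|\le Ch^{J}$ with $C$ independent of $p$, and the condition $nh^{2J}\to0$ gives $\sqrt n\,h^{J}=o(1)$, completing the proof.

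The main obstacle is the last step: performing the Fubini swap and the $\tilde z$-substitution in a way that reduces all $p$-dependence to the bounded factor $m_p$, so that the $O(h^{J})$ bound is genuinely uniform in $p\in\mathbb S^\ell$. The discontinuity of $m_p$ in $p$ (through the sign of $p'l$) is harmless because only the pointwise bound $|m_p|\le\sup_{y\in D}|y|$ enters; the integrability side is handled entirely by Assumption~\ref{as:onf_est}(ii), whose formulation in terms of $E[Y_j\,\partial^{k}f(Z)]$ is precisely what the post-substitution integrals require.
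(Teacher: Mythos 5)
Your proposal is correct and follows essentially the same route as the paper: both reduce $E[\bar\upsilon_n(p)]$ via iterated expectations ($E[Y_{p,1}\mid Z_1]=m_p(Z_1)$), a change of variables, and integration by parts to the kernel-bias integral $-2\int\!\!\int K(u)m_p(z)f(z)p'\nabla f(z+hu)\,du\,dz$, then Taylor-expand $\nabla f(z+hu)$, invoke the order-$J$ kernel property to annihilate all terms below $h^J$, and conclude from $nh^{2J}\to0$. The only (cosmetic) differences are that you use the integral-form remainder and argue uniformity in $p$ directly from the pointwise bound $|m_p|\le\sup_{y\in D}|y|$, whereas the paper uses Young's form of Taylor's theorem and deduces uniformity from continuity of the expansion coefficients $b_k(p)$ on the compact sphere — your handling of the remainder's uniformity is, if anything, slightly more explicit.
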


\begin{proof}[\rm Proof of Lemma \ref{lem:G3}]
	The proof is based on the proof of Theorem 3.2 in \cite{PowellStockStoker1989EJES}. Hence, we briefly sketch the argument.
	By \eqref{eq:G2_0}, the law of iterated expectations, and arguing as in (3.19) in \cite{PowellStockStoker1989EJES}, we obtain
	\begin{multline}
E[\bar\upsilon_n(p)]=-2E\Big[\Big(\frac{1}{h}\Big)^{\ell+1}	p'\nabla_z K\Big(\frac{Z_i-Z_j}{h}\Big)Y_{p,i}\Big]\\
=\frac{-2}{h}\int\int p'\nabla_z K(u)m_p(z)f(z)f(z+hu)dzdu=\frac{-2}{h}\int\int  K(u)m_p(z)f(z)p'\nabla_zf(z+hu)dzdu,\label{eq:G3_1}
	\end{multline}
	where the second equality follows from change of variables.
By Assumptions  \ref{as:onf_est}, \ref{as:onK}, and Young's version of Taylor's theorem, for each $p\in\mathbb S^\ell$, we then obtain the expansion:
\begin{align}
	\sqrt n(E[\bar\upsilon_n(p)]-\upsilon(p,\Theta_0(P_0)))=b_1(p)\sqrt n h+b_2(p)\sqrt nh^2+\cdots+b_{J-1}(p)\sqrt n h^{J-1}+O(\sqrt n h^{J}),\label{eq:G3_2}
\end{align}
where $b_k$ is given by
\begin{align}
b_k(p)=	\frac{-2}{k!}\sum_{j_1,\cdots,j_k}^k\int u^{j_1}\cdots u^{j_k}K(u)du\times \int m_p(z)\sum_{i=1}^\ell p^{(i)}\frac{\partial^{k+1}f(z)}{\partial z_{j_1}\cdots\partial z_{j_k}\partial z_i}f(z)dz,~p\in\mathbb S^\ell,~k=1,\cdots,J,\label{eq:G3_3}
\end{align}
which shows that the map $p\mapsto b_k(p)$ is continuous on $\mathbb S^\ell$ for $k=1,2,\cdots,J.$ This implies that the expansion in \eqref{eq:G3_2} is valid uniformly over the compact set $\mathbb S^\ell$. By Assumption \ref{as:onK} (v) and \eqref{eq:G3_3}, $b_k(p)=0$ for all $k\le J$ but $b_k\ne 0$ for $k=J$. By the hypothesis that $nh^{2J}\to 0$, we obtain $\sqrt n(E[\bar\upsilon_n(p)]-\upsilon(p,\Theta_0(P_0)))=O(\sqrt n h^{J})=o(1)$. This establishes the claim of the lemma.
\end{proof}

\begin{lemma}\label{lem:donsker}
Suppose Assumptions \ref{as:onY}-\ref{as:onP1}, and \ref{as:onP2} hold. Then, $\mathcal F\equiv \{\psi_p:\mathcal X\to\mathbb R:\psi_p(x)=w(z)p'\nabla_zm_{p}(z)-\upsilon(p,\Theta_0(P))+p'l(z)\zeta_{p}(x)\}$ is Donsker in $\mathcal C(\mathbb S^\ell).$
\end{lemma}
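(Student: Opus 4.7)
The plan is to decompose $\psi_p$ into a finite sum of products of simpler functions indexed by $p\in\mathbb S^\ell$, show each factor lies in a uniformly bounded $P$-Donsker class, and then combine using standard permanence properties of Donsker classes (Example~2.10.8 in \cite{Vaart_Wellner2000aBK}). Writing $\nabla_z m_p(z)=\Gamma(\nabla_z m_L(z),\nabla_z m_U(z),p'l(z))$ and $\zeta_p(x)=\Gamma(r_L^{-1}(z)q(y_L-m_L(z)),r_U^{-1}(z)q(y_U-m_U(z)),p'l(z))$, we expand $\psi_p(x)$ as
\begin{align*}
\psi_p(x)&=w(z)\,1\{p'l(z)\le 0\}\,p'\nabla_z m_L(z)+w(z)\,1\{p'l(z)>0\}\,p'\nabla_z m_U(z)-\upsilon(p,\Theta_0(P))\\
&\quad+p'l(z)\,1\{p'l(z)\le 0\}\,r_L^{-1}(z)q(y_L-m_L(z))+p'l(z)\,1\{p'l(z)>0\}\,r_U^{-1}(z)q(y_U-m_U(z)).
\end{align*}
All five summands are uniformly bounded: by Assumption~\ref{as:onw} and Lemma~\ref{lem:onmLmU} (applied at $\eta=0$), $w(\cdot)$ and $\nabla_z m_j(\cdot)$ are bounded on $\mathcal Z$; by Assumption~\ref{as:onP2}(i), $r_j^{-1}$ is bounded; by Assumption~\ref{as:onY}(ii) together with Lemma~\ref{lem:support}, $q(y_j-m_j(z))$ is bounded on the support; and $z\mapsto \|l(z)\|$ is bounded and $L^2(P)$ integrable by Assumptions~\ref{as:onw} and~\ref{as:onP1}(ii). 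Finally, $p\mapsto\upsilon(p,\Theta_0(P))$ is continuous on the compact set $\mathbb S^\ell$ (differentiability was shown in the proof of Theorem~\ref{thm:thetabounds}) and hence the constant-in-$x$ class $\{-\upsilon(p,\Theta_0(P)):p\in\mathbb S^\ell\}$ is trivially $P$-Donsker.

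Next I would verify the Donsker property for each elementary factor. The class $\{x\mapsto p'g(z):p\in\mathbb S^\ell\}$ for any bounded measurable $g:\mathcal Z\to\mathbb R^\ell$ is a finite-dimensional vector space of functions, hence a VC-subgraph class with square-integrable envelope $\|g(\cdot)\|$ by Lemma~2.6.15 of \cite{Vaart_Wellner2000aBK}; applying this with $g=\nabla_z m_L$, $g=\nabla_z m_U$, and $g=l$ gives three Donsker classes. The indicator class $\{x\mapsto 1\{p'l(z)>0\}:p\in\mathbb S^\ell\}$ is the pullback through $z\mapsto l(z)$ of the collection of open half-spaces in $\mathbb R^\ell$ through the origin, which has finite VC dimension, hence is a $P$-Donsker class with constant envelope $1$ (and similarly for $1\{p'l(z)\le 0\}$). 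The remaining factors $w(z)$, $r_j^{-1}(z)q(y_j-m_j(z))$ are fixed bounded functions not depending on $p$, so they may be absorbed as constant envelopes. By the product rule for uniformly bounded Donsker classes (Example~2.10.8 in \cite{Vaart_Wellner2000aBK}), each of the five summands of $\psi_p$ defines a $P$-Donsker class; by the sum rule (Example~2.10.7), the sum class $\mathcal F$ is $P$-Donsker.

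To upgrade the Donsker statement to convergence in $\mathcal C(\mathbb S^\ell)$, it suffices to check that $p\mapsto\psi_p$ is continuous from $\mathbb S^\ell$ into $L^2(P)$: then the sample paths of the limiting Gaussian process are almost surely continuous on $(\mathbb S^\ell,\rho_P)$, which coincides topologically with the Euclidean metric on the compact sphere. For any sequence $p_n\to p$, the arguments in the proofs of Lemmas~\ref{lem:pathdiff_ptws} and~\ref{lem:cts} (applied at $\eta=0$) together with Assumption~\ref{as:onmu}(ii) imply $P(p'l(Z)=0)=0$, so $1\{p_n'l(Z)>0\}\to 1\{p'l(Z)>0\}$ in probability and in $L^2(P)$ by dominated convergence; combined with the uniform boundedness of all the other factors and the continuity of $p\mapsto\upsilon(p,\Theta_0(P))$, this yields $\|\psi_{p_n}-\psi_p\|_{L^2(P)}\to 0$.

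The main obstacle I anticipate is verifying the product-of-Donsker-classes step cleanly, since one of the factors is an indicator with discontinuous dependence on $p$; the saving grace is that both the indicator class and its complementary factors are uniformly bounded, so the product permanence result of Example~2.10.8 in \cite{Vaart_Wellner2000aBK} applies directly without a smoothness hypothesis on $p\mapsto 1\{p'l(z)>0\}$. The continuity of the $L^2(P)$-path, rather than the Donsker property itself, is where Assumption~\ref{as:onmu}(ii) is indispensable.
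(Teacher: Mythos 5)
Your proposal is correct and follows essentially the same route as the paper: both decompose $\psi_p$ into sums and products of the linear classes $\{p'g(\cdot)\}$, the half-space indicator class $\{1\{p'l(z)>0\}\}$, and the constant-in-$x$ class $\{\upsilon(p,\Theta_0(P))\}$, identify each as VC-subgraph with a square-integrable envelope, and conclude by permanence properties from \cite{Vaart_Wellner2000aBK}. The only differences are cosmetic — the paper groups terms as $1\{p'l(z)>0\}(U-L)+L$ so it needs a single indicator class and invokes Lemma 2.6.18 with Theorems 2.6.7 and 2.5.1 rather than Examples 2.10.7--2.10.8, and your added check of $L^2(P)$-continuity of $p\mapsto\psi_p$ is a harmless (indeed clarifying) supplement the paper leaves implicit.
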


\begin{proof}[\rm Proof of Lemma \ref{lem:donsker}]
Let $\mathcal F_g\equiv\{f:\mathcal X\to\mathbb R:f(x)=p'g(x),p\in\mathbb S^\ell\}$, where $g:\mathcal X\to\mathbb R^\ell$ is a known function. Then by $\mathbb S^\ell$ being finite dimensional and Lemma 2.6.15 in \cite{Vaart_Wellner2000aBK}, $\mathcal F_g$ is a VC-subgraph class of index $\ell+2$ with an envelope function $F(x)\equiv\|g(x)\|$. Define
\begin{align}
g_1(x)&\equiv w(z)(\nabla_zm_U(z)-\nabla_z m_L(z)),~~g_2(x)\equiv w(z)\nabla_z m_L(z),\\
g_3(x)&\equiv l(z)\{r^{-1}_U(z)q(y_U-m_U(z))-r^{-1}_L(z)q(y_L-m_L(z))\},~~
g_4(x)\equiv l(z)r^{-1}_L(z)q(y_L-m_L(z)),~~g_5(x)\equiv l(z).
\end{align}
Then $\mathcal F_{g_j},j=1,\cdots, 5$ are VC-subgraph classes. Further, let $\mathcal F_{\upsilon}\equiv\{f:\mathcal X\to\mathbb R:f(x)=\upsilon(p,\Theta_0(P)),p\in\mathbb S^\ell\}$. This is also finite dimensional. Hence, $\mathcal F_{\upsilon}$ is a VC-subgraph class. Finally, let $\mathcal F_{\phi}\equiv\{f:\mathcal X\to\mathbb R:1\{p'l(z)>0\},p\in\mathbb S^\ell\}$. Then, $F_{\phi}=\phi\circ\mathcal F_{g_5},$ where $\phi:\mathbb R\to\mathbb R$ is the monotone map $\phi(w)=1\{w>0\}.$ By Lemma 2.6.18 in \cite{Vaart_Wellner2000aBK}, $\mathcal F_{\phi}$ is also a VC-subgraph class.

Note that $\psi_p$ can be written as
\begin{multline}
	\psi_p(x)=w(z)p'\{1\{p'l(z)> 0\}(\nabla_zm_U(z)-\nabla_z m_L(z))+\nabla_z m_L(z)\}-\upsilon(p,\Theta_0(P))\\
	+p'l(z)\{1\{p'l(z)> 0\}\{r^{-1}_U(z)q(y_U-m_U(z))-r^{-1}_L(z)q(y_L-m_L(z))\}+r^{-1}_L(z)q(y_L-m_L(z))\}.
\end{multline}	
Therefore, $\mathcal F=\mathcal F_{g_1}\cdot\mathcal F_\phi+\mathcal F_{g_2}+(-\mathcal F_\upsilon)+\mathcal F_{g_3}\cdot\mathcal F_\phi+\mathcal F_{g_4}$, which is again a VC-subgraph class with some index $V(\mathcal F)$ by Lemma 2.6.18 in \cite{Vaart_Wellner2000aBK}. By Assumptions \ref{as:onY}-\ref{as:onP1} and \ref{as:onP2}, we may take $F(x)\equiv \sup_{p\in\mathbb S^\ell}\|w(z)p'\nabla_{z}m_{p,\eta}(z)\|_{L^\infty_\mu}+ \|l(z)\|\times \bar\epsilon^{-1}\times\sup_{u\in D'} |q(u)|$ as an envelope function such that  $E[F(x)^2]<\infty.$
Then, by Theorems 2.6.7 and 2.5.1 in \cite{Vaart_Wellner2000aBK}, $\mathcal F$ is a Donsker class. This establishes the claim of the lemma.
\end{proof}

\begin{proof}[\rm Proof of Theorem \ref{thm:estimator}]
	For each $p\in\mathbb S^\ell$, we have the following decomposition:
	\begin{align} 
\sqrt n(\hat \upsilon_n(p)-\upsilon(p,\Theta_0(P_0)))&=\sqrt n(\hat \upsilon_n(p)-\bar \upsilon_n(p))+\sqrt n(\bar \upsilon_n(p)-E[\bar \upsilon_n(p)])+\sqrt n(E[\bar \upsilon_n(p)]-\upsilon(p,\Theta_0(P_0)))\\
&\equiv G_{1n}(p)+G_{2n}(p)+G_{3n}(p).\label{eq:effinfexp}
	\end{align} 
By Lemmas \ref{lem:G1}-\ref{lem:G3},   uniformly in $p\in\mathbb S^\ell$, $G_{1n}(p)=G_{3n}(p)=o_p(1)$, and $G_{2n}(p)=	\frac{1}{\sqrt n}\sum_{i=1}^n\psi_p(Z_i)+o_p(1)$. This establishes the second claim of the Theorem. 
By Theorem \ref{thm:InvCov}, $\psi_p$ is the efficient influence function, and hence regularity of $\{\hat\upsilon_n(\cdot)\}$ follows from Lemma \ref{lem:donsker} and Theorem 18.1 in \cite{Kosorok2008}, which establishes the first claim. 
The stated convergence in distribution is then immediate from \eqref{eq:effinfexp} and Lemma \ref{lem:donsker}, while the limiting process having the efficient covariance kernel is a direct result of the characterization of $I(p_1,p_2)$ obtained in Theorem \ref{thm:InvCov}, which establishes the third claim.
\end{proof}
}

\vspace{0.1in}
\begin{center}
{\sc {Appendix E}}: Figures and Tables
\end{center}

\renewcommand{\thedefinition}{E.\arabic{definition}}
\renewcommand{\theequation}{E.\arabic{equation}}
\renewcommand{\thelemma}{E.\arabic{lemma}}
\renewcommand{\thecorollary}{E.\arabic{corollary}}
\renewcommand{\thetheorem}{E.\arabic{theorem}}
\setcounter{lemma}{0}
\setcounter{theorem}{0}
\setcounter{corollary}{0}
\setcounter{equation}{0}
\setcounter{remark}{0}

\begin{figure}[htbp]
\begin{center}
\includegraphics[scale=.45]{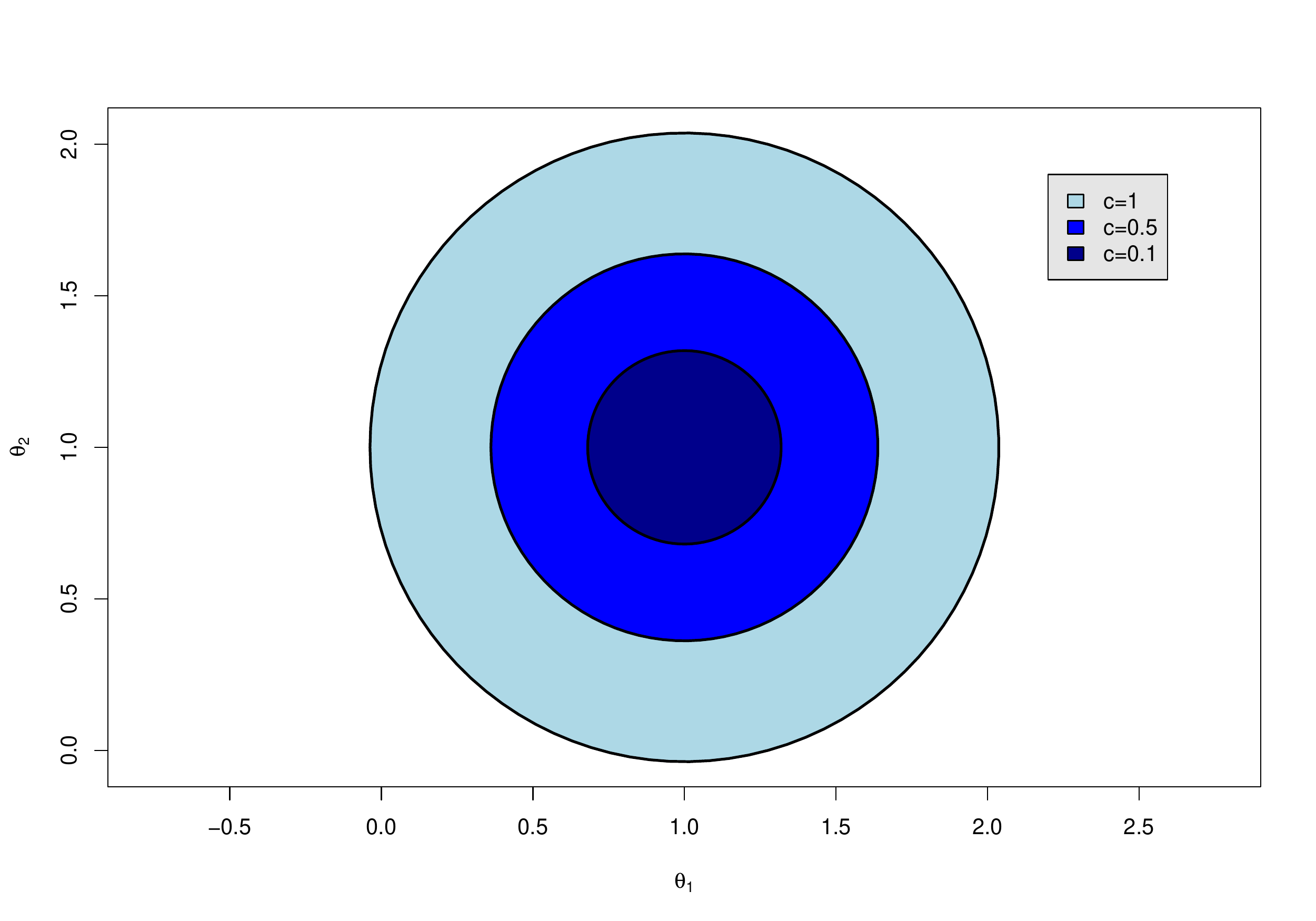}
\end{center}
\caption{Identified sets for the density weighted average derivatives}
\label{fig:identifiedset}
\end{figure}

\begin{table}[htbp]\small
\caption{Risk of $\hat \upsilon^{IV}_n$ (Gaussian kernel)}
\begin{center}
\begin{tabular}{lrrrrlrrrlrrr}
\hline
\hline
 & \multicolumn{1}{l}{} & \multicolumn{ 3}{c}{$c$=0.1} &  & \multicolumn{ 3}{c}{$c$=0.5} &  & \multicolumn{ 3}{c}{$c$=1} \\ 
Sample Size & \multicolumn{1}{l}{$h$} & \multicolumn{1}{l}{$R_H$} & \multicolumn{1}{l}{$R_{IH}$} & \multicolumn{1}{l}{$R_{OH}$} &  & \multicolumn{1}{l}{$R_H$} & \multicolumn{1}{l}{$R_{IH}$} & \multicolumn{1}{l}{$R_{OH}$} &  & \multicolumn{1}{l}{$R_H$} & \multicolumn{1}{l}{$R_{IH}$} & \multicolumn{1}{l}{$R_{OH}$} \\ 
\hline
$n$=1000 & \multicolumn{1}{l}{} & \multicolumn{1}{l}{} & \multicolumn{1}{l}{} & \multicolumn{1}{l}{} &  & \multicolumn{1}{l}{} & \multicolumn{1}{l}{} & \multicolumn{1}{l}{} &  & \multicolumn{1}{l}{} & \multicolumn{1}{l}{} & \multicolumn{1}{l}{} \\ 
 & 0.4 & 0.0608 & 0.0477 & 0.0600 &  & 0.0834 & 0.0709 & 0.0673 &  & 0.1229 & 0.1037 & 0.0801 \\ 
 & 0.5 & 0.0588 & 0.0468 & 0.0578 &  & 0.0785 & 0.0749 & 0.0485 &  & 0.1212 & 0.1185 & 0.0437 \\ 
 & 0.6 & 0.0572 & 0.0452 & 0.0564 &  & 0.0809 & 0.0804 & 0.0351 &  & 0.1305 & 0.1304 & 0.0229 \\ 
 & 0.7 & 0.0567 & 0.0416 & 0.0563 &  & 0.0844 & 0.0844 & 0.0263 &  & 0.1416 & 0.1416 & 0.0086 \\ 
 & 0.8 & 0.0555 & 0.0386 & 0.0553 &  & 0.0882 & 0.0882 & 0.0195 &  & 0.1556 & 0.1556 & 0.0026 \\ 
 & \multicolumn{1}{l}{} & \multicolumn{1}{l}{} & \multicolumn{1}{l}{} & \multicolumn{1}{l}{} &  & \multicolumn{1}{l}{} & \multicolumn{1}{l}{} & \multicolumn{1}{l}{} &  & \multicolumn{1}{l}{} & \multicolumn{1}{l}{} & \multicolumn{1}{l}{} \\ 
$n$=500 & \multicolumn{1}{l}{} & \multicolumn{1}{l}{} & \multicolumn{1}{l}{} & \multicolumn{1}{l}{} &  & \multicolumn{1}{l}{} & \multicolumn{1}{l}{} & \multicolumn{1}{l}{} &  & \multicolumn{1}{l}{} & \multicolumn{1}{l}{} & \multicolumn{1}{l}{} \\ 
 & 0.4 & 0.0929 & 0.0703 & 0.0919 &  & 0.1185 & 0.0877 & 0.1072 &  & 0.1731 & 0.1203 & 0.1437 \\ 
 & 0.5 & 0.0836 & 0.0684 & 0.0817 &  & 0.1091 & 0.0979 & 0.0839 &  & 0.1555 & 0.1414 & 0.0873 \\ 
 & 0.6 & 0.0799 & 0.0646 & 0.0786 &  & 0.1038 & 0.0999 & 0.0640 &  & 0.1555 & 0.1520 & 0.0530 \\ 
 & 0.7 & 0.0774 & 0.0607 & 0.0762 &  & 0.1060 & 0.1051 & 0.0512 &  & 0.1679 & 0.1677 & 0.0297 \\ 
 & 0.8 & 0.0775 & 0.0592 & 0.0769 &  & 0.1098 & 0.1096 & 0.0410 &  & 0.1785 & 0.1785 & 0.0173 \\ 
 & \multicolumn{1}{l}{} & \multicolumn{1}{l}{} & \multicolumn{1}{l}{} & \multicolumn{1}{l}{} &  & \multicolumn{1}{l}{} & \multicolumn{1}{l}{} & \multicolumn{1}{l}{} &  & \multicolumn{1}{l}{} & \multicolumn{1}{l}{} & \multicolumn{1}{l}{} \\ 
$n$=250 & \multicolumn{1}{l}{} & \multicolumn{1}{l}{} & \multicolumn{1}{l}{} & \multicolumn{1}{l}{} &  & \multicolumn{1}{l}{} & \multicolumn{1}{l}{} & \multicolumn{1}{l}{} &  & \multicolumn{1}{l}{} & \multicolumn{1}{l}{} & \multicolumn{1}{l}{} \\ 
 & 0.4 & 0.1357 & 0.0960 & 0.1349 &  & 0.1820 & 0.1061 & 0.1770 &  & 0.2480 & 0.1256 & 0.2339 \\ 
 & 0.5 & 0.1189 & 0.0941 & 0.1169 &  & 0.1517 & 0.1231 & 0.1289 &  & 0.2013 & 0.1638 & 0.1446 \\ 
 & 0.6 & 0.1133 & 0.0914 & 0.1112 &  & 0.1413 & 0.1299 & 0.1053 &  & 0.1954 & 0.1818 & 0.1084 \\ 
 & 0.7 & 0.1121 & 0.0910 & 0.1098 &  & 0.1365 & 0.1317 & 0.0890 &  & 0.1974 & 0.1949 & 0.0725 \\ 
 & 0.8 & 0.1086 & 0.0864 & 0.1068 &  & 0.1374 & 0.1360 & 0.0737 &  & 0.2069 & 0.2061 & 0.0500 \\ 
\hline
\end{tabular}
\end{center}
\label{tab:mc_spec3}
\end{table}

\begin{table}[htbp]\small
\caption{Risk of $\hat \upsilon^{IV}_n$ (Higher-order kernel)}
\begin{center}
\begin{tabular}{lrrrrlrrrlrrr}
\hline
\hline
 & \multicolumn{1}{l}{} & \multicolumn{ 3}{c}{$c$=0.1} &  & \multicolumn{ 3}{c}{$c$=0.5} &  & \multicolumn{ 3}{c}{$c$=1} \\ 
Sample Size & \multicolumn{1}{l}{$h$} & \multicolumn{1}{l}{$R_H$} & \multicolumn{1}{l}{$R_{IH}$} & \multicolumn{1}{l}{$R_{OH}$} &  & \multicolumn{1}{l}{$R_H$} & \multicolumn{1}{l}{$R_{IH}$} & \multicolumn{1}{l}{$R_{OH}$} &  & \multicolumn{1}{l}{$R_H$} & \multicolumn{1}{l}{$R_{IH}$} & \multicolumn{1}{l}{$R_{OH}$} \\ 
\hline
$n$=1000 & \multicolumn{1}{l}{} & \multicolumn{1}{l}{} & \multicolumn{1}{l}{} & \multicolumn{1}{l}{} &  & \multicolumn{1}{l}{} & \multicolumn{1}{l}{} & \multicolumn{1}{l}{} &  & \multicolumn{1}{l}{} & \multicolumn{1}{l}{} & \multicolumn{1}{l}{} \\ 
 & 0.5 & 0.0722 & 0.0549 & 0.0714 &  & 0.1267 & 0.0461 & 0.1256 &  & 0.2038 & 0.0494 & 0.2017 \\ 
 & 0.6 & 0.0654 & 0.0551 & 0.0637 &  & 0.0912 & 0.0532 & 0.0872 &  & 0.1384 & 0.0636 & 0.1312 \\ 
 & 0.7 & 0.0600 & 0.0511 & 0.0583 &  & 0.0760 & 0.0631 & 0.0645 &  & 0.1020 & 0.0835 & 0.0745 \\ 
 & 0.8 & 0.0564 & 0.0470 & 0.0553 &  & 0.0759 & 0.0741 & 0.0444 &  & 0.1093 & 0.1085 & 0.0370 \\ 
 & 0.9 & 0.0565 & 0.0446 & 0.0559 &  & 0.0802 & 0.0801 & 0.0313 &  & 0.1302 & 0.1302 & 0.0134 \\ 
 & \multicolumn{1}{l}{} & \multicolumn{1}{l}{} & \multicolumn{1}{l}{} & \multicolumn{1}{l}{} &  & \multicolumn{1}{l}{} & \multicolumn{1}{l}{} & \multicolumn{1}{l}{} &  & \multicolumn{1}{l}{} & \multicolumn{1}{l}{} & \multicolumn{1}{l}{} \\ 
$n$=500 & \multicolumn{1}{l}{} & \multicolumn{1}{l}{} & \multicolumn{1}{l}{} & \multicolumn{1}{l}{} &  & \multicolumn{1}{l}{} & \multicolumn{1}{l}{} & \multicolumn{1}{l}{} &  & \multicolumn{1}{l}{} & \multicolumn{1}{l}{} & \multicolumn{1}{l}{} \\ 
 & 0.5 & 0.1104 & 0.0744 & 0.1101 &  & 0.1867 & 0.0587 & 0.1861 &  & 0.2887 & 0.0604 & 0.2870 \\ 
 & 0.6 & 0.0947 & 0.0753 & 0.0930 &  & 0.1308 & 0.0745 & 0.1267 &  & 0.1993 & 0.0857 & 0.1914 \\ 
 & 0.7 & 0.0869 & 0.0737 & 0.0846 &  & 0.1080 & 0.0843 & 0.0970 &  & 0.1453 & 0.1085 & 0.1184 \\ 
 & 0.8 & 0.0802 & 0.0668 & 0.0783 &  & 0.1019 & 0.0958 & 0.0747 &  & 0.1373 & 0.1308 & 0.0683 \\ 
 & 0.9 & 0.0772 & 0.0635 & 0.0758 &  & 0.1042 & 0.1034 & 0.0564 &  & 0.1513 & 0.1508 & 0.0372 \\ 
 & \multicolumn{1}{l}{} & \multicolumn{1}{l}{} & \multicolumn{1}{l}{} & \multicolumn{1}{l}{} &  & \multicolumn{1}{l}{} & \multicolumn{1}{l}{} & \multicolumn{1}{l}{} &  & \multicolumn{1}{l}{} & \multicolumn{1}{l}{} & \multicolumn{1}{l}{} \\ 
$n$=250 & \multicolumn{1}{l}{} & \multicolumn{1}{l}{} & \multicolumn{1}{l}{} & \multicolumn{1}{l}{} &  & \multicolumn{1}{l}{} & \multicolumn{1}{l}{} & \multicolumn{1}{l}{} &  & \multicolumn{1}{l}{} & \multicolumn{1}{l}{} & \multicolumn{1}{l}{} \\ 
 & 0.5 & 0.1788 & 0.1038 & 0.1787 &  & 0.2832 & 0.0630 & 0.2831 &  & 0.4316 & 0.0511 & 0.4313 \\ 
 & 0.6 & 0.1374 & 0.1034 & 0.1359 &  & 0.1959 & 0.0979 & 0.1925 &  & 0.2802 & 0.1061 & 0.2716 \\ 
 & 0.7 & 0.1212 & 0.1001 & 0.1187 &  & 0.1571 & 0.1147 & 0.1460 &  & 0.2063 & 0.1401 & 0.1773 \\ 
 & 0.8 & 0.1143 & 0.0948 & 0.1118 &  & 0.1385 & 0.1231 & 0.1133 &  & 0.1811 & 0.1620 & 0.1201 \\ 
 & 0.9 & 0.1107 & 0.0910 & 0.1085 &  & 0.1342 & 0.1292 & 0.0903 &  & 0.1865 & 0.1820 & 0.0819 \\ 
\hline
\end{tabular}
\end{center}
\label{tab:mc_spec4}
\end{table}

\end{document}